\theoremstyle{definition}
\newtheorem{theorem}{Theorem}[section]
\newtheorem{question}[theorem]{Question}
\newtheorem{corollary}[theorem]{Corollary}
\newtheorem{lemma}[theorem]{Lemma}
\newtheorem{proposition}[theorem]{Proposition}
\newtheorem*{theorem*}{Theorem}
\theoremstyle{definition}
\newtheorem{definition}[theorem]{Definition}
\newtheorem{example}[theorem]{Example}
\newtheorem{remark}[theorem]{Remark}
\numberwithin{equation}{subsection}
\newcommand{\m}{\mathfrak{m}}
\newcommand{\n}{\mathfrak{n}}
\newcommand{\NN}{\mathbb{N}}
\newcommand{\QQ}{\mathbb{Q}}
\newcommand{\ee}{\operatorname{e}}
\newcommand{\pd}{\operatorname{pd}}
\newcommand{\Spec}{\operatorname{Spec}}
\newcommand{\Depth}{\operatorname{depth}}
\newcommand{\Hom}{\operatorname{Hom}}
\newcommand{\Ext}{\operatorname{Ext}}
\newcommand{\Tor}{\operatorname{Tor}}
\newcommand{\Supp}{\operatorname{Supp}}
\newcommand{\Ker}{\operatorname{Ker}}
\newcommand{\CoKer}{\operatorname{Coker}}
\newcommand{\Ann}{\operatorname{Ann}}
\newcommand{\Ass}{\operatorname{Ass}}	
\newcommand{\IM}{\operatorname{Im}}
\newcommand{\rk}{\operatorname{rk}}
\newcommand{\ls}{\leqslant}
\newcommand{\gs}{\geqslant}
\newcommand{\ds}{\displaystyle}
\newcommand{\eee}[1]{\underline{}^e \hspace{-0.05cm}{#1}}
\newcommand{\eeep}[1]{\underline{}^{e+e'} \hspace{-0.05cm}{#1}}
\newcommand{\p}{\mathfrak{p}}
\newcommand{\q}{\mathfrak{q}}
\newcommand{\MIN}{\operatorname{Min}}
\newcommand{\ov}[1]{\overline{#1}}
\newcommand{\ann}{\operatorname{ann}}
\newcommand{\ps}[1]{\llbracket {#1} \rrbracket}
\begin{document}
\newcommand{\tens}{\otimes}
\newcommand{\hhtest}[1]{\tau ( #1 )}
\renewcommand{\hom}[3]{\operatorname{Hom}_{#1} ( #2, #3 )}

\title{Frobenius Betti numbers and modules of finite projective dimension}
\author{Alessandro De Stefani}

\author{Craig Huneke}

\author{Luis N\'u\~nez-Betancourt}

\begin{abstract}
Let $(R,\m,K)$ be a local ring, and  let $M$ be an $R$-module of finite length. We study asymptotic invariants, $\beta^F_i(M,R),$ defined by twisting with Frobenius the free resolution of $M$.
This family of invariants includes the Hilbert-Kunz multiplicity ($e_{HK}(\m,R)=\beta^F_0(K,R)$). 
We discuss several  properties of these numbers that resemble the behavior of the Hilbert-Kunz multiplicity.  
Furthermore, we study when the vanishing of  $\beta^F_i(M,R)$ implies that $M$ has finite projective dimension. In particular, we give a complete characterization of the vanishing of $\beta^F_i(M,R)$ for one-dimensional rings. 
As a consequence of our methods, we give conditions for the non-existence of syzygies of finite length.
\end{abstract}

\maketitle

{\hypersetup{linkcolor=black}

\tableofcontents
}

\section{Introduction}

Let $(R,\m,K)$ denote an $F$-finite local ring of dimension $d$ and characteristic $p>0,$ and let  $\alpha=\log_p[K:K^p].$ Given an $R$-module $M$ and an integer $e\gs 0$, $^e M$ denotes the $R$-module structure on $M$ given by $r *m=r^{p^e}m$ for every $m\in \eee M$ and $r \in R$. In addition, $\lambda_R(M)$, or simply $\lambda(M)$ when the ring is clear from the context, denotes the length of $M$ as an $R$-module.

Let $q=p^e$ be a power of $p$. For an ideal $I\subseteq R$, let $I^{[q]} = (i^q \mid i \in I)$ be the ideal generated by the $q$-th powers of elements in $I$. If $I$ is $\m$-primary, the \emph{Hilbert-Kunz multiplicity of $I$ in $R$} is defined by
$$e_{HK}(I,R)=\lim\limits_{e\to \infty} 
\frac{\lambda(R/I^{[q]})}{q^d}.
$$
The existence of the  previous limit was proven by Monsky  \cite{MonskyHK}.
Under mild conditions,
$e_{HK}(\m,R)=1$ if and only if $R$ is a regular ring \cite{WY}. 
The Hilbert-Kunz multiplicity can be interpreted as a measure of singularity: the smaller it is, the nicer the ring is. For instance, Aberbach and Enescu proved rings with small Hilbert-Kunz multiplicity are Gorenstein and $F$-regular \cite{AE-HK} (see also \cite{BE-HK}). We have that
$$
\lambda(R/I^{[q]})=q^\alpha\lambda(R/I \otimes_R {^e}R)=q^\alpha\lambda(\Tor^R_{0} (R/I, {^e}R))
$$
This gives rise to the following extension of the Hilbert-Kunz multiplicity for higher Tor functors. Let $N$ be a finitely generated $R$-module. For an integer $i \gs 0$ define
\[
\beta^F_i(M,N)=\lim\limits_{e\to \infty} \frac{\lambda(\Tor^R_{i} (M, {^e}N))}{q^{(d+\alpha)}}.
\]
We denote $\beta^F_i(K,R)$ by $\beta^F_i(R)$  and call it the {\it $i$-th Frobenius Betti number of $R$}.

These higher invariants also detect regularity. Namely,  Aberbach and Li \cite{AberbachLi} show that  $R$ is a regular ring
if and only if $\beta_i^F(R)=0$ for some $i\gs 1$.
Note that $R$ is regular if and only if $K$ has finite projective dimension as $R$-module. In this manuscript,  we seek an answer to the following question.

\begin{question} \label{question_finite_proj_dim}
Let $M$ be an $R$-module of finite length. What vanishing conditions on $\beta^F_i(M,R)$ imply that $M$ has finite projective dimension?
\end{question}

Miller \cite{ClaudiaMillerMathZ} showed if $R$ is a complete intersection and $M$ is an $R$-module of finite length, then the vanishing of $\beta^F_i(M,R)$ for some $i\gs 1$ implies that $M$ has finite projective dimension. We refer to  \cite{DaoSmirnov} for related results for Gorenstein rings. In Section \ref{SecProjDim}, we answer this question for rings that have small regular algebras, and for rings that have $F$-contributors.
Later, we focus on one-dimensional rings and give the following characterization for the vanishing of $\beta^F_i(M,R).$

\begin{theorem*}(see Theorem \ref{equiv})
Let $(R,\m,K)$ be a one-dimensional local ring of positive characteristic $p$, and let $M$ be an $R$-module of finite length. Let $(G_j,\varphi_j)_{j \gs 0}$ be a minimal free resolution of $M$. Then the following are equivalent:
\begin{enumerate}[(i)]
\item  $\IM(\varphi_{i+1}) \subseteq H^0_\m(G_i)$.
\item  $\Tor_i^R(M,\eee {(R/\p)}) =0$ for all $e \gs 0$, for all $\p \in \MIN(R)$.
\item $\Tor_i^R(M,\eee {(R/\p)}) =0$ for all $e \gg 0$, for all $\p \in \MIN(R)$.
\item $\beta_i^F(M,R) = 0$.
\end{enumerate}
Assume in addition that $R$ is complete and $K$ is algebraically closed. If $V$ denotes the integral closure of $R$ in its ring of fractions, then the conditions above are equivalent to
\begin{enumerate}[(i)]
\setcounter{enumi}{4}
\item  $\Tor_i^R(M,V) = 0$.
\end{enumerate}
\end{theorem*}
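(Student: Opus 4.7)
The plan is to establish the cycle of implications (i) $\Rightarrow$ (ii) $\Rightarrow$ (iii) $\Rightarrow$ (iv) $\Rightarrow$ (i), with (ii) $\Rightarrow$ (iii) being trivial, and then to link (v) to the others under the additional hypotheses. The key structural observation to record at the start is that in a one-dimensional local ring, $H^0_\m(R)\subseteq \sqrt{0}$: the quotient $R/\sqrt 0$ is reduced and one-dimensional, hence Cohen--Macaulay of depth one, forcing $H^0_\m$ to vanish there. Therefore $H^0_\m(R) \subseteq \p$ for every minimal $\p$, and any matrix with entries in $H^0_\m(R)$ becomes identically zero modulo $\p$, together with all its Frobenius powers.

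For (i) $\Rightarrow$ (ii), the plan is as follows. Under (i) the entries of the matrix $A_{i+1}$ representing $\varphi_{i+1}$ lie in $H^0_\m(R)$, so $\bar A_{i+1}^{[q]}=0$ over $R/\p$ for every $q$. Thus $\Tor_i^R(M,{}^e(R/\p))$ is the kernel of $\bar A_i^{[q]}$ on $(R/\p)^{b_i}$. To show this kernel is zero I would first localize at $\p$: since $\Omega^{i+1}(M)\subseteq H^0_\m(G_i)$ is $\m$-torsion, $\Omega^{i+1}(M)_\p = 0$, so $(\varphi_i)_\p$ is injective, and in fact a split injection because exact complexes of free modules over the Artinian Gorenstein local ring $R_\p$ split. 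Splitting is preserved by tensoring with $\kappa(\p)$, so $(\varphi_i)_\p\otimes \kappa(\p)$ is injective. A short computation over $\overline{\kappa(\p)}$ shows that in characteristic~$p$ applying the Frobenius to the matrix entries preserves injectivity, so $\bar A_i^{[q]} \otimes \kappa(\p)$ is injective. Since $R/\p$ is a domain and $(R/\p)^{b_i}$ is torsion-free, this forces $\ker \bar A_i^{[q]}=0$.

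For (iii) $\Rightarrow$ (iv) I would use a prime filtration $0=R_0\subsetneq R_1\subsetneq \cdots \subsetneq R_n=R$ with quotients $R/\q_k$ for $\q_k\in \Ass(R)$. In dimension one each $\q_k$ is either minimal or equals $\m$, so the quotient is either a one-dimensional domain $R/\p$ or the residue field $K$. Twisting by Frobenius and iterating the long exact sequence for Tor bounds
\[
\lambda(\Tor_i^R(M,{}^eR)) \leq \sum_{\p \in \MIN(R)} n_\p\, \lambda(\Tor_i^R(M,{}^e(R/\p))) + C\, q^{\alpha},
\]
where $n_\p=\ell_{R_\p}(R_\p)$ and $C$ counts the embedded contributions. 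Under (iii) the sum vanishes for $e\gg 0$, so dividing by $q^{d+\alpha}=q^{1+\alpha}$ and passing to the limit gives $\beta_i^F(M,R)=0$.

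The hardest direction is (iv) $\Rightarrow$ (i). The same filtration estimate in reverse, using non-negativity of each contribution, forces $\beta_i^F(M,R/\p)=0$ for every minimal $\p$. Working over the one-dimensional domain $R/\p$ one needs a rigidity-type result---presumably established earlier in the paper in the spirit of Aberbach--Li---that converts the asymptotic Frobenius Betti number vanishing into $\Tor_i^R(M,{}^e(R/\p))=0$ for $e\gg 0$ and, crucially, into the conclusion that the entries of $A_{i+1}$ vanish modulo $\p$. Intersecting over all minimal $\p$ places these entries in $\sqrt 0$, and the upgrade to $H^0_\m(R)$ follows from the fact that $\Omega^{i+1}(M)$ has no associated primes other than $\m$, itself a consequence of the depth-lemma applied to syzygies of the finite-length module $M$ in dimension one. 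For the equivalence with (v), under completeness with $K$ algebraically closed $V$ is a finite product of DVRs $V_\p$ indexed by the minimal primes of $R$; Kunz's theorem says Frobenius is flat over each $V_\p$, which links $\Tor_i^R(M,V)$ with the family $\Tor_i^R(M,{}^e(R/\p))$ uniformly in $e$, identifying (v) with (ii)/(iii). The principal obstacle throughout is the rigidity step required in (iv) $\Rightarrow$ (i): turning a limit statement into termwise vanishing, together with the final promotion from $\sqrt 0$ to $H^0_\m$ in the image condition.
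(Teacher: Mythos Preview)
Your cycle of implications matches the paper's, and your arguments for (i)$\Rightarrow$(ii) and (iii)$\Rightarrow$(iv) are essentially the paper's (the paper phrases (i)$\Rightarrow$(ii) via the Buchsbaum--Eisenbud acyclicity criterion rather than your direct injectivity-over-the-fraction-field argument, but the content is the same; your aside that $R_\p$ is ``Artinian Gorenstein'' is unneeded and not generally true---any bounded-below acyclic complex of finite free modules over a local ring splits). Two points, however, are genuine gaps.

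\textbf{The upgrade from $\sqrt{0}$ to $H^0_\m$.} Your justification is wrong: the depth lemma does \emph{not} force $\Ass(\Omega_{i+1})\subseteq\{\m\}$ for syzygies of a finite-length module---the first syzygy $\m$ of $K$ typically has every associated prime of $R$. The containment $\IM(\varphi_{i+1})\subseteq\sqrt{0}G_i$ is essential input, not a side remark. The paper's argument is: once the entries lie in $\sqrt{0}$, the rank conditions $b_i=\rk(\varphi_i)$ and $I_{b_i}(\varphi_i)\not\subseteq\p$ (obtained in the (i)$\Rightarrow$(ii) analysis, now at $q=1$) make $(\varphi_i)_\p$ injective for every minimal $\p$, whence $(\Omega_{i+1})_\p=\ker(\varphi_i)_\p=0$. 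Equivalently, $(\Omega_{i+1})_\p$ is a free summand of $(G_i)_\p$ lying inside $\p R_\p\cdot(G_i)_\p$, hence zero. Either way the upgrade uses the localized split exactness together with the nilradical containment; it is not a general syzygy fact.

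\textbf{The link with $V$.} Flatness of Frobenius over the DVR $V_\p$ is a statement about $V_\p$-module structure; it does not by itself relate the $R$-modules $\Tor_i^R(M,V_\p)$ and $\Tor_i^R(M,{}^e(R/\p))$. The paper proves the sharper structural fact (Lemma~\ref{qthPowerParameter}) that ${}^e(R/\p)\cong\bigoplus V_\p$ \emph{as $R$-modules} for $e\gg 0$. This genuinely uses that $K$ is algebraically closed: since $R/\p$ and $V_\p$ then share residue field, $V_\p=R/\p+\m_{V_\p}$; the conductor being $\m_{V_\p}$-primary gives $\m_{V_\p}^{[q]}\subseteq R/\p$ for $q\gg 0$, hence $V_\p\subseteq (R/\p)^{1/q}$, making $(R/\p)^{1/q}$ a torsion-free (hence free) $V_\p$-module with compatible $R$-action. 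With this in hand (iii)$\Leftrightarrow$(v) is immediate.

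Finally, you are right that the substance of (iv)$\Rightarrow$(i) is a separate lemma. In the paper this is Lemma~\ref{positive}: if $\IM(\varphi_{i+1})\not\subseteq\p G_i$ for some minimal $\p$, a valuation-theoretic estimate over the normalization of $R/\p$ manufactures an explicit element in $\ker(\ov\varphi_i^{[q]})$ and bounds $\lambda(\Tor_i^R(M,{}^e(R/\p)))$ below by a positive multiple of $q$, forcing $\beta_i^F(M,R)>0$. It is a hands-on positivity argument specific to dimension one rather than an Aberbach--Li-type rigidity for $K$.
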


As a consequence of this theorem, we show that if $R$ is a one-dimensional Cohen-Macaulay local ring and $\lambda(M)<\infty$, then $\beta^F_i(M,R) = 0$ for any $i \gs 1$ implies that $M$ has finite projective dimension (see Corollary \ref{CorProjCMDimOne}). Furthermore, we prove that the vanishing of two consecutive $\beta^F_i(M,R)$ implies that  $M$ has finite projective dimension in every one-dimensional local ring (see Corollary \ref{CorPrjDimGralOneDim}).

From the theorem above we have that $\beta^F_i(M,R)=0$  if and only the $(i+1)$-syzygy has finite length. On the other hand, there are modules of infinite projective dimension over one-dimensional rings which have second syzygies of finite length (see Example \ref{finitelength}). Motivated by Iyengar's question about the eventual stability of dimensions of syzygies and by our results regarding $\beta^F_i(M,R)$, 
we ask the following question.

\begin{question}\label{Q SyzFinLen}
Let $R$ be a $d$-dimensional local ring, and let $M$ be a finitely generated $R$-module such that $\pd_R(M)=\infty$ and $\lambda(M)<\infty$. If $i>d+1,$ then must the length of the $i$-th syzygy be infinite?
\end{question}

In Section \ref{SecSyzFinLen}, we study this question, mainly for one-dimensional rings. In particular, we show that the answer to Question \ref{Q SyzFinLen} is positive for one-dimensional Buchsbaum rings (see Proposition \ref{Prop Buchsbaum}). We also obtain a partial answer for modules whose Betti numbers are eventually non-decreasing (see Proposition \ref{Prop Betti non decreasing}). Furthermore, we show that the first and third syzygies of $M$ are either zero or have infinite length for every finite length module $M$ over a one-dimensional ring (see Corollary \ref{Cor 1-3}). The assumption of $M$ having finite length is necessary, as shown in Example \ref{Ex 1-3 finite length}. Aside from the study of projective dimension, we study basic properties of the higher invariants that resemble the Hilbert-Kunz multiplicity in other aspects.

\section{Notation and terminology}

Throughout this article, $(R,\m,K)$ will denote a local ring of Krull dimension $\dim(R)= d$. For a finitely generated $R$-module $M$, we define $\dim(M) = \dim(R/(0:_R M))$, where $0:_R~M = \{x \in R \mid xM = 0\}$. When $M = 0$, we set $\dim(M) = -1$. An element $x \in R$ such that $\dim(R/(x)) = d-1$ will be called a {\it parameter of $R$}. Given a finitely generated $R$-module $M$, a {\it minimal free resolution} $(G_\bullet,\varphi_\bullet)$ of $M$ is an exact sequence
\[
\xymatrixcolsep{5mm}
\xymatrixrowsep{2mm}
\xymatrix{
\ldots  \ar[rr] && G_{i+1} \ar[rr]^-{\varphi_{i+1}} && G_i \ar[rr]^-{\varphi_i} && \ldots \ldots \ar[rr] && G_1 \ar[rr]^-{\varphi_1} && G_0 \ar[rr] && M \ar[rr] && 0
}
\]
such that $G_i \cong R^{\beta_i(M)}$ are free $R$-modules and $\IM(\varphi_{i+1}) \subseteq \m G_i$.  The integers $\beta_i(M) = \rk(G_i) = \lambda(\Tor^R_j(M,K))$ are called the {\it Betti numbers of $M$}. If $\beta_i(M)=0$ for some $i$, we say that $M$ has {\it finite projective dimension}, and that it is equal to  $\pd_R(M)=\max\{i\in\NN \mid \beta_i(M)\neq 0\}$. We adopt the convention that $\pd_R(M)=-\infty$, when $M=0$. For all $i \gs 0$ we set $\Omega_i(M) = \CoKer(\varphi_i)$, and we call it the {\it $i$-th syzygy of the module $M$}. Note that $\Omega_0(M) = M$. When no confusion may arise, we will denote $\Omega_i(M)$ simply by $\Omega_i$.

Throughout the manuscript we often make use of local cohomology tools. For every $k\in \NN,$ the quotient map $R/\m^{k+1}\to R/\m^k$ induces maps of functors 
$$\Ext^i_R(R/\m^{k},-)\to \Ext^i_R(R/\m^{k+1},-).$$ For an $R$-module $M$, we define the {\it $i$-th local cohomology of $M$ with support on $\m$} by
$$
H^i_\m(M)=\lim\limits_{k\to \infty}\Ext^i_R(R/\m^k,M).
$$
In particular, $H^0_\m(M)=\bigcup_{k\in\NN} 0:_M \m^k=\{v\in M \mid \m^kv=0\hbox{ for some }k\in\NN\}$. 
For a non-zero finitely generated $R$-module $M$, $\Depth(M)$ denotes the smallest integer $j$ such that $H^j_\m(R) \ne 0$. When $\Depth(M) = \dim(M)$, the module is called {\it Cohen-Macaulay}, and $M$ is called {\it maximal Cohen-Macaulay} if $\Depth(M) = \dim(R)$.

We now review some basic facts about integral closures. For an ideal $I \subseteq R$ and an element $x \in R$ we say that $x$ is integral over $I$ if it satisfies an equation of the form $x^n+r_1x^{n-1}+\ldots + r_n = 0$, where $r_j \in I^j$ for all $j=1,\ldots,n$. The set of elements integral over $I$ forms an ideal, which is called the {\it integral closure of $I$}, and denoted $\overline{I}$. For an ideal $J \subseteq I$, we say that $J$ is a {\it reduction of $I$} if $\overline{J} = \overline{I}$. We say that $J$ is a {\it minimal reduction of $I$} if it is a reduction of $I$ which minimal with respect to containment. We refer the reader to \cite[Chapter 8]{SwansonHuneke} for more details about reductions. For a domain $R$, let $V$ be the integral closure of $R$ in its field of fractions $L$. We define the {\it conductor of $R$} as the set of all elements $z \in L$ such that $z V \subseteq R$, and we denote it by $C$. When $V$ is finite over $R$, one can show that $C$ is the largest ideal which is common to $R$ and $V$, and that $C$ contains a non-zero divisor for $R$ \cite[Exercise 2.11]{SwansonHuneke}. In particular, if $(R,\m,K)$ is an excellent one dimensional local domain, the conductor is $\m$-primary. See \cite[Chapter 12]{SwansonHuneke} for more results about conductors.

We also need the notion of dualizing complex. We refer to \cite[P. 51]{RobertsHom} or to \cite[Chapter V]{Residues} for more details.
\begin{definition}  Let $(S,\n,L)$ be a local ring of dimension $d$. We say that a complex $D^\bullet$ is a dualizing complex of $S$ if
\begin{enumerate}
\item $D^i=\bigoplus_{\dim S/\p=d-i} E_S(S/\p).$
\item The cohomology $H^i(D^\bullet)$ is finitely generated.
\end{enumerate}
\end{definition}
\begin{remark}\label{RemDualComplex}
If $(S,\n,L)$ is a complete ring, then $S$ has a dualizing complex, $D_S^\bullet$ \cite[P. $299$]{Residues}. If $\p$ is a prime ideal such that $\dim S/\p=\dim S$, we have that $S_\p$ is Artinian, hence complete. In addition, $D_{S_\p}^\bullet:=D_{S}^\bullet \otimes S_\p$ is a dualizing complex for $S_\p.$ Furthermore, $H^j(D^\bullet_{S_\p})=H^j(D_S^\bullet )\otimes S_\p=0$ for $j>0$
and $\omega_{S_\p}\cong H^0(D_{S_\p}^\bullet)=E_{S_\p}(S_\p/\p S_\p)$,  because $S_\p$ is Artinian, and so, Cohen-Macaulay.
\end{remark}

We now introduce Buchsbaum rings, which we study 
Question \ref{Q SyzFinLen} in Section \ref{SecSyzFinLen}.
\begin{definition} Let $(R,\m,K)$ be a local ring of dimension $d$. We say that $R$ is a {\it Buchsbaum ring} if, for any system of parameters $x_1,\ldots,x_d$, we have
\[
\ds (x_1,\ldots,x_{i-1}):x_i = (x_1,\ldots,x_{i-1}):\m
\]
for every $i=1,\ldots,d$. When $i=1$, the ideal $(x_1,\ldots,x_{i-1})$ is simply the zero ideal.
\end{definition}
There are several equivalent ways to define Buchsbaum rings, but the one above is the most convenient for our purposes. 
\begin{remark} \label{RemParameter} Let $(R,\m,K)$ be a one-dimensional local ring. Suppose that $R$ is not Cohen-Macaulay, so that $H^0_\m(R) \ne 0$. Then there exists a parameter $x$ of $R$ such that $H^0_\m(R) = 0:_R x$. In fact, fix an integer $n \in \NN$ such that $\m^n H^0_\m(R)$, using that $H^0_\m(R) \subseteq R$ is an ideal, hence it is finitely generated. Take any parameter $y \in \m$, and set $x = y^n$. With this choice, one has $xH^0_\m(R) \subseteq \m^n H^0_\m(R) = 0$, so that $H^0_\m(R) \subseteq 0:_R x$. On the other hand, there exists $k \in \NN$ such that $\m^k \subseteq (x)$. Therefore, if $r \in 0:_R x$, we get $r \m^k \subseteq r(x) = 0$, so that $r \in H^0_\m(R)$. 
We conclude that $H^0_\m(R)= 0:_R x.$
\end{remark} 
\begin{remark} \label{Rem Buchsbaum} Let $(R,\m,K)$ be a one-dimensional Buchsbaum ring. By Remark \ref{RemParameter}, there exists a parameter $x \in R$ such that $0:_R x = H^0_\m(R)$. By definition of Buchsbaum ring, we have that 
\[
\ds H^0_\m(R) = 0:_R x = 0:_R \m.
\]
In particular, $\m H^0_\m(R) = 0$, that is, $H^0_\m(R) \cong \bigoplus_{j=1}^t K$ is a finite dimensional $K$-vector space.
\end{remark}

For the rest of the section, we assume that $(R,\m,K)$ is a local ring of characteristic $p>0$. For an integer $e \gs 1$, we consider the $e$-th iteration of the Frobenius endomorphism $F^e:R \to R$, $F^e(r) = r^{p^e}$ for all $r \in R$. For an $R$-module $M$, one can consider $M$ with the action induced by restriction of scalars, via $F^e$. We denote this module by $\eee M$. More explicitly, for $r \in R$ and $m \in \eee M$, we have $r * m = r^{p^e}m$. 
\begin{definition} We say that $R$ is {\it $F$-finite} if $^1 R$ is a finitely generated $R$-module. 
\end{definition} 
Note that $R$ is $F$-finite if and only if $\eee R$ is a finitely generated $R$-module for any $\gs 1$ or, equivalently, for all $e \gs 1$. Furthermore, $F$-finite rings are excellent \cite[Theorem 2.5]{Kunz1}. When $R$ is $F$-finite, we have that $[K:K^p]< \infty$. In this case, we set $\alpha = \log_p[K:K^p]$.

\section{Definition and properties of $\beta^F_i(M,N)$ and $\mu_i^F(M,N)$}
 We start by defining the Frobenius Betti numbers and showing basic properties that resemble the Hilbert-Kunz multiplicity.
\begin{definition}[see also \cite{Li-PAMS}]
Let $(R,\m,K)$ be a local ring of characteristic $p>0$, let $M$ be an $R$-module of finite length and let $N$ be a finitely generated $R$-module. Define
$$
\beta^F_{i,R}(M,N)=\lim\limits_{e\to \infty} \frac{\lambda(\Tor^R_{i} (M, {^e}N))}{q^{(d+\alpha)}}.
$$ 
We denote $\beta^F_{i,R}(K,R)$ by $\beta^F_{i,R}(R)$ and call it the {\it $i$-th Frobenius Betti number of $R$}. 
If the ring is clear from the context, we only write $\beta^F_{i}(M,N).$
The above limit exists by the main result in \cite{Seibert}. 
\end{definition}
We point out that Li \cite{Li-PAMS} focused on  $\beta_i^F(R/I,R),$  which he denoted by $t_i(I,R).$

\begin{example}
Suppose that $R=S/fS,$ where $S$ is an $F$-finite regular local ring of characteristic $p>0$,  and $f\in S$. We can write $\eee R\cong  R^{a_e}\oplus M_e$, where $M_e$ has no free summands. The limit $s(R):=\lim\limits_{e\to\infty}\frac{a_e}{q^{(d+\alpha)}}$ exists \cite[Theorem 4.9]{KevinFsig}, and it is called the $F$-signature of $R$, which is an important invariant related to strong $F$-regularity \cite[Theorem 0.2]{AberbachLeuschke}.
We consider the minimal free resolution of $\eee R$:
\[
\xymatrixcolsep{5mm}
\xymatrixrowsep{2mm}
\xymatrix{
\ldots  \ar[rr]
&&  R^{\beta_i(\eee R)} \ar[rr] 
&&  R^{\beta_{i-1}(\eee R)} \ar[rr]
&& \ldots \ar[rr]
&& R^{\beta_0(\eee R)} \ar[rr]  
&& \eee R \ar[r] & 0.
}
\]
We note that $\beta_0 (\eee R)=a_e+\beta_0(M_e)$ and $\beta_i(\eee R)=\beta_i(M_e)$ for $i>0.$ Since $M_e$ is a maximal Cohen-Macaulay module with no free summands, we have that $\beta_i(M_e)=\beta_0(M_e)$ for $i>0$ \cite[Proposition 5.3 and Theorem 6.1]{EisenbudBettiHyp}.
Then,
$$
\beta^F_0(R)=e_{HK}(\m,R)=
\lim\limits_{e\to\infty}\frac{\beta_0(\eee R)}{q^{(d+\alpha)}}
=\lim\limits_{e\to\infty}\frac{a_e}{q^{(d+\alpha)}}+\lim\limits_{e\to\infty}\frac{\beta_0(M_e)}{q^{(d+\alpha)}}
=s(R)+\lim\limits_{e\to\infty}\frac{\beta_0(M_e)}{q^{(d+\alpha)}}.
$$
Hence,
$$
\beta^F_i(R)=
\lim\limits_{e\to\infty}\frac{\beta_i(\eee R)}{q^{(d+\alpha)}}
=\lim\limits_{e\to\infty}\frac{\beta_i(M_e)}{q^{(d+\alpha)}}
=\lim\limits_{e\to\infty}\frac{\beta_0(M_e)}{q^{(d+\alpha)}}
=e_{HK}(\m,R)-s(R)
$$
for $i>0.$
\end{example}

As for the Hilbert-Kunz multiplicity, the Frobenius Betti numbers also increase after taking the quotient by a nonzero divisor.

\begin{proposition}
Let $(R,\m,K)$ be a local ring of characteristic $p>0,$
 $M$ be an $R$-module of finite length, and  $x \in \ann(M)$ be a nonzero divisor on $R$. Then
$$\beta^F_{i,R}(M,R)=\lim\limits_{e\to \infty} 
\frac{\lambda(\Tor^R_{i} (M, {^e}R))}{q^{(d+\alpha)}}\ls \beta^F_{i,R/(x)}(N,R/(x))=\lim\limits_{e\to \infty} 
\frac{\lambda(\Tor^{R/(x)}_{i} (M, {^e}(R/(x))))}{q^{(d-1+\alpha)}},$$
 where the subscripts indicate over which ring we are computing the Frobenius Betti  numbers. In particular, $\beta^F_{i,R}(R)\ls \beta^F_{i,R/(x)}(R/(x)).$
\end{proposition}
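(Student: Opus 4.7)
The plan is to reduce the computation of $\Tor^R_i(M,\eee R)$ to one over $R/(x)$ and then bound the result using a natural filtration whose successive quotients are $\eee(R/(x))$. Since $x \in \ann(M)$, we may regard $M$ as a module of the same (finite) length over $R/(x)$; the residue field is unchanged, so $\alpha$ is the same for both rings, while $\dim R/(x) = d-1$, which accounts for the different normalizations in the two limits.

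The first key observation is that, as $x$ is a nonzero divisor on $R$, so is $x^{p^e}$, and the $R$-action of $x$ on $\eee R$ is precisely multiplication by $x^{p^e}$ in the underlying ring. Tensoring the short exact sequence $0 \to R \xrightarrow{\,x\,} R \to R/(x) \to 0$ with $\eee R$ therefore yields $\Tor^R_j(R/(x), \eee R) = 0$ for every $j \gs 1$, together with an isomorphism of $R/(x)$-modules
\[
R/(x) \otimes_R \eee R \;\cong\; \eee(R/(x^{p^e})).
\]
In particular, if $Q_\bullet \to \eee R$ is an $R$-free resolution, then $R/(x) \otimes_R Q_\bullet$ is an $R/(x)$-free resolution of $\eee(R/(x^{p^e}))$. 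Since $M$ is already an $R/(x)$-module, one has the identification $M \otimes_R Q_\bullet = M \otimes_{R/(x)} (R/(x) \otimes_R Q_\bullet)$, and taking homology gives
\[
\Tor^R_i(M, \eee R) \;\cong\; \Tor^{R/(x)}_i\bigl(M,\, \eee(R/(x^{p^e}))\bigr).
\]

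The second step is a filtration argument. The $x$-adic filtration
\[
0 = (x^{p^e})/(x^{p^e}) \subset (x^{p^e-1})/(x^{p^e}) \subset \cdots \subset (x)/(x^{p^e}) \subset R/(x^{p^e})
\]
has $p^e$ successive quotients, each isomorphic to $R/(x)$ via multiplication by the appropriate power of $x$ (using again that $x$ is a nonzero divisor). Applying the exact functor $\eee(-)$ transports this to a filtration of $\eee(R/(x^{p^e}))$ with $p^e$ successive quotients, each isomorphic to $\eee(R/(x))$. The standard subadditivity of length along the long exact sequence of Tor, applied repeatedly, then yields
\[
\lambda\bigl(\Tor^{R/(x)}_i(M, \eee(R/(x^{p^e})))\bigr) \;\ls\; p^e \cdot \lambda\bigl(\Tor^{R/(x)}_i(M, \eee(R/(x)))\bigr).
\]

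Combining the two displays and dividing through by $q^{d+\alpha}$ absorbs the factor $p^e = q$ into the denominator, producing the desired term-by-term inequality
\[
\frac{\lambda(\Tor^R_i(M, \eee R))}{q^{d+\alpha}} \;\ls\; \frac{\lambda(\Tor^{R/(x)}_i(M, \eee(R/(x))))}{q^{d-1+\alpha}}.
\]
Passing to the limit yields the proposition, and specializing to $M = K$ gives the final assertion. The only real subtlety is bookkeeping for the twisted action in the identification $R/(x) \otimes_R \eee R \cong \eee(R/(x^{p^e}))$; once that is in place, both the change-of-rings step (which works because $\eee R$ is $x$-torsion free, hence Tor-independent from $R/(x)$ over $R$) and the filtration argument are routine.
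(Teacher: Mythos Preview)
Your proof is correct and follows essentially the same route as the paper: both arguments reduce $\Tor^R_i(M,\eee R)$ to $\Tor^{R/(x)}_i(M,\eee R\otimes_R R/(x))$ using that $x$ is regular on $\eee R$, and then filter $\eee R\otimes_R R/(x)\cong \eee(R/(x^{q}))$ with successive quotients $\eee(R/(x))$ to obtain the length inequality. Your count of $q$ filtration steps is the right one (the paper writes $q^{\alpha}$, which appears to be a slip, since absorbing a factor of $q$ is exactly what turns $q^{d+\alpha}$ into $q^{d-1+\alpha}$).
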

\begin{proof}
Let $G_\bullet \to {^e}R$ be a minimal free resolution of ${^e}R.$
Let $\overline{R}$ denote $R/xR.$
We have that $\overline{G}_\bullet=G_\bullet \otimes_R \overline{R}$ is a free resolution for ${^e}R\otimes_R \overline{R}$
as an $\overline{R}$-module. Furthermore, we have that $H_0(\overline{G}_\bullet)={^e}R\otimes_R \overline{R}.$ This is a consequence of the fact that $H_i(\overline{G}_\bullet)= \Tor^R_i({^e}R, \overline{R})=0$ for $i>0$ because $x$ is a nonzero divisor on $R$ and ${^e}R$.

Since $x \in \ann(M)$, we have that
$$\Tor^R_i(M,{^e}R)=H_i(M \otimes_R G_\bullet)=H_i(M \otimes_{\ov R} \ov R \otimes_R G_\bullet)=
H_i(M \otimes_{\ov R} \overline{G}_\bullet)
=\Tor^{\overline{R}}_i(M, {^e}R\otimes_R \overline{R}).$$

Since $x$ is a nonzero divisor on $R,$ there is a filtration 
$$
0=L_0\subseteq L_1\subseteq \ldots \subseteq L_{q^{\alpha}}={^e}R\otimes_R \overline{R}
$$
such that $L_{r+1}/L_r={^e}(\overline{R}).$ As a consequence, $\lambda(\Tor^{\overline{R}}_i(M, {^e}R\otimes_R \overline{R}))\ls q^{\alpha}\lambda(\Tor^{\overline{R}}_i(M, {^e}\overline{R})) $

Then 
$$
\lim\limits_{e\to \infty} 
\frac{\lambda(\Tor^R_{i} (M, ^e R))}{q^{(d+\alpha)}}\ls \lim\limits_{e\to \infty} 
\frac{\lambda(\Tor^{\overline{R}}_{i} (M, ^e \overline{R}))}{q^{(d-1+\alpha)}}
$$
\end{proof}

We now introduce  $\mu^F_i(M,N)$, a dual version of $\beta^F_i(M,N)$, which is defined in terms of $\Ext$. In Proposition \ref{HHK Tor=Ext}, we establish a relation between these asymptotic invariants. 

\begin{definition}
Let $(R,\m,K)$ be a local ring of characteristic $p>0,$
let $M$ be an $R$-module of finite length, and let $N$ be a finitely generated $R$-module. We define 
$$
\mu^F_i(M,N)=\lim\limits_{e\to \infty} \frac{\lambda(\Ext^{i}_R (M, {^e}N))}{q^{(d+\alpha)}},
$$ 
\end{definition}

Next, we prove that the numbers $\mu_i^F(M,N)$ are well defined. The proof is essentially the same as the one for $\beta_i^F(M,N)$, as it uses the main result in \cite{Seibert}. Nonetheless, we include it here for completeness.

\begin{proposition}\label{Existence Left}
Let $(R,\m,K)$ be a local ring of characteristic $p>0,$  let $M$ be an $R$-module of finite length, and let $N$ be a finely generated $R$-module.
Then,
$\lim\limits_{e\to \infty} \frac{\lambda (\Ext^i_R(N, ^e M))}{q^{(d+\alpha)}}$ exists. 
Moreover, if $0\to N_1\to N_2\to N_3\to 0$ is a short exact sequence,
then 
$$
\lim\limits_{e\to \infty} \frac{\lambda ( \Ext^i_R(M, {^e}N_2))}{q^{(d+\alpha)}}
=\lim\limits_{e\to \infty} \frac{\lambda (\Ext^i_R(M, {^e}N_1))}{q^{(d+\alpha)}}
+\lim\limits_{e\to \infty} \frac{\lambda (\Ext^i_R(M, {^e}N_3))}{q^{(d+\alpha)}}.
$$
\end{proposition}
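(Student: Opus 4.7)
The plan is to mirror the existence proof cited for $\beta_i^F(M,N)$: reduce $\Ext^i_R(M,{^e}N)$ to the cohomology of a bounded complex of finitely generated free $R$-modules (with the Frobenius twist on the second factor), and then invoke Seibert's main theorem from \cite{Seibert}. First I would set up the complex. Let $(P_\bullet,\partial_\bullet)$ be a minimal free resolution of $M$, truncate it at homological degree $i+1$, and apply $\Hom_R(-,R)$ to produce a bounded cochain complex $Q^\bullet\colon Q^0\to Q^1\to\cdots\to Q^{i+1}$ of finitely generated free $R$-modules. Because each $P_j$ is free, $\Hom_R(P_j,N)\cong Q^j\otimes_R N$ naturally, and the differentials of $\Hom_R(P_\bullet,N)$ and $Q^\bullet\otimes_R N$ agree, so
\[
\Ext^j_R(M,N)\;\cong\; H^j(Q^\bullet\otimes_R N)\qquad\text{for all }0\leqslant j\leqslant i.
\]

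Next I would verify the finite length hypothesis needed to apply Seibert. Since $M$ has finite length, it is annihilated by some power $\m^k$, hence so is $\Ext^j_R(M,-)$; combined with the fact that $N$ is finitely generated, this shows $H^j(Q^\bullet\otimes_R N)$ has finite length for every finitely generated $R$-module $N$. Thus $Q^\bullet$ is a bounded complex of finitely generated free modules whose cohomology, after tensoring with any finitely generated $R$-module, has finite length. Substituting $N$ with ${^e}N$ gives $\lambda(\Ext^i_R(M,{^e}N))=\lambda(H^i(Q^\bullet\otimes_R{^e}N))<\infty$, and Seibert's theorem then yields that
\[
\lim_{e\to\infty}\frac{\lambda(H^i(Q^\bullet\otimes_R{^e}N))}{q^{d+\alpha}}
\]
exists. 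This proves the first assertion.

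For the additivity, the main observation is that the functor ${^e}(-)$ is simply restriction of scalars along the iterated Frobenius $F^e\colon R\to R$: the underlying abelian group is unchanged, so ${^e}(-)$ is exact. Hence a short exact sequence $0\to N_1\to N_2\to N_3\to 0$ produces a short exact sequence $0\to{^e}N_1\to{^e}N_2\to{^e}N_3\to 0$ for every $e$. The additivity of $\mu^F_i(M,-)$ is then built into Seibert's theorem, which asserts that the Frobenius-asymptotic functional $N\mapsto \lim_e\lambda(H^i(Q^\bullet\otimes_R{^e}N))/q^{d+\alpha}$ factors through the Grothendieck group of finitely generated $R$-modules; equivalently, it is additive on short exact sequences in $N$.

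The main obstacle I anticipate is the additivity: it does not follow cleanly from a naive length count on the long exact sequence of $\Ext$ (applied to $0\to{^e}N_1\to{^e}N_2\to{^e}N_3\to 0$), because the connecting terms $\lambda(\Ext^{i-1}_R(M,{^e}N_3))$ and $\lambda(\Ext^{i+1}_R(M,{^e}N_1))$ are themselves of order $q^{d+\alpha}$ and so do not a priori die in the limit. Thus one genuinely needs to invoke the strong form of Seibert's theorem (additivity at the level of the limit functional on the bounded free complex $Q^\bullet$), rather than to try to establish it by a direct five-term estimate; any plan that relies only on the long exact sequence of $\Ext$ will only give the one-sided inequality $\mu^F_i(M,N_2)\leqslant\mu^F_i(M,N_1)+\mu^F_i(M,N_3)$.
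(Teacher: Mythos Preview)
Your proposal is correct and follows essentially the same strategy as the paper: express $\Ext^i_R(M,{^e}N)$ as the $i$-th (co)homology of a fixed bounded complex of finitely generated free modules applied to ${^e}N$, and then invoke Seibert's theorem to obtain both existence of the limit and additivity in $N$. The only cosmetic difference is that you explicitly dualize the truncated resolution to rewrite $\Hom_R(P_\bullet,{^e}N)$ as $Q^\bullet\otimes_R{^e}N$, whereas the paper works directly with $g_e(N)=\lambda(H^i(\Hom_R(G_\bullet,{^e}N)))$ and checks the subadditivity/split-additivity hypotheses of Seibert by hand; your anticipation that the long exact sequence alone yields only the inequality $\mu^F_i(M,N_2)\leqslant\mu^F_i(M,N_1)+\mu^F_i(M,N_3)$ is exactly right and matches the paper's reliance on Seibert for the reverse direction.
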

\begin{proof} Let $G_\bullet \to M$ be a minimal free resolution of $M$ and define
\[
\ds  g_e(N)=\lambda(H^i(\Hom_R(G_\bullet),\eee N)).
\]
Let  $0\to N_1\to N_2\to N_3\to 0$ be a short exact sequence of finitely generated $R$-modules.
We have that $g_e(N_2)\ls g_e(N_1)+g_e(N_3)$ and equality holds if the sequence splits.
Then, 
$$
\lim\limits_{e\to\infty}\frac{g_e(N)}{q^{(d+\alpha)}}=\lim\limits_{e\to \infty} \frac{\lambda (\Ext^i_R(M, \eee N))}{q^{(d+\alpha)}}
$$
exists, and it is additive in short exact sequences \cite{Seibert}. 
\end{proof}

\begin{proposition} \label{PropAdditive}
Let $(R,\m,K)$ be a local ring of characteristic $p>0,$  $M$ be an $R$-module of finite length, and $N$ be a finitely generated $R$-module. Let $\Lambda$ be the set of all prime ideals $\p$ such that $\dim R/\p=\dim R$. We have that
\[
\ds \beta^F_i(M,N)=\sum_{\p \in \Lambda} \beta^F_i(M,R/\p)\lambda_{R_\p}(N_\p)
\]
and
\[
\ds \mu^F_i(M,N)= \sum_{\p\in\Lambda} \mu^F_i(M,R/\p)\lambda_{R_\p} (N_\p).
\]
\end{proposition}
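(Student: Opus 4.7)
The plan is to establish both identities simultaneously by reducing to the case $N = R/\p$ through a prime filtration, and then observing that only primes in $\Lambda$ contribute in the limit. First I would fix a prime filtration of $N$, i.e.\ a chain
\[
0 = N_0 \subsetneq N_1 \subsetneq \cdots \subsetneq N_t = N \quad\text{with}\quad N_j/N_{j-1} \cong R/\p_j.
\]
Applying additivity on short exact sequences repeatedly --- Proposition \ref{Existence Left} for $\mu^F_i$, and the parallel statement for $\beta^F_i$, which follows from the main result of \cite{Seibert} combined with the long exact sequence of Tor --- I obtain
\[
\beta^F_i(M,N) = \sum_{j=1}^t \beta^F_i(M, R/\p_j), \qquad \mu^F_i(M,N) = \sum_{j=1}^t \mu^F_i(M, R/\p_j).
\]

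Next I would show that $\beta^F_i(M,R/\p) = \mu^F_i(M,R/\p) = 0$ whenever $\dim R/\p < d$. Seibert's theorem applied to the $R$-module $R/\p$ yields that $\lambda(\Tor^R_i(M,\eee{(R/\p)}))$ and $\lambda(\Ext^i_R(M,\eee{(R/\p)}))$ grow at most like $q^{\dim(R/\p)+\alpha}$, so dividing by $q^{d+\alpha}$ with $d > \dim R/\p$ forces the limit to vanish. Consequently only indices $j$ with $\p_j \in \Lambda$ contribute to the right-hand sides displayed above.

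Finally I would count the multiplicity of each $\p \in \Lambda$ in the filtration. Since $\dim R/\p = \dim R$ forces $\p$ to be a minimal prime of $R$ (any strict subprime would extend a saturated chain in $R/\p$ to one of length $d+1$), the ring $R_\p$ is Artinian. Localizing the prime filtration of $N$ at $\p$ and using that $(R/\p_j)_\p = 0$ unless $\p_j \subseteq \p$, which by minimality of $\p$ means $\p_j = \p$, I obtain a composition series of $N_\p$ over $R_\p$ in which the residue field $R_\p/\p R_\p$ appears exactly once for each index $j$ with $\p_j = \p$. Hence $\#\{j : \p_j = \p\} = \lambda_{R_\p}(N_\p)$, and regrouping the sums in the first step by the value of $\p_j$ gives the claimed formulas. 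The main obstacle is the vanishing step: establishing it requires more than the mere existence of the limit in \cite{Seibert}, namely the growth bound for the relevant length functions against $\eee{(R/\p)}$ in terms of $\dim(R/\p)$; the remaining two steps are a routine prime filtration and localization count.
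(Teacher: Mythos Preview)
Your proof is correct and follows essentially the same route as the paper: prime filtration, additivity from Seibert's result, vanishing of the contribution from primes with $\dim R/\p < d$, and the localization count showing that each $\p \in \Lambda$ appears $\lambda_{R_\p}(N_\p)$ times. The only differences are cosmetic: you spell out why $\p \in \Lambda$ forces $\p$ to be minimal before localizing, whereas the paper simply localizes the filtration and reads off the length; and for the vanishing step you invoke the growth bound implicit in Seibert's work, while the paper cites his Proposition~1(a) directly.
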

\begin{proof}
We only prove the first statement, since the proof of the second is completely analogous. Let $0=N_0\subseteq N_1\subseteq\ldots\subseteq N_h=N$ be a  filtration for $N$ such that $N_j/N_{j-1}\cong R/\p_j,$ where $\p_j\subseteq R$ is a prime ideal. Thus, we have short exact sequences $0 \to N_{j-1} \to N_j \to R/\p_j \to 0.$ We deduce that $\beta_i^F(M,N) = \sum_{j=1}^h \beta_i^F(M,R/\p_j)$
 \cite[Proposition 1(b)]{Seibert}. In addition, we have that $\beta_i^F(M,R/\p_j) = 0$ whenever $\dim(R/\p_j) < \dim(R)$ \cite[Proposition 1(a)]{Seibert}. To prove the result, we need to count the number of times that a prime $\p$ such that $\dim R/\p=\dim R$ appears in the prime filtration. This number is obtained by localizing the above filtration at $\p$, and counting the length of the resulting chain. 
Since the localized chain is a composition series of the module $N_\p$, we obtain that the number of times $\p$ appears in the prime filtration above is given by $\lambda_{R_\p} (N_\p)$. Then
$$
\beta^F_i(M,N)=\sum_{{\tiny \begin{array}{c}  j=1 \\ \p_j \in \Lambda \end{array}}}^h \hspace{-0.2cm} \beta_i^F(R/\p_j) = \sum_{\p\in\Lambda} \beta^F_i(M,R/\p)\lambda_{R_\p} (N_\p).
$$ 
\end{proof}

\begin{remark} \label{RemAdditive}
It follows from Proposition \ref{PropAdditive} that, if $\beta^F_i(M,R)=0$ for some $i\in \NN$, we have that $\beta^F_i(N,R/\p)=0$ for every minimal prime of $R$ such that $\dim(R/\p)=d$. Therefore, if this is the case, $\beta^F_i(M,N)=0$ for every finitely generated $R$-module $N$, using again Proposition \ref{PropAdditive}. A similar statement holds for $\mu^F_i(M,R)$.
\end{remark}

The following theorem is related to results of Chang \cite[Lemma 1.20 and Corollary 2.4]{Chang}, and in some cases it follows from them. We present a different proof that does not use spectral sequences.
\begin{theorem}\label{Thm Bass}
Let $(R,\m,K)$ be a local ring of characteristic $p>0,$
 $M$ be an $R$-module of finite length, and  $N$ be a finitely generated $R$-module.
Then
$$
\lim\limits_{e\to\infty}\frac{\lambda(\Ext^i_R(M,{^e}N))}{q^{(i+1+\alpha)}}=0
$$
for $i<d.$ In particular, 
$\mu^F_i(M,N)=0$ for $i<d.$
\end{theorem}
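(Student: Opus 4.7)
The plan is to prove, by induction on $i \gs 0$, the stronger statement that $\lambda(\Ext^i_R(M,\eee{N})) = o(q^{i+1+\alpha})$ whenever $\dim R \gs i+1$; the claimed vanishing $\mu^F_i(M,N)=0$ for $i < d$ is then immediate since $i+1 \ls d$. For the base case $i=0$, fix $k$ with $\m^k M = 0$. Every $R$-linear map $M \to \eee{N}$ has image in $0 :_{\eee{N}} \m^k$, which as a subset of $N$ is contained in $H^0_\m(N)$. The elementary estimate $\lambda(\Hom_R(M,X)) \ls \lambda(M)\cdot\lambda(0:_X \m)$ then yields $\lambda(\Hom_R(M,\eee{N})) \ls \lambda(M)\cdot\lambda(H^0_\m(N))$, a bound independent of $e$, so division by $q^{1+\alpha}$ gives the limit $0$.

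For the inductive step ($i \gs 1$, $\dim R \gs i+1$), I first reduce to $\Depth N \gs 1$ by splitting off $H^0_\m(N)$: the corresponding Ext term has length at most $\beta_i(M)\cdot\lambda(H^0_\m(N))\cdot q^\alpha = o(q^{i+1+\alpha})$. Assuming $\Depth N \gs 1$, prime avoidance provides a non-zero-divisor $x \in \m$ on $N$. Multiplication by $x$ in $N$ defines an $R$-linear injective map $m_x : \eee{N} \to \eee{N}$, $y \mapsto xy$, yielding the short exact sequence
\[
0 \to \eee{N} \xrightarrow{m_x} \eee{N} \to \eee{(N/xN)} \to 0.
\]
Its long exact sequence for $\Ext_R^\bullet(M,-)$ contains
\[
\Ext_R^{i-1}(M,\eee{(N/xN)}) \to \Ext_R^i(M,\eee{N}) \xrightarrow{\phi} \Ext_R^i(M,\eee{N}),
\]
with $\phi$ the functorial endomorphism induced by $m_x$.

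The crux is that $\phi$ is nilpotent of index at most $kq$. Indeed, $\phi^{kq}$ is induced by $m_x^{kq}=m_{x^{kq}}$, and the map $y \mapsto x^{kq}y$ on $\eee{N}$ coincides with $R$-scalar multiplication by $x^k$, because by definition of the Frobenius-twisted action one has $x^k \cdot_e y = x^{kq}y$. Thus $\phi^{kq}$ equals the action of $x^k \in \m^k \subseteq \Ann_R(M)$ on $\Ext^i_R(M,\eee{N})$, which is zero. A standard filtration of $\Ext^i_R(M,\eee{N})$ by the iterated images $\IM(\phi^j)$ now gives $\lambda(\Ext^i_R(M,\eee{N})) \ls kq \cdot \lambda(\ker \phi) \ls kq \cdot \lambda(\Ext_R^{i-1}(M,\eee{(N/xN)}))$. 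Applying the inductive hypothesis to $N/xN$ (finitely generated, $\dim R \gs i$ still) gives $\lambda(\Ext_R^{i-1}(M,\eee{(N/xN)})) = o(q^{i+\alpha})$, and therefore $\lambda(\Ext^i_R(M,\eee{N})) = o(q^{i+1+\alpha})$. The main obstacle is precisely this nilpotency computation: the factor of $q$ in $\phi^{kq}=0$ arises because the $R$-action on $\eee{N}$ is twisted by Frobenius, and it is exactly this extra $q$ that allows the induction to propagate cleanly from denominator $q^{i+\alpha}$ to $q^{i+1+\alpha}$ at each step.
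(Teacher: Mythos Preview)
Your argument is correct and takes a genuinely different route from the paper's. One small bookkeeping slip in the base case: the socle $0:_{\eee N}\m$, viewed as a subset of $N$, is $0:_N\m^{[q]}\subseteq H^0_\m(N)$, but its $R$-length in the twisted structure carries an extra factor $q^\alpha$, so the correct bound is $\lambda(M)\cdot q^\alpha\cdot\lambda(H^0_\m(N))$ rather than $\lambda(M)\cdot\lambda(H^0_\m(N))$. This is harmless, since after dividing by $q^{1+\alpha}$ the limit is still zero.

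The paper proceeds by induction on $\dim N$ rather than on $i$. It first uses a prime filtration to reduce to $N=T=R/\p$, then picks $x\in\Ann_R M\smallsetminus\p$ and applies the exact sequence $0\to\eee T\xrightarrow{\cdot x}\eee T\to\eee T/x(\eee T)\to 0$, where the middle map is $R$-scalar multiplication by $x$. Because $x$ annihilates $M$, the induced map on $\Ext^i_R(M,\eee T)$ is immediately zero, yielding $\lambda(\Ext^i_R(M,\eee T))\ls\lambda(\Ext^{i-1}_R(M,\eee T/x\eee T))$; the extra factor of $q$ then comes from a filtration of $\eee T/x\eee T$ with successive quotients $\eee(T/xT)$. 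You instead use the \emph{internal} multiplication $m_x:y\mapsto xy$, which is $R$-linear on $\eee N$ but not an $R$-scalar map; its cokernel is exactly $\eee(N/xN)$, with no filtration needed. The price is that the induced endomorphism $\phi$ of Ext is not zero, and your key observation that $m_x^q$ coincides with $R$-scalar multiplication by $x$ on $\eee N$ (hence $\phi^{kq}=0$) supplies the factor $q$ via the nilpotency index instead. Your approach avoids the prime filtration and the reduction to $R/\p$, and it isolates cleanly why the extra power of $q$ appears: it is exactly the gap between the internal and Frobenius-twisted $R$-actions on $\eee N$.
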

\begin{proof}
Our proof will be by induction on $n=\dim(N).$

If $n=0,$ we have that $h=\lambda(N)$ is finite.
There is a filtration
$$
0=N_0\subseteq N_1\subseteq \ldots \subseteq N_h=N
$$
such that $N_{j}/N_{j-1}\cong K$. From the short exact sequences $0\to N_{j-1}\to N_{j}\to K\to 0$, we have that
$$
\lambda(\Ext^i_R(M,{^e}N_j)))\ls \lambda(\Ext^i_R(K,{^e}N_{j-1}))+\lambda(\Ext^i_R(K,{^e}K)).
$$
Since
$$
\lim\limits_{e\to \infty} \frac{\lambda(\Ext^i_R(M,{^e}K))}{q^{(i+1+\alpha)}} =\lim\limits_{e\to\infty} \frac{q^{\alpha}\lambda(\Ext^i_R(M,K))}{q^{(i+1+\alpha)}} =\lim\limits_{e\to\infty} \frac{\lambda(\Ext^i_R(M,K))}{q^{(i+1)}} =0,
$$
we have that
$$
\lim\limits_{e\to \infty} \frac{\lambda(\Ext^i_R(M,{^e}N))}{q^{(i+1+\alpha)}}=0
$$
by an inductive argument.

Suppose that our claim is true for modules of dimension less or equal to $n-1.$
There is a filtration
$$
0=N_0\subseteq N_1\subseteq\ldots \subseteq N_h=N
$$
such that $N_{j}/N_{j-1}\cong R/\p_{j}$, where $\p_{j}\subset R$ is a prime ideal. From the short exact sequences $0\to N_{j-1}\to N_{j}\to R/\p_j\to 0$ we have that
$$
\lambda(\Ext^i_R(M,{^e}N_{j}))\ls \lambda(\Ext^i_R(M,{^e}N_{j-1}))+\lambda(\Ext^i_R(M,{^e}(R/\p_j))).
$$
It suffices to show that 
\begin{equation}\label{WTS Ind i}
\lim\limits_{e\to \infty} \frac{\lambda(\Ext^i_R(M,{^e}(R/\p_j)))}{q^{(i+1+\alpha)}}=0
\end{equation}
for primes $\p_j$ such that $\dim_R(R/\p_j)=n=\dim_R N.$
Let $T=R/\p_j.$ Let $x\in \Ann_R M \setminus \p_j$, which exists because $\dim_R T=\dim_R N>0=\dim_R M.$ 
We have a short exact sequence
\[
\xymatrixcolsep{5mm}
\xymatrixrowsep{2mm}
\xymatrix{
0 \ar[rr] && {^e}T  \ar[rr]^-{x} && {^e}T \ar[rr] && {^e}T/x({^e}T) \ar[rr] && 0,
}
\]
which induces a long exact sequence
\begin{equation}\label{Ind Ext}
\xymatrixcolsep{5mm}
\xymatrixrowsep{2mm}
\xymatrix{
\ldots \ar[r]& \Ext^i_R(M,{^e}T) \ar[rr]^-{0} && \Ext^i_R(M,{^e}T) \ar[rr] &&  \Ext^i_R(M,{^e}T/x({^e}T))\ar[r] & \ldots.
}
\end{equation}
Then, for every $i$,
$$
\lambda(\Ext^i_R(M,{^e}T))\ls\lambda(\Ext^{i-1}_R(M,{^e}T/x({^e}T))).
$$
We have a filtration 
$$
0=L_0\subseteq L_1\subseteq \ldots \subseteq L_{q^{\alpha}}={^e}T/x({^e}T)
$$
such that $L_{r+1}/L_r={^e}(T/xT)$ because $x$ is not a zero divisor of $T.$
From the induced long exact sequence by $\Ext^i_R(M,-),$
we have that
$$\lambda(\Ext^i_R(M,{^e}T/x({^e}T)))\ls q^\alpha
\lambda(\Ext^i_R(M,{^e}(T/xT)))).$$ 
Therefore
\begin{align*}
\lim\limits_{e\to\infty}\frac{\lambda(\Ext^{i}_R(M,{^e}T))}{q^{(i+1+\alpha)}}
&\ls\lim\limits_{e\to\infty}\frac{\lambda(\Ext^{i-1}_R(M,{^e}T/x({^e}T)))}{q^{(i+1+\alpha)}}\\
&\ls\lim\limits_{e\to\infty}\frac{q^{\alpha}\lambda(\Ext^{i-1}_R(M,{^e}(T/xT)))}{q^{(i+1+\alpha)}}\\
&=\lim\limits_{e\to\infty}\frac{\lambda(\Ext^{i-1}_R(K,{^e}(T/xT))))}{q^{(i+\alpha)}}\\
&=0\hbox{ because }\dim T/xT=n-1.
\end{align*}
\end{proof}

\begin{corollary} \label{F-contributor}
Let $(R,\m,K)$ be a local ring of characteristic $p>0$. Let $N$ be a finitely generated $R$-module, and let $C$ be an $R$-module such that, for all $e \gg 0$, $C^{\theta_e}$ is a direct summand of ${^e}N$ for some $\theta_e \in \NN$. Assume that $\theta=\limsup_{e\to\infty}\frac{\theta_e}{q^{(d+\alpha)}} > 0$. Then, for all $R$-modules $M$ of finite length, and all integers $i$, we have
$$\mu^F_i(M,N)\gs\theta \cdot \lambda(\Ext^i_R(M,C)).$$
In particular, $C$ is a maximal Cohen-Macaulay module.
\end{corollary}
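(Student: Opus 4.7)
The plan is to exploit the direct summand hypothesis at the level of $\Ext$. For each $e \gg 0$, we can write $\eee N \cong C^{\theta_e} \oplus W_e$ for some $R$-module $W_e$. Since $\Ext^i_R(M,-)$ commutes with finite direct sums, this yields
$$\Ext^i_R(M, \eee N) \;\cong\; \Ext^i_R(M, C)^{\theta_e} \,\oplus\, \Ext^i_R(M, W_e),$$
and in particular
$$\lambda\!\left(\Ext^i_R(M, \eee N)\right) \;\gs\; \theta_e \cdot \lambda\!\left(\Ext^i_R(M, C)\right).$$
Note also that since $\theta > 0$ we have $\theta_e \gs 1$ for infinitely many $e$, so $C$ is a direct summand of the finitely generated module $\eee N$ for such $e$, hence finitely generated; this ensures $\Ext^i_R(M,C)$ has finite length whenever $M$ has finite length.

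Dividing the displayed inequality by $q^{(d+\alpha)}$ and taking $\limsup$ as $e \to \infty$, the left-hand side converges to $\mu^F_i(M,N)$ by Proposition \ref{Existence Left}, while the right-hand side has $\limsup$ equal to $\theta \cdot \lambda(\Ext^i_R(M,C))$ (since this factor does not depend on $e$). Therefore
$$\mu^F_i(M,N) \;\gs\; \theta \cdot \lambda\!\left(\Ext^i_R(M,C)\right),$$
which is the main inequality.

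For the ``in particular'' statement, apply this bound together with Theorem \ref{Thm Bass}: for every $i < d$ and every finite length module $M$ we have $\mu^F_i(M,N) = 0$, so
$$0 \;=\; \mu^F_i(M,N) \;\gs\; \theta \cdot \lambda\!\left(\Ext^i_R(M,C)\right).$$
Since $\theta > 0$, this forces $\Ext^i_R(M,C) = 0$ for all $i < d$ and all $M$ of finite length. Specializing to $M = K$ gives $\Ext^i_R(K,C) = 0$ for $i < d$, so $\Depth(C) \gs d$, and $C$ is maximal Cohen-Macaulay. The only subtle step is justifying that the $\limsup$ passes through the inequality cleanly; this is the main (mild) obstacle, and it is handled by the fact that the left-hand side is a genuine limit by Proposition \ref{Existence Left}, whereas only the liminf behavior of $\theta_e/q^{(d+\alpha)}$ would be needed, and $\limsup$ dominates that.
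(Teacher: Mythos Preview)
Your proof is correct and follows essentially the same argument as the paper: bound $\lambda(\Ext^i_R(M,\eee N))$ below by $\theta_e \cdot \lambda(\Ext^i_R(M,C))$ via the direct summand, divide by $q^{(d+\alpha)}$ and pass to the limit/limsup, then apply Theorem \ref{Thm Bass} with $M=K$ for the final claim. Your closing remark about $\liminf$ is a bit muddled (it is precisely the $\limsup$ of $\theta_e/q^{(d+\alpha)}$ that one needs, and the inequality $\lim a_e \gs \limsup b_e$ holds whenever $a_e \gs b_e$ and $\lim a_e$ exists), but this does not affect the validity of the argument.
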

\begin{proof}
We have 
\[
\ds  \mu^F_{i}(M,N) = \lim_{e \to \infty} \frac{\lambda(\Ext^{i}_R(M,{^e}N))}{q^{(d+\alpha)}} \gs\limsup_{e \to \infty} \frac{\theta_e \cdot \lambda(\Ext^{i}_R(M,C))}{q^{(d+\alpha)}} = \theta \cdot \lambda(\Ext^{i}_R(M,C)).
\]

Using $M=K$ in Theorem \ref{Thm Bass}, we obtain that $\mu^F_{i}(K,N)=0$ for all $i<d$. It follows from the inequality that $\Ext^i_R(K,C)=0$ for all $i<d$, and then $C$ is a maximal Cohen-Macaulay module.
\end{proof}

\begin{remark}\label{Yao}
Let $(R,\m,K)$ be a local ring of characteristic $p>0,$
and $N$ be a finitely generated $R$-module. We say that an $R$-module $C$ is an $F$-contributor of $N$ if $C^{\theta_e}$ is a direct summand of $\eee N$  for $e \gg 0$, and $\limsup_{e\to\infty}\frac{\theta_e}{q^{(d+\alpha)}} > 0$ \cite{YaoFcontributors}. Corollary \ref{F-contributor} shows that every $F$-contributor is a maximal Cohen-Macaulay module. This was already noted by Yao \cite[Lemma 2.2]{YaoFcontributors} when $N$ has finite $F$-representation type. 
\end{remark}

The following proposition shows that taking limits with respect to $\Tor$ or $\Ext$ give the same invariants up to a shift in the homological degrees.

\begin{proposition}\label{HHK Tor=Ext}
Let $(R,\m,K)$ be a local ring of characteristic $p>0,$
and $M$ be an $R$-module of finite length.
Then
$$
\beta^F_i(M,R)=\mu^F_{d+i}(M,R)
$$
for every $i\in\NN.$
\end{proposition}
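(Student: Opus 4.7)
The plan is to combine local duality with the Frobenius-invariance of the dualizing complex to convert the asymptotics of $\Ext^{d+i}_R(M,\eee R)$ into those of $\Tor^R_i(M,\eee\omega_R)$, and then to invoke Proposition \ref{PropAdditive}. First I pass to the completion of $R$ (which preserves lengths and $\Tor/\Ext$ of finite length modules), so that $R$ admits a dualizing complex $D^\bullet$ in the normalization of Remark \ref{RemDualComplex}. Write $(-)^\vee = \Hom_R(-, E_R(K))$ for Matlis duality, so that $\lambda(-) = \lambda((-)^\vee)$ on finite length modules.

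The key identification goes as follows. Since $M$ is $\m$-torsion, $\mathbf{R}\Hom_R(M,\eee R)\simeq \mathbf{R}\Hom_R(M,\mathbf{R}\Gamma_\m(\eee R))$. In the paper's normalization, local duality reads $\mathbf{R}\Gamma_\m(\eee R)\simeq \mathbf{R}\Hom_R(\eee R,D^\bullet)^\vee[-d]$; and since $R$ is $F$-finite, Grothendieck duality for the finite Frobenius gives $\mathbf{R}\Hom_R(\eee R,D^\bullet)\simeq \eee D^\bullet$ in the derived category (this is the derived analogue of the familiar Cohen--Macaulay identity $\Hom_R(\eee R,\omega_R)\cong \eee\omega_R$). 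Combining these with tensor--Hom adjunction for Matlis duality and reading off cohomology in degree $d+i$ produces
\[
\lambda\bigl(\Ext^{d+i}_R(M,\eee R)\bigr) \;=\; \lambda\bigl(H^{-i}(M\otimes^{\mathbf{L}}_R \eee D^\bullet)\bigr).
\]
To reduce the hyper-Tor on the right to an ordinary $\Tor$, I will use the hyperhomology spectral sequence $E^2_{s,t}=\Tor^R_s(M,\eee H^t(D^\bullet))\Rightarrow H^{t-s}(M\otimes^{\mathbf{L}}_R\eee D^\bullet)$. Grothendieck's bound $\dim H^t(D^\bullet)\ls d-t$ gives $\dim H^t(D^\bullet)<d$ for every $t\gs 1$, so Proposition \ref{PropAdditive} forces
\[
\lim\limits_{e\to\infty}\frac{\lambda\bigl(\Tor^R_s(M,\eee H^t(D^\bullet))\bigr)}{q^{d+\alpha}}=0
\]
for every $s$ and every $t\gs 1$. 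Hence only the $t=0$ term survives in the limit, yielding $\mu^F_{d+i}(M,R)=\beta^F_i(M,\omega_R)$, where $\omega_R := H^0(D^\bullet)$.

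To conclude, I observe that for each minimal prime $\p$ with $\dim R/\p = d$, the ring $R_\p$ is Artinian and Remark \ref{RemDualComplex} identifies $(\omega_R)_\p\cong E_{R_\p}(R_\p/\p R_\p)$, whose $R_\p$-length equals $\lambda_{R_\p}(R_\p)$. Proposition \ref{PropAdditive} therefore yields $\beta^F_i(M,\omega_R)=\beta^F_i(M,R)$, which finishes the argument. I expect the hardest step to be justifying the derived-category isomorphism $\mathbf{R}\Hom_R(\eee R,D^\bullet)\simeq \eee D^\bullet$ outside the Cohen--Macaulay setting, and keeping careful track of the shifts and signs in the paper's (non-standard) normalization of $D^\bullet$ throughout the local duality identification.
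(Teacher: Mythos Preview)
Your argument is correct and follows the same architecture as the paper's proof: pass to the completion, relate the $\Tor$ and $\Ext$ asymptotics through the dualizing complex, and then finish with Proposition \ref{PropAdditive} and the equality $\lambda_{R_\p}((\omega_R)_\p)=\lambda_{R_\p}(R_\p)$ for $\p\in\Lambda$. The difference is only in the middle step: the paper invokes \cite[Proposition 2.3(2)]{Chang} as a black box to obtain $\beta^F_i(M,R)=\mu^F_{d+i}(M,H^0(D^\bullet_R))$, whereas you prove the mirror identity $\mu^F_{d+i}(M,R)=\beta^F_i(M,H^0(D^\bullet_R))$ directly via local duality, Grothendieck duality for the finite Frobenius $\mathbf{R}\Hom_R(\eee R,D^\bullet)\simeq \eee D^\bullet$, and the hyperhomology spectral sequence. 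Your route is more self-contained; the only point that deserves a line of extra care is that ``only the $t=0$ term survives'' requires controlling the \emph{differentials} into the $t=0$ line, not just the $E_2$-terms off that line---but since $D^\bullet$ is bounded the spectral sequence degenerates at a finite page, and each such differential factors through a subquotient of some $\Tor^R_s(M,\eee H^t(D^\bullet))$ with $t\gs 1$, so this is immediate.
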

\begin{proof}
Since $\beta^F_i(M,R)$ and $\mu^F_{d+i}(M,R)$ are not affected by completion at $\m$, we may assume that $R$ is a complete local ring. In this case, $R$ has a dualizing complex $D_R^\bullet$ by Remark \ref{RemDualComplex}. We have that 
$$
\beta^F_i(M,R)=\mu^{F}_{d+i}(M,H^0(D_R^\bullet))
$$
by \cite[Proposition 2.3(2)]{Chang}. 
Let $\Lambda$ be the set of all prime ideals of $R$ such that $\dim R/\p=\dim R.$ Let $\p\in\Lambda.$
We have that $(H^0(D_R^\bullet))_\p=H^0(D_{R_\p}^\bullet)=\omega_{R_\p}$ by Remark \ref{RemDualComplex}.
We have that
$\omega_{R_\p}=\Hom_{R_\p}(R_\p,E_{R_\p}(R_\p/\p R_\p))$ and $\lambda_{R_\p}(\omega_{R_\p})=\lambda_{R_\p}(R_\p).$ Finally, by Proposition \ref{PropAdditive}
\begin{align*}
\mu^F_{d+i}(M,H^0(D_R^\bullet)) &=\sum_{\p\in\Lambda} \mu^F_{d+i}(M,R/\p)\lambda_{R_\p}(H^0(D_{R_\p}^\bullet))\\
&=\sum_{\p\in\Lambda} \mu^F_{d+i}(M,R/\p)\lambda_{R_\p}(\omega_{R_\p})\\
&=\sum_{\p\in\Lambda} \mu^F_{d+i}(M,R/\p)\lambda_{R_\p}(R_\p)\\
&=\mu^F_{d+i}(M,R).
\end{align*}
\end{proof}

\begin{remark} If $R$ itself has an $F$-contributor $C$, then we get a relation involving the $\beta_i^F$'s. In fact, by Proposition \ref{HHK Tor=Ext}, we have $\beta_i^F(M,R) = \mu_{d+i}^F(M,R)$ for all $i \in \NN$. Thus, in the notation of Corollary \ref{F-contributor}, we have $\beta_i^F(M,R) \gs\theta \cdot \lambda(\Ext^{d+i}_R(M,C))$.
\end{remark}
We end this section with a proposition that shows how $\beta^F_i(M,N)$ behaves under some flat ring extensions. First, we need a different way to compute $\beta^F_i(M,N)$.
\begin{remark}\label{OtherDef}
Let $(R,\m,K)$ be a local ring of characteristic $p>0$, let $M$ be an $R$-module of finite length, and let $N$ be a finitely generated $R$-module. Let $G_\bullet=(G_j,\varphi_j)_{j \gs 0}$ denote a minimal free resolution of $M$. Let $G^{[q]}_\bullet$ be the complex $(G_j,\varphi^{[q]}_j)_{j \gs 0}$, where the matrix of $\varphi^{[q]}_j$ has as entries the $q$-th powers of the entries in the matrix of $\varphi_j$. 
We have that
$$
\lambda(\Tor^R_i(M, {^e}N))=q^\alpha \lambda(H_i(G^{[q]}_\bullet\otimes_R N)). 
$$
Hence, 
$$
\beta^F_i(M,N)=
\lim\limits_{q\to \infty} 
\frac{\lambda(H_i(G^{[q]}_\bullet\otimes_R N))}{q^d}.
$$ 

\end{remark}
\begin{proposition}\label{Extension}
Let $(R,\m,K)\to (S,\n,L)$ be a flat extension of two $F$-finite local rings of characteristic $p>0.$ Let $M$ be a finite length $R$-module.
Let $\alpha=\log_p[K:K^p]$ and $\theta=\log_p[L:L^p].$
Suppose that $\m S=\n.$ Then,
$$
\beta^F_{i,R}(M,R)=\lim\limits_{e\to \infty} \frac{\lambda(\Tor^R_{i} (M, {^e}R))}{p^{e(d+\alpha)}}=\lim\limits_{e\to \infty} \frac{\lambda(\Tor^S_{i} (M\otimes_R S, {^e}S))}{p^{e(d+\theta)}}=\beta^F_{i,S}(M\otimes_R S,S).
$$ 
In particular, we have that $\beta_{i,R}^F(M,R) = \beta_{i,\widehat{R}}^F(\widehat{M},\widehat{R})$.
\end{proposition}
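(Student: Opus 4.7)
The plan is to use the reformulation of $\beta^F_i$ from Remark \ref{OtherDef}, which replaces the limit involving $\Tor^R_i(M, {^e}R)$ by a limit involving the homology of the Frobenius-twisted minimal free resolution $G^{[q]}_\bullet$. The key advantage of this reformulation is that the $q^\alpha$ factor has already been absorbed, so the limit only involves the factor $q^d$; since $\m S = \n$ forces the fiber $S/\m S = L$ to be zero-dimensional and hence $\dim S = \dim R = d$, matching the exponents over $R$ and over $S$ becomes automatic, and the problem reduces to comparing lengths of certain $R$- and $S$-modules.

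First I would fix a minimal free resolution $G_\bullet = (G_j,\varphi_j)$ of $M$ over $R$ and verify that $G_\bullet \otimes_R S$ is a minimal free resolution of $M \otimes_R S$ over $S$: exactness follows from flatness of $R \to S$, and minimality holds because the entries of the differentials lie in $\m S \subseteq \n$. Next I would check the identification $(G_\bullet \otimes_R S)^{[q]} = G^{[q]}_\bullet \otimes_R S$ at the level of complexes, which is immediate because the Frobenius twist is defined by raising matrix entries to the $q$-th power, and this operation commutes with extension of scalars. Flatness of $S$ then gives $H_i(G^{[q]}_\bullet \otimes_R S) \cong H_i(G^{[q]}_\bullet) \otimes_R S$, and Remark \ref{OtherDef} applied over $S$ expresses
$$
\beta^F_{i,S}(M \otimes_R S, S) = \lim_{q \to \infty} \frac{\lambda_S\bigl(H_i(G^{[q]}_\bullet) \otimes_R S\bigr)}{q^d}.
$$

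The main technical step, and the one I expect to require the most care, is the length-preservation statement $\lambda_S(T \otimes_R S) = \lambda_R(T)$ for every finite length $R$-module $T$. For this I would take a composition series of $T$ with factors all isomorphic to $K$, tensor with $S$ (using flatness to keep the filtration exact), and use the identification $K \otimes_R S = S/\m S = L$ together with $\lambda_S(L) = 1$. This applies to $T = H_i(G^{[q]}_\bullet)$, which has finite length over $R$: since $M$ is annihilated by some power of $\m$, so is $\Tor_i^R(M, {^e}R)$, and hence also $H_i(G^{[q]}_\bullet)$ by Remark \ref{OtherDef}. Combining this length equality with the previous step yields $\beta^F_{i,R}(M, R) = \beta^F_{i,S}(M \otimes_R S, S)$.

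For the ``in particular'' statement I would apply the general result with $S = \widehat R$, which is flat over $R$ with $\m \widehat R = \widehat \m$, inherits the $F$-finite property from $R$, and satisfies $M \otimes_R \widehat R = \widehat M$ because $M$ is finitely generated.
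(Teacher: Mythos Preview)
Your proposal is correct and follows essentially the same approach as the paper: both use Remark \ref{OtherDef} to pass to the complex $G^{[q]}_\bullet$, identify $(G_\bullet \otimes_R S)^{[q]}$ with $G^{[q]}_\bullet \otimes_R S$, and invoke flatness together with $\m S=\n$ to match lengths over $R$ and $S$. Your write-up is in fact a bit more explicit than the paper's (you spell out why $\dim S=d$, why $G_\bullet\otimes_R S$ is minimal, and why $\lambda_S(T\otimes_R S)=\lambda_R(T)$), but the underlying argument is the same.
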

\begin{proof}
Let $q=p^e$. We have that
\begin{align*}
\frac{\lambda_R(\Tor^R_i(M, {^e}R))}{q^{\alpha}}
&=\lambda_R(H_i(G^{[q]}_\bullet))\hbox{ by Remark \ref{OtherDef}.}\\
&=\lambda_S(H_i(G^{[q]}_\bullet\otimes_R S ))\hbox{ because }S\hbox{ is flat and }\m S=\n.\\
&=\lambda_S(H_i((G_\bullet\otimes_R S)^{[q]}))\hbox{ because }G_\bullet\hbox{ is free}.\\
&=\frac{\lambda_S(\Tor^S_i(M\otimes_R S, {^e}S)}{q^{\theta}}\hbox{ by Remark \ref{OtherDef} and because $S$ is flat.}
\end{align*}
After dividing by $q^d$ and taking limits, we have that 
$$
\beta^F_{i,R}(M,R)=\beta^F_{i,S}(M\otimes_R S, S).
$$ 
\end{proof}

\section{Relations with projective dimension}\label{SecProjDim}

Let $(R,\m,K)$ be a local $F$-finite ring of characteristic $p>0$,
and let $M$ be an $R$-module of finite length. In this section we investigate when the vanishing of $\beta^F_i(M,R)$ detects whether $M$ has finite projective dimension.

We first recall known results in this direction. We have that $R$ is a regular ring if and only if $\beta_i^F(R) = \beta_i^F(K,R)=0$ for some $i\gs 1$ \cite[Corollary 3.2]{AberbachLi}. Let $M$ be a finitely generated $R$-module. If $M$ has finite projective dimension, then $\Tor_i^R(M,{^e}R)=0$ for all $i> 0$ and all $e \gs 0$ \cite[Th\'eor\`eme 1.7]{P-S}. Conversely, if $\Tor_i^R(M,{^e}R)=0$ for infinitely many $e$ and all $i> 0$, then $M$ has finite projective dimension \cite[Theorem $3.1$]{Herzog}. In fact, even more is true: if $\Tor_i^R(M,\eee R) = 0$ for $\Depth(R)+1$ consecutive values of $i$ and some $e \gg 0$, then $M$ has finite projective dimension \cite[Proposition 2.6]{KohLee} (see also \cite[Theorem 2.2.8]{MillerSurvey}). Now, suppose that $R$ is a complete intersection. If $\beta^F_i(M,R)=0$ for some $i >0$, then $M$ has finite projective dimension by \cite[Corollary 2.5]{ClaudiaMillerMathZ} (see also \cite[Corollary 4.11]{DaoSmirnov}).

\begin{proposition}
Let $(R,\m,K)$ be a local ring of characteristic $p>0$, and let $M$ be an $R$-module of finite length. Suppose that there is a regular local ring $(A,\n,L)$ and a map of local rings $\phi:R\to A$ such that $A$ is finitely generated as an $R$-module, and $\dim A=d$. If $$\beta^F_j(M,R)=\beta^F_{j+1}(M,R)=\ldots=\beta^F_{j+d}(M,R)=0,$$
then $M$ has finite projective dimension.
\end{proposition}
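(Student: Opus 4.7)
The plan is to translate the hypothesis on the Frobenius Betti numbers into the vanishing of $\Tor^R_i(M, A)$, and then apply a change-of-rings spectral sequence to obtain finite projective dimension. Write $L$ for the residue field of $A$.

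By Remark~\ref{RemAdditive}, the hypothesis $\beta_i^F(M, R) = 0$ forces $\beta_i^F(M, A) = 0$ for each $i \in \{j, \ldots, j+d\}$. To compute $\beta_i^F(M, A)$, note that since $R$ is $F$-finite and $A$ is module-finite over $R$, the ring $A$ is itself $F$-finite; combined with the regularity of $A$, Kunz's theorem gives $\eee{A} \cong A^{q^{d+\theta}}$ as $A$-modules, where $\theta = \log_p[L:L^p]$. This isomorphism descends through restriction of scalars along $\phi$, yielding $\lambda(\Tor^R_i(M, \eee{A})) = q^{d+\theta}\lambda(\Tor^R_i(M, A))$. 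The finiteness of $L/K$ gives the standard identity $[L:L^p] = [K:K^p]$, so $\theta = \alpha$; dividing by $q^{d+\alpha}$ and passing to the limit yields $\beta_i^F(M, A) = \lambda(\Tor^R_i(M, A))$. Hence $\Tor^R_i(M, A) = 0$ for $i = j, \ldots, j+d$.

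If $j = 0$, then $M \otimes_R A = 0$, so $M \otimes_R L = 0$, and Nakayama forces $M = 0$. Assume $j \gs 1$ and set $N = \Omega_{j-1}(M)$. Dimension shifting in the minimal free resolution gives $\Tor^R_s(N, A) = \Tor^R_{s+j-1}(M, A) = 0$ for $s = 1, \ldots, d+1$, and it suffices to show $\pd_R N \ls d$, which will yield $\pd_R M \ls d + j - 1$. For this I would use the change-of-rings spectral sequence
\[
E^2_{p, q} = \Tor^A_p(\Tor^R_q(N, A), L) \Longrightarrow \Tor^R_{p+q}(N, L).
\]
On the antidiagonal $p + q = d + 1$, each $E^2_{p, q}$ with $q \in \{1, \ldots, d+1\}$ vanishes by the Tor-vanishing just established, while $E^2_{d+1, 0} = \Tor^A_{d+1}(N \otimes_R A, L) = 0$ since $A$ is regular of dimension $d$ and hence $\pd_A(N \otimes_R A) \ls d$. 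Therefore $\Tor^R_{d+1}(N, L) = 0$, and flat base change along $K \to L$ identifies this group with $\Tor^R_{d+1}(N, K) \otimes_K L \cong L^{\beta_{d+1}(N)}$, forcing $\beta_{d+1}(N) = 0$.

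The main delicate point is the identification $\beta_i^F(M, A) = \lambda(\Tor^R_i(M, A))$, which relies on Kunz's description of $\eee{A}$ together with the residue-field identity $\theta = \alpha$ to produce the exact normalization needed; the rest is a standard spectral-sequence argument exploiting the regularity of $A$.
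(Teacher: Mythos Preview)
Your argument is correct, and the first half coincides with the paper's: both proofs invoke Remark~\ref{RemAdditive} together with Kunz's theorem and the equality $[L:L^p]=[K:K^p]$ to deduce $\beta_i^F(M,A)=\lambda(\Tor_i^R(M,A))$, hence $\Tor_i^R(M,A)=0$ for $i=j,\ldots,j+d$. The divergence is in how one passes from this to $\Tor_{j+d}^R(M,L)=0$. The paper avoids spectral sequences entirely: it runs an elementary induction on a regular system of parameters $x_1,\ldots,x_d$ for $\n$, using the short exact sequences $0\to A/I_{r-1}\xrightarrow{x_r}A/I_{r-1}\to A/I_r\to 0$ (with $I_r=(x_1,\ldots,x_r)$) to show that $\Tor_{j+r}^R(M,A/I_r)=\cdots=\Tor_{j+d}^R(M,A/I_r)=0$, so that at $r=d$ one has $\Tor_{j+d}^R(M,L)=0$. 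Your change-of-rings spectral sequence reaches the same conclusion (after the syzygy shift, $\Tor_{d+1}^R(\Omega_{j-1}M,L)=\Tor_{j+d}^R(M,L)$) by observing that every $E^2$ term on the relevant antidiagonal vanishes. Both then finish identically via $[L:K]<\infty$. Your route is more structural and packages the ``lose one Tor per parameter'' phenomenon in one stroke; the paper's induction is more elementary and self-contained, requiring no homological machinery beyond long exact sequences. One small remark: your phrase ``flat base change along $K\to L$'' is slightly off---what you are really using is that $L$ is free over $K$, so $\Tor_i^R(N,L)\cong\Tor_i^R(N,K)^{[L:K]}$---but the conclusion is of course the same.
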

\begin{proof}
We note that $\log_p[L:L^p] = \log_p[K:K^p]=\alpha < \infty$, and so, $A$ is $F$-finite. Since $A$ is regular and local, ${^e}A \cong \bigoplus^{q^{(d+\alpha)}} A$. Let $x_1,\ldots,x_d\in A$ be a set of generators for $\n$, and let $I_r:=(x_1,\ldots,x_r)A$. By induction on $r$ we will show that 
\begin{equation}\label{Eq t reg}
\Tor_{j+r}^R(M,A/I_r)=\ldots=\Tor_{j+d}^R(M,A/I_r)=0
\end{equation}
for every $r$.
If $r=0$, we have that  $\Tor^R_i(M, {^e}A)=\oplus^{q^{(d+\alpha)}} \Tor^R_i(M,A)$ for every $i\in\NN$. Then, $\lambda(\Tor^R_i(M, {^e}A))=q^{(d+\alpha)}\lambda(\Tor^R_i(M,A))$, and thus
\[
\ds \beta^F_i(M,A)=\lambda(\Tor^R_i(M,A)).
\]
Since $A$ is finitely generated, and since $\beta^F_i(M,R)=0$ for $i=j,\ldots,j+d$ by assumption, we have that $\beta^F_{j}(M,A)=\ldots =\beta^F_{j+d}(M,A)=0$ by Remark \ref{RemAdditive}. Hence, $\Tor_{j}^R(M,A)=\ldots=\Tor_{j+d}^R(M,A)=0$.
We suppose that (\ref{Eq t reg}) holds for $r-1$ and prove it for $r$. We have a short exact sequence
\[
\xymatrixcolsep{5mm}
\xymatrixrowsep{2mm}
\xymatrix{
0 \ar[rr] && A/I_{r-1} \ar[rr]^-{x_r} && A/I_{r-1}\ar[rr]&& A/I_{r}\ar[rr] && 0.
}
\]
This induces a long exact sequence
\[
\xymatrixcolsep{5mm}
\xymatrixrowsep{2mm}
\xymatrix{
\ldots \ar[r]& \Tor^R_i(M,A/I_{r-1}) \ar[r]^-{x_r} & \Tor^R_i(M,A/I_{r-1})\ar[r] & \Tor^R_i(M,A/I_{r})\ar[r]& \Tor^R_{i-1}(M,A/I_{r-1})\ar[r]& \ldots.
}
\]
Since $\Tor_{j+r-1}^R(M,A/I_{r-1})=\ldots=\Tor_{j+d}^R(M,A/I_{r-1})=0$, we have that $\Tor_{j+r}^R(M,A/I_r)=\ldots=\Tor_{j+d}^R(M,A/I_r)=0$, proving the claim. In particular, we get $\Tor^R_{j+d}(M,A/I_d)=0$. Since $L=A/I_d$ is a finite field extension of $K$, we have
\[
\ds 0=\lambda(\Tor^R_{j+d}(M,A/I_d))=[L:K]\cdot\lambda(\Tor^R_{j+d}(M,K)).
\]
Therefore, $\Tor^R_{j+d}(M,K)=0$ and $M$ has finite projective dimension.
\end{proof}

\begin{lemma}\label{ProjDim Cont} 
Let $(R,\m,K)$ be a local ring of characteristic $p>0.$ 
Suppose that there is an $R$-module $N$ of dimension $d$ that has an $F$-contributor $C$.
Let $M$ be an $R$-module of finite length. If $\beta^F_i(M,N)=0,$ then $\Tor_i^R(M,{^e}C)=0$ for every $e\gs 0$. In particular, if $R$ is strongly $F$-regular of positive dimension $d$, and $\beta^F_i(M,R)=0$ for $d$ consecutive values of $i$, then $M$ has finite projective dimension.
\end{lemma}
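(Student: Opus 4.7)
The plan is to prove the two assertions sequentially. First, I would bootstrap the $F$-contributor decomposition to propagate the vanishing of $\beta_i^F(M,N)$ to each Frobenius twist ${}^e C$; then I would specialize to $N=C=R$, using that the $F$-signature is positive in the strongly $F$-regular case, to reduce the second statement to a known Frobenius rigidity theorem for projective dimension.

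For the first assertion, fix $e \gs 0$. The $F$-contributor hypothesis supplies a splitting ${}^{e'}N \cong C^{\theta_{e'}} \oplus L_{e'}$ for every $e' \gg 0$. Restricting scalars via $F^e$, which commutes with direct sums and satisfies ${}^{e}({}^{e'}N)={}^{e+e'}N$, realizes $({}^e C)^{\theta_{e'}}$ as a direct summand of ${}^{e+e'}N$. Applying $\Tor_i^R(M,-)$ then yields the numerical inequality
\[
\lambda\bigl(\Tor_i^R(M,{}^{e+e'}N)\bigr) \gs \theta_{e'}\cdot \lambda\bigl(\Tor_i^R(M,{}^e C)\bigr).
\]
Dividing by $p^{(e+e')(d+\alpha)}$ and letting $e' \to \infty$ converts the left-hand side into $\beta_i^F(M,N)$ and the right-hand side into $\theta\cdot p^{-e(d+\alpha)}\cdot\lambda(\Tor_i^R(M,{}^e C))$. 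Since $\theta > 0$, the vanishing $\beta_i^F(M,N)=0$ forces $\Tor_i^R(M,{}^e C)=0$, as claimed.

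For the ``in particular'' statement, the key input is that every strongly $F$-regular ring has positive $F$-signature $s(R)$ by \cite[Theorem 0.2]{AberbachLeuschke}, so $R$ is itself an $F$-contributor of $R$ with $\theta=s(R)>0$. Applying the first part with $N=C=R$ translates the $d$ consecutive vanishings $\beta_{i_0}^F(M,R)=\ldots=\beta_{i_0+d-1}^F(M,R)=0$ into $\Tor_j^R(M,{}^e R)=0$ for every $j\in\{i_0,\ldots,i_0+d-1\}$ and every $e\gs 0$. Since strongly $F$-regular rings are Cohen-Macaulay, $\Depth R = d$, and the conclusion $\pd_R M<\infty$ should then follow from the Koh-Lee criterion \cite[Proposition 2.6]{KohLee} applied to a sufficiently large Frobenius twist.

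The main anticipated obstacle is the bookkeeping in Part 1: carefully tracking the two Frobenius exponents $e$ and $e'$, verifying that restriction of scalars by $F^e$ preserves the summand decomposition, and controlling the normalization so that the positive constant $\theta$ is not absorbed into the factor $p^{-e(d+\alpha)}$. A secondary concern is matching the ``$d$ consecutive'' count in the second part to the precise hypothesis of the Koh-Lee criterion; if the cited version actually requires $\Depth R + 1$ consecutive Tor vanishings for a single $e$, one would instead invoke a variant that trades one consecutive vanishing for simultaneous vanishing across infinitely many $e$ --- which is exactly what Part 1 furnishes, since it produces vanishing for all $e\gs 0$ at once.
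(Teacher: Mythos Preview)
Your proof is correct and follows essentially the same route as the paper's: bootstrap the summand $({}^eC)^{\theta_{e'}}$ of ${}^{e+e'}N$ to obtain the Tor inequality, divide and pass to the limit, then specialize to $N=C=R$ via positive $F$-signature and invoke Koh--Lee together with the Cohen--Macaulayness of strongly $F$-regular rings. Your worry about $d$ versus $\Depth(R)+1$ consecutive vanishings is legitimate; the paper addresses it not through your proposed trade-off with simultaneous vanishing for all $e$, but simply by citing a sharper version of the rigidity criterion (\cite[Theorem 2.2.11]{MillerSurvey}) alongside \cite[Proposition 2.6]{KohLee}.
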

\begin{proof}
 For $e' \gg 0$ and $q' = p^{e'}$,  we have that $C^{\theta_{e'}}$ is a direct summand of ${^{e'}} N$, for some $\theta_{e'} \in \NN$ such that $\limsup \frac{\theta_{e'}}{q'^{(d+\alpha)}}>0$. 
We note that $(\eee C)^{\theta_{e'}}$ is a direct summand of ${^{e+e'}}N$ for all $e \gs 0$. Then
\[
\ds \left(\limsup_{e' \to \infty} \frac{\theta_{e'}}{q'^{(d+\alpha)}} \right) \frac{\lambda(\Tor_i^R(M,\eee C))}{q^{(d+\alpha)}} \ls \lim\limits_{e' \to\infty} \frac{\lambda(\Tor_i^R(M,{^{e+e'}}N))}{qq'^{(d+\alpha)}} = \beta_i^F(M,N) = 0.
\]
It follows that $\Tor_i^R(M,{^{e}}C)=0$. If $R$ is strongly $F$-regular, then $R$ is an $F$-contributor of itself. In addition, $R$ is Cohen-Macaulay, and if $\Tor_i(M,{^e}R) = 0$ for $d$ consecutive values of $i$ and for $e\gg 0$, we have that $M$ has finite projective dimension \cite[Proposition 2.6]{KohLee} (see also \cite[Theorem 2.2.11]{MillerSurvey}).
\end{proof}

\begin{proposition} \label{int_closed} \cite[Corollary 3.3]{CHKV} 
Let $(R,\m,K)$ be a local ring, let $I$ be an integrally closed $\m$-primary ideal, and let $N$ be a finitely generated $R$-module. Then $\pd_R(N) < i$ if and only if $\Tor_i^R(N,R/I) = 0$.
\end{proposition}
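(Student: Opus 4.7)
The ``only if'' direction is immediate from the definition: if $\pd_R(N) < i$, then $\Tor_j^R(N,-)$ vanishes identically for all $j \gs i$, so in particular $\Tor_i^R(N, R/I) = 0$.

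For the converse, assume $\Tor_i^R(N, R/I) = 0$. Let $(F_\bullet, \varphi_\bullet)$ be a minimal free resolution of $N$; the aim is to show $F_i = 0$, which gives $\pd_R(N) < i$. Tensoring $F_\bullet$ with $R/I$ and computing homology at position $i$, the vanishing hypothesis together with the minimality of the resolution translates into the containment
\[
\varphi_i^{-1}(IF_{i-1}) \;\subseteq\; \varphi_{i+1}(F_{i+1}) + IF_i \;\subseteq\; \m F_i,
\]
since $\varphi_{i+1}(F_{i+1}) \subseteq \m F_i$ by minimality and $IF_i \subseteq \m F_i$.

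To bring the integrally closed hypothesis to bear, I would first reduce to the case where $K$ is infinite by passing to the faithfully flat extension $R \to R(t) = R[t]_{\m R[t]}$, which preserves $\m$-primariness, integral closure of ideals, Tor modules, and projective dimension. Then $I$ admits a minimal reduction $J = (x_1, \ldots, x_d)$ with $d = \dim R$, generated by a system of parameters, and $\overline{J} = I$ since $I$ is integrally closed.

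The crux is to upgrade the preimage containment $\varphi_i^{-1}(IF_{i-1}) \subseteq \m F_i$ to the statement $F_i \subseteq \m F_i$, which by Nakayama would force $F_i = 0$. Here the integral closure hypothesis must enter decisively: using that every element of $I$ satisfies an integral equation over $J$, one analyzes the Koszul complex on $x_1,\ldots,x_d$ and its interplay with $F_\bullet$, leveraging that the associated graded of $I$ with respect to the $J$-adic filtration behaves well under integral closure. The integral dependence equations of generators of $I$ over $J$ provide the rigidity needed to pass from a cycle condition modulo $I F_{i-1}$ to one modulo $\m F_{i-1}$, at the cost of controlled adjustments by elements of $\varphi_{i+1}(F_{i+1}) + IF_i$. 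This is where the CHKV argument performs its main work.

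I expect the decisive obstacle to be precisely this last upgrade step: translating the $I$-preimage containment into the full triviality of $F_i$, since the gap between $IF_{i-1}$ and $\m F_{i-1}$ is what the integral closure hypothesis is designed to close. Without integrally closed the implication fails in general, so the hypothesis must be used non-cosmetically to bridge these two levels of approximation inside the minimal resolution of $N$.
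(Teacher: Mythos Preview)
The paper does not prove this proposition; it is quoted as \cite[Corollary 3.3]{CHKV} and used as a black box, so there is no in-paper argument to compare your attempt against.

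Judged on its own, your proposal is a setup followed by an explicitly acknowledged gap, not a proof. The opening is fine: the ``only if'' direction is trivial, the reduction to infinite residue field is routine, and translating $\Tor_i^R(N,R/I)=0$ into $\varphi_i^{-1}(IF_{i-1}) \subseteq \m F_i$ is correct. But the sentence ``This is where the CHKV argument performs its main work'' carries the entire content of the result, and you supply none of it. Your references to the Koszul complex on a minimal reduction $J$ and to ``the associated graded of $I$ with respect to the $J$-adic filtration'' are not attached to any concrete statement or deduction; you never say what you would prove about these objects or how it forces $F_i = 0$. Two remarks on how the argument is actually organized. First, one dimension-shifts: $\Tor_i^R(N,R/I)\cong\Tor_1^R(\Omega_{i-1}(N),R/I)$, so it suffices to show that $\Tor_1^R(-,R/I)=0$ implies freeness; with a minimal presentation $0\to\Omega\to F\to\Omega_{i-1}\to 0$ and $\Omega\subseteq\m F$, the hypothesis is $\Omega\cap IF=I\Omega$ and the goal is $\Omega=0$. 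Second, the algebraic input that makes integral closure bite is a colon identity coming from the determinant trick, namely $\m I:_R\m=I$ (since $x\m\subseteq\m I$ forces $x\in\overline{I}=I$), or the closely related $\m$-fullness property; this, rather than Koszul or associated-graded machinery, is the lever the CHKV argument pulls. Your outline never isolates any such fact, which is why the ``upgrade'' step remains a hope rather than an argument.
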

In particular, Proposition \ref{int_closed} shows that, if $\Tor_i^R(R/I,{^e}R) = 0$ for some $e \gs 1$, then $R$ is regular \cite[Theorem 2.1]{Kunz}. We now present a similar result for Frobenius Betti numbers.

\begin{proposition}
Let $(R,\m,K)$ be a reduced local ring of characteristic $p>0$. Suppose that there exists an $R$-module $N$ of dimension $d$ that has an $F$-contributor $C$. If $I$ is an integrally closed $\m$-primary ideal such that $\beta^F_i(R/I,N)=0$ for some $i>0$, then $R$ is regular.
\end{proposition}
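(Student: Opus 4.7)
The plan is to combine Lemma \ref{ProjDim Cont}, Proposition \ref{int_closed}, and the Auslander-Buchsbaum formula to show that the $F$-contributor $C$ must itself be a free $R$-module; this will exhibit $R$ as an $F$-contributor of $N$, and then a second application of Proposition \ref{int_closed} (together with Kunz's theorem, cited just after it) will conclude that $R$ is regular.

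First I would apply Lemma \ref{ProjDim Cont} with $M = R/I$: since $\beta^F_i(R/I, N) = 0$, we obtain $\Tor^R_i(R/I, \eee C) = 0$ for every $e \gs 0$. Because $I$ is integrally closed and $\m$-primary, Proposition \ref{int_closed} then yields $\pd_R(\eee C) < i$, and in particular $\pd_R(\eee C) < \infty$, for every $e \gs 0$. Specializing to $e=0$, the module $C$ has finite projective dimension. By Corollary \ref{F-contributor}, $C$ is also maximal Cohen-Macaulay, so $\Depth(C) = d$. The Auslander-Buchsbaum formula then reads $\pd_R(C) + d = \Depth(R) \ls d$, which forces $\pd_R(C) = 0$ and $\Depth(R) = d$. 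Hence $R$ is Cohen-Macaulay and $C \cong R^t$ for some $t \gs 1$ (the inequality $t \gs 1$ coming from $C \ne 0$, which in turn follows from $\theta > 0$).

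Finally, the decomposition $\eee C \cong (\eee R)^t$ transfers the Tor vanishing to $\Tor^R_i(R/I, \eee R) = 0$ for every $e \gs 0$. Specializing to any $e \gs 1$, Proposition \ref{int_closed} (and the observation recorded right after it, which is Kunz's theorem) shows that $R$ is regular.

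The only step that is not pure bookkeeping is the Auslander-Buchsbaum argument that promotes ``$C$ of finite projective dimension and maximal Cohen-Macaulay'' into ``$C$ is free,'' which simultaneously forces $R$ to be Cohen-Macaulay; once this is in place, one simply reruns the first half of the argument with $R$ in place of $C$, and regularity follows from the classical Frobenius-flatness characterization.
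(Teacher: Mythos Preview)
Your proof is correct and follows essentially the same route as the paper's: invoke Lemma \ref{ProjDim Cont} and Proposition \ref{int_closed} to obtain $\pd_R(\eee C)<\infty$, then combine the fact that an $F$-contributor is maximal Cohen--Macaulay with Auslander--Buchsbaum to force freeness. The only cosmetic difference is in the endgame: you show $C\cong R^t$ and then apply the remark after Proposition \ref{int_closed} to $\eee R$, whereas the paper observes that both ${^1}C$ and ${^2}C$ are free and uses ${^2}C={^1}({^1}C)\cong({^1}R)^t$ to exhibit ${^1}R$ directly as a summand of a free module.
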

\begin{proof}
By Lemma \ref{ProjDim Cont}, we have that
$\Tor_i(R/I, {^e}C) = 0$ for every $e\gs 0$, and thus ${^e}C$ has finite projective dimension by \cite[Corollary 3.3]{CHKV}. Since ${^e}C$ is a maximal Cohen-Macaulay module \cite[Lemma 2.2]{YaoFcontributors} (see Remark \ref{Yao}), 
we have that ${^e}C$ is a free module for every $e \gs 0$.
In particular, ${^1}C \cong \bigoplus_\alpha R$ and ${^2}C={^1}\left(\bigoplus_\alpha R\right) \cong \bigoplus_\alpha {^1}R$ is free as well. Therefore, ${^1}R$ is free and $R$ is regular \cite[Theorem $2.1$]{Kunz}.
\end{proof}

We now focus on one-dimensional rings. In this case, we can find a characterization of the vanishing of $\beta^F_i(M,R)$. We first  prove  two lemmas. 

\begin{lemma} \label{qthPowerParameter} 
Let $(R,\m,K)$ be a one-dimensional  complete local domain of characteristic $p>0,$ with $K$ algebraically closed. Then there exists a parameter $x \in R$ such that $(x^q) = \m^{[q]}$ for all $q = p^e \gg 0$. Furthermore, if $V$ denotes the integral closure of $R$ in its field of fractions, then $\eee R \cong \bigoplus V$ for all $e \gg 0$ (as $R$-modules).
\end{lemma}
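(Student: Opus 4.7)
The plan is to exploit the structure of the normalization. Since $R$ is complete with $K$ algebraically closed, the integral closure $V$ is a complete DVR whose residue field is $K$; Cohen's structure theorem then gives $V \cong K\ps{t}$ for a uniformizer $t$. The conductor $C$ of $R$ in $V$ is a nonzero $V$-ideal contained in $R$, so $C = t^s V$ for some $s \gs 0$.

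For part (1), I would choose $x \in \m$ with $t$-adic valuation $v(x)$ minimal on $\m$, so that $\m V = xV$. Any $y \in \m$ then equals $xa$ with $a \in V$, and writing $a = c + tb$ with $c \in K \subseteq R$, $b \in V$ (which is possible since $V = K + tV$), the key identity in characteristic $p$ reads
\[
a^q = c^q + t^q b^q
\]
for $q = p^e$. When $q \gs s$, the summand $t^q b^q$ sits in $t^q V \subseteq C \subseteq R$ while $c^q \in K \subseteq R$, so $a^q \in R$ and $y^q = x^q a^q \in (x^q)R$. This forces $\m^{[q]} \subseteq (x^q)$; the reverse containment is immediate from $x \in \m$, settling part (1).

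For part (2), I would apply the same computation to an arbitrary $v \in V$ to obtain $v^q \in R$ whenever $q \gs s$. This lets me endow $\eee R$ with a $V$-module structure extending the $R$-action via $v \cdot s := v^q s$, which lands in $R$ by the previous step. Since $K = K^p$ forces $R$ to be $F$-finite, $\eee R$ is a finitely generated $R$-module, hence a finitely generated $V$-module; it is clearly $V$-torsion-free because $R$ is a domain. Since $V$ is a DVR, $\eee R$ must then be a free $V$-module, yielding $\eee R \cong V^{\oplus k}$ of $R$-modules for some $k$ (a rank computation at the generic point would give $k = q$, though the statement only asks for some direct sum of copies of $V$). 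The main subtlety I anticipate is verifying that the $V$-action on $\eee R$ extends the $R$-action consistently and that Cohen's theorem produces compatible coefficient fields in $R$ and $V$; algebraic closedness of $K$ handles both issues, since it forces $V/tV = K$ and makes $K$ perfect, which powers the crucial identity $(c + tb)^q = c^q + t^q b^q$.
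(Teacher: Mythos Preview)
Your proof is correct and follows essentially the same strategy as the paper: both exploit that $V$ is a DVR with residue field $K$, use the Frobenius identity $(c+v)^q = c^q + v^q$ together with $\m_V^{[q]} \subseteq C \subseteq R$ for $q \gg 0$ to prove $\m^{[q]} = (x^q)$ and $V \subseteq R^{1/q}$, and then conclude by noting that a finitely generated torsion-free module over a DVR is free. The only cosmetic difference is that the paper adjusts a generating set $x,y_1,\dots,y_n$ of $\m$ so that $v(y_i) > v(x)$ (i.e., it subtracts off the constant term of $y_i/x$), whereas you keep the constant term $c \in K$ and observe directly that $c^q \in K \subseteq R$; similarly, the paper uses the splitting $V = R + \m_V$ while you use $V = K + \m_V$ via a coefficient field---both amount to the same computation.
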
 
\begin{proof}  
Since $R$ is a complete domain, we have that $(V,\m_V,K)$ is a one dimensional, integrally closed, local domain. Hence, $V$ is a DVR. Let $x \in R$ be a minimal reduction of $\m$, and let $v$ denote the order valuation on $V$. Let $x,y_1,\ldots,y_n$ be a minimal generating set of the maximal ideal. 
We claim that we can choose the elements $y_i$'s such that $v(x)<v(y_i)$ for all $i=1,\ldots,n$. We have $v(x) \ls v(y_i)$ for all $i$ because  $x$ is a minimal reduction of $\m$ \cite[Proposition 6.8.1]{SwansonHuneke}. If equality holds, say for $i=1$, we have that $y_1/x = \alpha \in K_V = K$ since $K$ is algebraically closed. Fix a lifting $u\in R$ of $\alpha$. If we replace $y_1$ for $y_1':=y_1-ux$ we have that $x,y_1',\ldots,y_n$ is still a minimal generating set of $\m$. Now $v(x)<v(y_1')$, since $y_1'/x \in \m_V$. 
Similarly, if necessary, we may replace each $y_i$  to obtain our claim. Since the conductor $C$ is $\m_V$-primary, for all $e \gg 0$ and all $i=1,\ldots,n$, we have that $(y_i/x)^q =r_i \in \m_V^{[q]} \subseteq C \subseteq R$. Thus $y_i^q =  r_ix^q \in (x)^q$. This shows the first part of the lemma. 

We now focus on the second part of the lemma. Since $K$ is algebraically closed, $R$ and $K$ have the same residue field. It then follows that $R \subseteq V=R+\m_V$. Since $R$ is a domain, we can identify $\eee R$ with $R^{1/q}$, the ring of $q$-th roots of $R$. 
For $w \in V$, we can write $w = u+v$, for some $u \in R$ and $v\in \m_V$. Therefore,  we have that $\m_V^{[q]} \subseteq C \subseteq R$ for $e \gg 0$, because $C$ is $\m_V$ primary. 
This shows that $w^{q} = u^{q}+v^{q} \in R$, that is $w \in R^{1/q}$. Thus, for $e \gg 0$, we have $R \subseteq V \subseteq \eee R$. Hence, $\eee R$ is a $V$-module. Since $V$ is a DVR, $\eee R$  decomposes into a $V$-free part and a $V$-torsion part. Therefore $\eee R$  is torsion free as a $V$-module because $R$ is a domain. Thus, $\eee R \cong \bigoplus_{q} V$. Finally, the $V$-module structure on $\eee R$ is compatible with the inclusion $R \subseteq V$; therefore, $\eee R \cong \bigoplus V$ is also an isomorphism of $R$-modules.
\end{proof}

\begin{lemma} \label{positive} 
Let $(R,\m,K)$ be a one-dimensional local ring of  characteristic $p>0$. Let $(G_j,\varphi_j)_{j\gs 0}$ be a minimal free resolution of a finite length $R$-module $M$. Suppose there exists $i \gs 0$ such that $\IM(\varphi_{i+1}) \not\subseteq \p G_i$ for some $\p \in \MIN(R)$. Then
\[
\ds \beta_i^F(M,R) =  \lim_{e \to \infty} \frac{\lambda(\Tor_i^R(M,\eee R))}{q^{\alpha}} > 0.
\]
\end{lemma}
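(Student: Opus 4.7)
The plan is to reduce to the case of a complete one-dimensional local domain with algebraically closed residue field, then use Lemma~\ref{qthPowerParameter} to identify $\beta_i^F(M, R/\p)$ with a single $\Tor$ into the integral closure, and finally invoke Smith normal form over that DVR to establish non-vanishing.

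First I would apply Proposition~\ref{PropAdditive}, which expresses $\beta_i^F(M,R)$ as a non-negative sum $\sum_{\p' \in \Lambda} \beta_i^F(M, R/\p')\,\lambda_{R_{\p'}}(R_{\p'})$, where $\Lambda = \{\p' : \dim R/\p' = 1\}$. Since $G_\bullet$ is minimal, $\IM \varphi_{i+1} \subseteq \m G_i$, which together with the hypothesis forces $\p \neq \m$ and hence $\p \in \Lambda$. It thus suffices to show $\beta_i^F(M, R/\p) > 0$. Setting $S = R/\p$, I would reduce via Proposition~\ref{Extension} together with a standard faithfully flat local extension to the case when $S$ is a complete one-dimensional local domain with algebraically closed residue field (choosing a minimal prime of the larger ring above $\p$ so that the analogue of the hypothesis survives, by faithfully flat descent applied to a non-zero entry of $\varphi_{i+1}$ modulo $\p$). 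In particular $\alpha = 0$ and the integral closure $V$ of $S$ is a DVR.

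By Lemma~\ref{qthPowerParameter}, $\eee S \cong V^q$ as $S$-modules for $e \gg 0$, so $\Tor_i^R(M, \eee S) \cong \Tor_i^R(M, V)^q$ and the limit defining $\beta_i^F$ collapses to $\beta_i^F(M, S) = \lambda(\Tor_i^R(M, V))$. Thus it suffices to show $\Tor_i^R(M, V) \neq 0$; compute this as $H_i(G_\bullet \otimes_R V)$, a bounded complex of finitely generated free $V$-modules. Since $M$ has finite length and $\p \neq \m$, one has $M_\p = 0$, so $G_\bullet \otimes_R \operatorname{Frac}(V)$ is exact, i.e., the complex is generically exact. Over the DVR $V$, generic exactness together with the saturatedness of $\Ker(\varphi_i \otimes V)$ in the free module $V^{\beta_i(M)}$ force $\Ker(\varphi_i \otimes V)$ to be the saturation of $\IM(\varphi_{i+1} \otimes V)$; applying Smith normal form to $\varphi_{i+1} \otimes V$ then yields $H_i(G_\bullet \otimes_R V) \cong \bigoplus_{k=1}^{r_{i+1}} V/(e_k)$, where $r_{i+1} = \rk_{\operatorname{Frac}(V)}(\varphi_{i+1} \otimes V)$ and $e_1 \mid e_2 \mid \cdots \mid e_{r_{i+1}}$ are the non-zero elementary divisors of $\varphi_{i+1} \otimes V$.

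The hypothesis provides a non-zero entry of $\varphi_{i+1}$ modulo $\p$, and hence of $\varphi_{i+1} \otimes V$, so $r_{i+1} \geq 1$; moreover, minimality of the resolution places every entry of $\varphi_{i+1} \otimes V$ in $\m V \subseteq \m_V$, so $e_1 \in \m_V$ (as $\gcd$ of the entries) and hence $e_k \in \m_V$ for all $k$, making each $V/(e_k)$ non-zero. This proves $\Tor_i^R(M, V) \neq 0$ and completes the proof. The main obstacle will be the base change in the second paragraph: one must preserve the hypothesis $\IM \varphi_{i+1} \not\subseteq \p G_i$ under passage to a faithfully flat local extension with algebraically closed residue field and to an appropriate minimal prime above $\p$; the subsequent Smith normal form analysis is transparent once this reduction is in place.
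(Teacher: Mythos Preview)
Your argument is correct and takes a genuinely different route from the paper's. Both proofs begin with the same reduction to a complete local ring with algebraically closed residue field (you should apply Proposition~\ref{Extension} \emph{before} Proposition~\ref{PropAdditive}, as the paper does, so that you are comparing $\beta_i^F(M,R)$ on the two rings rather than $\beta_i^F(M,R/\p)$; your hypothesis then survives because any minimal prime of the extension contracts to a minimal prime of $R$). After that point the two arguments diverge.

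The paper uses only the first assertion of Lemma~\ref{qthPowerParameter}, namely $\ov\m^{[q]}=(\ov x^q)$ for $q\gg 0$, to carry out an explicit length estimate: it factors a suitable power of $\ov x$ out of a column of $\ov\varphi_{i+1}^{[q]}$, exhibits a concrete element of $\Ker(\ov\varphi_i^{[qq']})$, projects to a cyclic quotient, and bounds the resulting colon ideal using Artin--Rees and the conductor. You instead use the second assertion of Lemma~\ref{qthPowerParameter}, $\eee S\cong V^q$, to collapse the limit to a single value $\beta_i^F(M,S)=\lambda(\Tor_i^R(M,V))$, and then compute $H_i(G_\bullet\otimes_R V)$ over the DVR $V$ via Smith normal form. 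Generic exactness (from $M_\p=0$) identifies the saturated submodule $\Ker(\varphi_i\otimes V)$ with the saturation of $\IM(\varphi_{i+1}\otimes V)$, so the homology is exactly the torsion $\bigoplus_k V/(e_k)$; minimality forces $e_1\in\m_V$, and the hypothesis gives $r_{i+1}\gs 1$.

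Your approach is shorter and more conceptual, and it moreover yields the exact value $\beta_i^F(M,R/\p)=\lambda(\Tor_i^R(M,V))$ rather than a lower bound; this dovetails with the equivalence (iv)$\Leftrightarrow$(v) in Theorem~\ref{equiv}. The paper's explicit estimate, on the other hand, is more self-contained at the level of matrices and produces the quantitative bound $\beta_i^F(M,R/\p)\gs \frac{1}{qq_1}\ee_{HK}(x,\ov R)$.
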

\begin{proof} 
By the Cohen Structure Theorem, we have that $\widehat{R}=K\ps{x_1,\ldots,x_n}/I$ for some $n\in\NN$ and some ideal $I\subseteq K\ps{x_1,\ldots,x_n}$. Let $S=L\ps{1,\ldots,x_n}/I',$ where $L$ is the algebraic closure of $K$ and $I'=I ~L\ps{1,\ldots,x_n}$.
Every inclusion $K\to L$ gives a flat extension $R\to S$ such that $\m S$ is the maximal ideal of $S$.
If $\IM(\varphi_{i+1} \otimes_R~1_{S}) = \IM(\varphi_{i+1}) \otimes_R S$ is contained in in a minimal prime of S, then $\IM(\varphi_{i+1})$ must be contained in the contraction of such minimal prime to $R$. Then, we can assume that  $R$ is complete and that $K$ is algebraically closed by Proposition \ref{Extension}. 

Let $\ov R$ denote $R/\p$,   $\ov{x}$  the class of the element $x$ modulo $\p$, and $V$  the integral closure of $\ov R$. Since $R/\p$ is a one-dimensional complete local domain, by Lemma \ref{qthPowerParameter} we can choose $0 \ne \overline{x} \in \ov R$ a minimal reduction of $\ov \m:= \m/\p$ and $q_0 = p^{e_0}$ such that $\ov{\m}^{[q]} = (\ov{x}^{q})$ for $q\gs q_0$. We may also choose $q_0$ large enough so that $\ov x^{q}V\cap \ov R\subseteq \ov x R$, by using the Artin-Rees Lemma and the fact that the conductor from $\ov R$ to $V$ is primary to the maximal ideal. In particular, $(\ov x^{q}V:_V\ov r)\subseteq \m_V$ for every $\ov r\in \ov R$ such that $\ov r\notin \ov x\ov R$, where
$\m_V$ is the maximal ideal of $V$, which is a DVR. 

Fix $q \gs q_0$ and consider the matrix associated to $\ov {\varphi}_{i+1}^{[q]} := \varphi_{i+1}^{[q]} \otimes 1_{\ov R}$. Since $q \gs q_0$,  
$\IM(\varphi_{i+1}^{[q]} \otimes 1_{\ov R}) \subseteq {\ov \m}^{[q]}G_i = (\ov x^{q})G_i$. Because $\IM(\varphi_{i+1}) \not\subseteq \p G_i$, by changing the basis for $G_{i+1}$ if needed, we can assume that  the matrix
\begin{eqnarray*}
\ov \varphi_{i+1}^{[q]} = \ov x^{q+j} \left[
\begin{array}{c|ccc}
\ov{r}_1& * & \ldots & * \\ \hline
\ov{r}_2 & * & \ldots & * \\ \hline \vdots & \vdots&& \vdots  \\   \hline \ov{r}_n & * &\ldots&*
\end{array},
\right]
\end{eqnarray*}
where we have factored out the biggest possible power of $\ov{x}$, so that $\ov{r}_1 \notin (\ov{x})$.  Here $n = \rk(G_i)$.

Let $q' = p^{e'},$ and consider the matrix associated to $\ov \varphi_{i+1}^{[qq']}$:
\begin{eqnarray}
\label{image}
\ov \varphi_{i+1}^{[qq']} = \ov{x}^{(q+j)q'} \left[
\begin{array}{c|ccc}
\ov{r}_1^{q'} & * & \ldots & * \\ \hline
\ov{r}_2^{q'} & * & \ldots & * \\ \hline \vdots & \vdots&& \vdots  \\   \hline \ov{r}_n^{q'} & * &\ldots&*
\end{array}
\right].
\end{eqnarray}
We claim that $[\ov{r}_1^{q'},\ov{r}_2^{q'},\ldots,\ov{r}_n^{q'}]^T \in \Ker\left(\ov {\varphi}_{i}^{[qq']}\right)$. In fact, we have that
\begin{eqnarray*}
\ov{x}^{qq'+jq'}\left[
\begin{array}{c}
{\ov{r}_1}^{q'} \\
{\ov{r}_2}^{q'} \\
\vdots \\
{\ov{r}_n}^{q'} \\
\end{array}
\right] \in \IM\left(\ov {\varphi}_{i+1}^{[qq']}\right) \subseteq \Ker\left(\ov {\varphi}_{i}^{[qq']}\right);
\end{eqnarray*}
therefore,
\begin{eqnarray*}
\ov {\varphi}_{i}^{[qq']} \left(\ov{x}^{qq'+jq'} \cdot \left[
\begin{array}{c}
{\ov{r}_1}^{q'} \\
{\ov{r}_2}^{q'} \\
\vdots \\
{\ov{r}_n}^{q'} \\
\end{array}
\right]
\right) = \ov{x}^{qq'+jq'} \cdot \ov {\varphi}_{i}^{[qq']} \left(\left[
\begin{array}{c}
{\ov{r}_1}^{q'} \\
{\ov{r}_2}^{q'} \\
\vdots \\
{\ov{r}_n}^{q'} \\
\end{array}
\right]
\right) = 0 .
\end{eqnarray*}
Since $\ov{x}^{qq'+jq'}$ is a nonzero divisor in $\ov R$, we  have
\begin{eqnarray*}
\ov {\varphi}_{i}^{[qq']} \left(\left[
\begin{array}{c}
{\ov{r}_1}^{q'} \\
{\ov{r}_2}^{q'} \\
\vdots \\
{\ov{r}_n}^{q'} \\
\end{array}
\right]
\right) =0,
\end{eqnarray*}
which proves the claim. Thus
\begin{eqnarray*}
\begin{array}{rl}
\ds \lambda(\Tor_i^R(M,\eeep{(R/\p)})) \gs & \ds \lambda\left(\frac{\ov R[\ov{r}_1^{q'},\ldots,\ov{r}_n^{q'}]^T + \IM\left(\ov {\varphi}_{i+1}^{[qq']}\right)}{\IM\left(\ov {\varphi}_{i+1}^{[qq']}\right)}\right)  \\ \\
\ds \gs & \ds \lambda \left(\frac{(\ov{r}_1^{q'}) + (\ov{x}^{qq'})}{(\ov{x}^{qq'})}\right),
\end{array}
\end{eqnarray*}
because $\IM\left(\ov {\varphi}_{i+1}^{[qq']}\right) \subseteq (\ov{x}^{qq'})G_i$. This  comes from the expression of $\ov {\varphi}_{i+1}^{[qq']}$ in (\ref{image}). We also have projected onto the first component of $G_i$. We now have a cyclic module which is isomorphic to the quotient of $\ov R$ by the ideal $(\ov{x}^{qq'} : \ov{r}_1^{q'})$.

We claim that there exists an integer $q_1=p^{e_1}$ such that for all $q'$, 
\[
(\ov{x}^{qq'} : \ov{r}_1^{q'}) \subseteq (\ov{x}^{q'/q_1}).
\]
Assuming the claim and  lifting back to $R$, we get

\begin{eqnarray*}
\begin{array}{rl}
\ds \lambda(\Tor_i^R(M,\eeep{\ov R})) \gs &  \ds \lambda \left(\frac{(\ov r_1^{q'}) + (\ov x^{qq'})}{(\ov x^{qq'})}\right) \\ \\
& \ds \gs \lambda \left( \frac{\ov R}{(\ov x^{q'/q_1}) } \right).
\end{array}
\end{eqnarray*}
Dividing by $qq'$ and taking the limit as $e' \to \infty$, we get
\begin{align*}
\ds \beta_i^F(M,\ov R) &= \lim_{e' \to \infty} \frac{\lambda(\Tor_i^R(M,\eeep(\ov R)))}{qq'} \\
& \gs \lim_{e' \to \infty} \frac{\lambda(\ov R/(\ov x^{q'/q_1}))}{qq'} =  \frac{1}{qq_1}\ee_{HK}(x,\ov R) > 0.
\end{align*}

Since  $\dim(R/\q) = \dim(R)$ for $\q \in \Spec(R)$ if and only if $\q \in \MIN(R)$,  we have
\[
\ds \beta_i^F(M,R) = \sum_{\q \in \MIN(R)} \left(\beta_i^F(M,R/\q) \lambda(R_\q) \right)\gs \beta_i^F(M,R/\p) > 0.
\]
by Proposition \ref{PropAdditive}.

It remains to prove the claim.   Suppose that $u\in (\ov{x}^{qq'} : \ov{r}_1^{q'})$. Then $u\in (\ov{x}^{qq'} : \ov{r}_1^{q'})V\cap \ov R =(\ov{x}^{q}V :_V \ov{r}_1)^{[q']}\cap \ov R\subseteq
\m_V^{q'}\cap R $ by the choice of $q$. Since the conductor of $\ov R$ is primary to the maximal ideal it follows that there exists a $q_1= p^{e_1}$ such that
$\m_V^{q'}\cap \ov R \subseteq (\ov x^{q'/q_1})$ as claimed.

\end{proof}

\begin{theorem}\label{equiv} 
Let $(R,\m,K)$ be a one-dimensional local ring of  characteristic $p>0$, and  $M$ be an $R$-module of finite length. Let $(G_j,\varphi_j)_{j \gs 0}$ denote a minimal free resolution of $M$. Then the following are equivalent:
\begin{enumerate}[(i)]
\item \label{thm1} $\IM(\varphi_{i+1}) \subseteq H^0_\m(G_i)$.
\item \label{thm2} $\Tor_i^R(M,\eee {(R/\p)}) =0$ for all $e \gs 0$, for all $\p \in \MIN(R)$.
\item \label{thm3} $\Tor_i^R(M,\eee {(R/\p)}) =0$ for all $e \gg 0$, for all $\p \in \MIN(R)$.
\item \label{thm4} $\beta_i^F(M,R) = 0$.
\end{enumerate}
Assume in addition that $R$ is complete and $K$ is algebraically closed. If $V$ denotes the integral closure of $R$ in its ring of fractions, then the conditions above are equivalent to
\begin{enumerate}[(i)]
\setcounter{enumi}{4}
\item \label{thm5} $\Tor_i^R(M,V) = 0$.
\end{enumerate}
\end{theorem}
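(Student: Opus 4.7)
The plan is to close the cycle $(i)\Rightarrow(ii)\Rightarrow(iii)\Rightarrow(iv)\Rightarrow(i)$, handling $(v)$ separately under the extra hypotheses. The step $(ii)\Rightarrow(iii)$ is trivial, and $(iii)\Rightarrow(iv)$ follows from Proposition~\ref{PropAdditive}: in a one-dimensional local ring every non-maximal prime is minimal, so $\Lambda=\MIN(R)$ in that statement, and $\beta^F_i(M,R) = \sum_{\p \in \MIN(R)} \beta^F_i(M,R/\p)\lambda(R_\p)$ has summands which vanish under $(iii)$.

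For $(i)\Rightarrow(ii)$: note that $H^0_\m(R)\subseteq\p$ for each $\p\in\MIN(R)$ (an $\m$-torsion element must vanish in the one-dimensional domain $R/\p$), so under $(i)$ the entries of $\varphi_{i+1}$ lie in $\p$ and $\varphi_{i+1}\otimes\eee(R/\p)=0$. Hence it suffices to show $\varphi_i\otimes\eee(R/\p)$ is injective. Since $\Omega_{i+1}$ has finite length, localizing the short exact sequence $0\to\Omega_{i+1}\to G_i\to\Omega_i\to 0$ at $\p$ yields $(G_i)_\p\xrightarrow{\sim}(\Omega_i)_\p\hookrightarrow(G_{i-1})_\p$, so $\varphi_i\otimes R_\p$ is injective. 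A standard socle-lifting argument for the Artinian local ring $R_\p$ then forces $\varphi_i\otimes L$ to be injective, where $L=R_\p/\p R_\p=\mathrm{Frac}(R/\p)$ is the residue field. The perfect closure $L^{1/q}$ is flat over $L$, hence over $R/\p$, so $\varphi_i\otimes L^{1/q}$ is injective. Since $\eee(R/\p)\cong(R/\p)^{1/q}$ embeds in $L^{1/q}$ and $G_\bullet$ is $R$-free, the map $\varphi_i\otimes\eee(R/\p)$ is the restriction of the injective $\varphi_i\otimes L^{1/q}$, hence injective.

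For $(iv)\Rightarrow(i)$, argue the contrapositive via Lemma~\ref{positive}: if $\IM(\varphi_{i+1}) \not\subseteq H^0_\m(G_i)$, it suffices to produce a minimal prime $\p$ with $\IM(\varphi_{i+1}) \not\subseteq \p G_i$, so that Lemma~\ref{positive} gives $\beta^F_i(M,R)>0$. \textbf{The main obstacle} is that $H^0_\m(R) \subseteq \bigcap_{\p \in \MIN(R)}\p$ with strict containment possible when $R$ is not reduced, so the failure of $(i)$ does not immediately yield such a minimal prime; one must inspect the finitely generated module $\Omega_{i+1}$ carefully, exploiting its finite-length ambient structure to force a contradiction with the failure of $(i)$ should every generator have nilpotent coordinates. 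Lastly, the equivalence with $(v)$ under the additional hypothesis follows from Lemma~\ref{qthPowerParameter}: for each $\p \in \MIN(R)$ we obtain $\eee(R/\p) \cong V_\p^{k_e}$ as $R$-modules for $e \gg 0$, so $(iii)$ becomes $\Tor^R_i(M, V_\p)=0$ for every $\p$; since $V = \prod_{\p}V_\p$ is the integral closure of $R$ in its ring of fractions, this is $\Tor^R_i(M,V)=0$.
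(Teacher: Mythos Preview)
Your cycle $(i)\Rightarrow(ii)\Rightarrow(iii)\Rightarrow(iv)$ and the handling of $(v)$ are correct and track the paper's structure; your socle argument for $(i)\Rightarrow(ii)$ is a valid repackaging of the paper's Buchsbaum--Eisenbud approach (both amount to showing that $\overline{\varphi_i}$ has full column rank over the fraction field of $R/\p$, hence is injective after tensoring with the domain $(R/\p)^{1/q}$).

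The genuine gap is in $(iv)\Rightarrow(i)$. You correctly isolate the obstacle: Lemma~\ref{positive} in contrapositive only yields $\IM(\varphi_{i+1})\subseteq\bigcap_{\p\in\MIN(R)}\p\,G_i=\sqrt{0}\,G_i$, and the containment $H^0_\m(R)\subseteq\sqrt{0}$ can be strict (e.g.\ in $K\ps{x,y}/(x^2)$ one has $x\in\sqrt{0}\setminus H^0_\m(R)$). But your proposed resolution is not an argument: the phrase ``finite-length ambient structure'' has no clear meaning here (the ambient module $G_i$ is free of positive rank, certainly not of finite length), and nilpotent entries alone do \emph{not} force $\Omega_{i+1}$ to have finite length, as the same example shows (the ideal $(x)$ there is generated by a nilpotent but has infinite length).

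What is missing is a second use of the hypothesis $\lambda(M)<\infty$. Once $\IM(\varphi_{i+1})\subseteq\sqrt{0}\,G_i$, localize at any $\p\in\MIN(R)$. The entries of $(\varphi_{i+1})_\p$ are nilpotent in the Artinian local ring $R_\p$, hence lie in its maximal ideal $\p R_\p$, so $(\Omega_{i+1})_\p\subseteq\p R_\p\,(G_i)_\p$. On the other hand, since $M_\p=0$ the localized complex $(G_\bullet)_\p$ is split exact, so $(\Omega_{i+1})_\p$ is a free direct summand of $(G_i)_\p$. A nonzero free summand cannot lie in $\p R_\p\,(G_i)_\p$ (Nakayama), so $(\Omega_{i+1})_\p=0$. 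As this holds for every minimal prime, $\Supp(\Omega_{i+1})\subseteq\{\m\}$, i.e.\ $\IM(\varphi_{i+1})\subseteq H^0_\m(G_i)$. This is exactly how the paper closes the loop.
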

\begin{proof} 
We will show that (\ref{thm1}) $\Rightarrow$ (\ref{thm2}) $\Rightarrow$ (\ref{thm3}) $\Rightarrow$ (\ref{thm4}) $\Rightarrow$ (\ref{thm1}). We assume (\ref{thm1}). Let $\p \in \MIN(R)$. 
Since $M$ has finite length we have $M_\p = 0$, and thus
\[
\Tor_j^R(M,\eee{(R/\p)})_\p =  \Tor_j^{R_\p}(M_\p,\eee{(R/\p)}_\p)=0
\]
for all $j \gs 0$. In particular, the complex 
\[
\xymatrixcolsep{5mm}
\xymatrixrowsep{2mm}
\xymatrix{
(G_\bullet \otimes \eee(R))_\p: \ldots \ar[r] & (G_{i+1})_\p \ar[rr]^-{(\varphi_{i+1}^{[q]})_\p} && (G_i)_\p \ar[rr]^-{(\varphi_i^{[q]})_\p} && (G_{i-1})_\p \ar[r] &  \ldots \ar[r] & (G_0)_\p \ar[r] & 0
}
\]
is split exact. 
All the entries in a matrix associated to $\varphi_{i+1}$ are in $H^0_\m(R)$, and in particular they are nilpotent.
We choose $q_0=p^{e_0}$ such that $\IM(\varphi_{i+1}^{[q]}) = 0$ for all $q \gs q_0$. For such $q$, we have $(\varphi_{i+1}^{[q]})_\p \equiv 0;$ therefore,
$(G_i)_\p$ splits inside $(G_{i-1})_\p$ via $(\varphi_i^{[q]})_\p$. This means that
\begin{equation}
\label{BE}
b_i := \rk((G_i)_\p) = \rk(G_i) = \rk((\varphi_{i}^{[q]})_\p) \ \ \ \ \ \mbox{ and } \ \ \ \ \ I_{b_i}(\varphi_{i}^{[q]}) \not\subseteq \p,
\end{equation}
where $I_{r}(\psi)$ denotes the Fitting ideal of an homomorphism $\psi:G \to H$ of rank $r$ between two free modules $G$ and $H$. Note that localizing and taking powers only decreases the rank of $\varphi_i$, and $b_i$ is already the maximal possible rank. Thus $b_i = \rk(\varphi_i^{[q]})$ for all $q \gs 1$. Furthermore, if $I_{b_i}(\varphi_{i})$ was contained in $\p$, then so would be $I_{b_i}(\varphi_{i}^{[q]})$. Hence (\ref{BE}) holds in fact for all $q =p^e$. Consider the following complex
\[
\xymatrixcolsep{5mm}
\xymatrixrowsep{2mm}
\xymatrix{
0 \ar[r] & G_i \otimes R/\p \ar[rrr]^-{\varphi_i^{[q]} \otimes 1_{R/\p}} &&& G_{i-1} \otimes R/\p \ar[r] & C_q \ar[r] & 0,
}
\]
where $C_q$ is the cokernel. By Buchsbaum-Eisenbud Theorem \cite{BEComplexExact}, the two conditions (\ref{BE}) ensure that it is acyclic for all $q$. Then, 
\[
\Tor_i^R(M,\eee{(R/\p)}) = \Tor_1^R(C_q,R/\p) = 0.
\]
for all $e \gs 0$.
This holds for all $\p \in \MIN(R)$, proving (\ref{thm2}). 

Clearly (\ref{thm2}) implies (\ref{thm3}). We now show (\ref{thm3}) $\Rightarrow$ (\ref{thm4}). Since for all $\p \in \MIN(R)$, we have $\Tor_i^R(M,\eee {(R/\p)}) = 0$ for $e \gg 0$, in particular $\beta_i^F(M,R/\p) = 0$. Hence
\[
\ds \beta_i^F(M,R) = \sum_{\p \in \MIN(R)} \left[\beta_i^F(M,R/\p) \lambda_{R_\p}(R_\p)\right] = 0.
\]

We now prove (\ref{thm4}) $\Rightarrow$ (\ref{thm1}).
Now suppose that $\beta_i^F(M,R) = 0$. By Lemma \ref{positive}, we have 
\[
\ds \IM(\varphi_{i+1}) \subseteq \bigcap_{\p \in \MIN(R)} \p G_i  = \sqrt{0}G_i.
\]
Since the image is nilpotent, as noticed above in (\ref{BE}) taking $q=1$,  we have
\[
b_i = \rk(G_i) = \rk(\varphi_{i}) \ \ \ \ \ \mbox{ and } \ \ \ \ \ I_{b_i}(\varphi_{i}) \not\subseteq \p
\]
for all $\p \in \MIN(R)$. Localizing the resolution at any $\p \in \MIN(R)$ gives a split exact complex
\[
\xymatrixcolsep{5mm}
\xymatrixrowsep{2mm}
\xymatrix{
(G_\bullet)_\p: 0 \ar[r] & (G_i)_\p \ar[rr]^-{(\varphi_i)_\p} && \ldots \ldots\ar[r] & (G_0)_\p \ar[r] & 0.
}
\]
In particular, $\IM((\varphi_{i+1})_\p) = (\IM(\varphi_{i+1}))_\p =0$. This holds for all minimal primes $\p$ of $R$, proving that $\IM(\varphi_{i+1}) \subseteq H^0_\m(G_i)$.

Finally, assume that $R$ is complete and $K$ is algebraically closed, and let $V$ be integral closure of $R$ in its ring of fractions. Let $\p \in \MIN(R)$ and let $V(\p)$ be the integral closure of $R/\p$, which is a DVR. By Lemma \ref{qthPowerParameter}, we have that $\eee (R/\p) \cong \bigoplus V(\p)$ for all $e \gg 0$. Condition (\ref{thm3}) implies that $\Tor_i^R(M,\eee {(R/\p)}) \cong \bigoplus \Tor_i^R(M,V(\p)) = 0$, therefore $\Tor_i^R(M,V(\p))= 0$ for all $\p \in \MIN(R)$. Since $V \cong \bigoplus_{\p \in \MIN(R)} V(\p)$, we see that (\ref{thm3}) implies (\ref{thm5}). Conversely, if $\Tor_i^R(M,V) =0$, by the same argument, we get that $\Tor_i^R(M,V(\p)) = 0$ implies $\Tor_i^R(M,\eee {(R/\p)}) = 0$ for all $e \gg 0$ and for all $\p \in \MIN(R)$. Then, (\ref{thm5}) implies (\ref{thm3}).
\end{proof}

\begin{corollary}\label{CorProjCMDimOne}
Let $(R,\m,K)$ be a one-dimensional Cohen-Macaulay local ring of characteristic $p>0$, and $M$ be an $R$-module of finite length. Then the following are equivalent:
\begin{enumerate}[(i)]
\item \label{Cor1} $\beta_i^F(M,R) = 0$ for all $i \gs 1$.
\item \label{Cor2} $\beta_i^F(M,R) = 0$ for some $i \gs 1$.
\item \label{Cor3} $\pd_R(M) < \infty$.
\end{enumerate} 
\end{corollary}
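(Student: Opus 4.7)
The structure of the proof is to show the cycle $(\ref{Cor3}) \Rightarrow (\ref{Cor1}) \Rightarrow (\ref{Cor2}) \Rightarrow (\ref{Cor3})$. The only non-trivial implication is $(\ref{Cor2}) \Rightarrow (\ref{Cor3})$, and the entire argument is an immediate application of Theorem \ref{equiv} combined with the Cohen-Macaulay hypothesis.

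For $(\ref{Cor3}) \Rightarrow (\ref{Cor1})$: If $\pd_R(M) < \infty$, then by the theorem of Peskine--Szpiro recalled at the beginning of Section \ref{SecProjDim}, we have $\Tor_i^R(M, \eee R) = 0$ for all $i > 0$ and all $e \gs 0$. Consequently $\beta_i^F(M,R) = 0$ for every $i \gs 1$. The implication $(\ref{Cor1}) \Rightarrow (\ref{Cor2})$ is trivial.

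For $(\ref{Cor2}) \Rightarrow (\ref{Cor3})$: Let $(G_j,\varphi_j)_{j\gs 0}$ be a minimal free resolution of $M$, and suppose $\beta_i^F(M,R) = 0$ for some $i \gs 1$. By the equivalence $(\ref{thm4}) \Leftrightarrow (\ref{thm1})$ in Theorem \ref{equiv}, this is equivalent to $\IM(\varphi_{i+1}) \subseteq H^0_\m(G_i)$. Here the Cohen-Macaulay hypothesis enters decisively: since $\dim(R)=1$ and $R$ is Cohen-Macaulay, we have $\Depth(R)=1$, and hence $H^0_\m(R) = 0$. Therefore $H^0_\m(G_i) = 0$, which forces $\IM(\varphi_{i+1}) = 0$. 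By minimality of the resolution, $\IM(\varphi_{i+2}) = \Ker(\varphi_{i+1}) = G_{i+1}$, but also $\IM(\varphi_{i+2}) \subseteq \m G_{i+1}$, so Nakayama's lemma yields $G_{i+1} = 0$ and $\pd_R(M) \ls i < \infty$.

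The key step is the second implication, and there is really no obstacle: once one has Theorem \ref{equiv} available, the Cohen-Macaulay condition trivializes condition $(\ref{thm1})$ by making the local cohomology module $H^0_\m(R)$ vanish. The result is therefore essentially a corollary of the characterization theorem.
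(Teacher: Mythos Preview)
Your proof is correct and follows essentially the same approach as the paper: both use Theorem \ref{equiv} to reduce $\beta_i^F(M,R)=0$ to $\IM(\varphi_{i+1}) \subseteq H^0_\m(G_i)$, then invoke the Cohen-Macaulay hypothesis to force $H^0_\m(G_i)=0$ and hence finite projective dimension. Your explicit Nakayama argument for why $\IM(\varphi_{i+1})=0$ implies $G_{i+1}=0$ is a minor elaboration the paper omits, but otherwise the proofs are identical.
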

\begin{proof} 
Clearly (\ref{Cor1}) implies (\ref{Cor2}). Now assume (\ref{Cor2}), we want to show that (\ref{Cor3}) holds. By assumption, there exists an integer $i \gs 1$ such that $\beta_i^F(M,R) = 0$. Then, Theorem \ref{equiv} implies that $\IM(\varphi_{i+1}) \subseteq H^0_\m(G_i)$, where $(G_j,\varphi_j)_{j \gs 0}$ is a minimal free resolution of $M$. However, $R$ has positive depth and hence
\[
\IM(\varphi_{i+1}) = H^0_\m(\IM(\varphi_{i+1})) \subseteq H^0_\m(G_i) = 0,
\]
since $G_i$ is a free module. Thus $\IM(\varphi_{i+1}) = 0$ and  $\pd_R(M) < \infty$. Finally, if (\ref{Cor3}) holds, we have $\Tor_i^R(M,\eee R) = 0$ for all $i \gs 1$ and $e \gs 0$ \cite[Th\'eor\`eme 1.7]{P-S}. In particular, $\beta_i^F(M,\eee R) = 0$ for all $i \gs 1$.
\end{proof}

\begin{corollary}\label{CorPrjDimGralOneDim}
Let $(R,\m,K)$ be a one-dimensional local ring of characteristic $p>0,$ and let $M$ be a finite length $R$-module. If $\beta_i^F(M,R) = \beta_{i+1}^F(M,R) = 0$ for some $i \gs 1$, then $\pd_R(M)< \infty$. In particular, for any parameter $x$, if $\beta_2^F(R/(x),R) = 0$ then $R$ is Cohen-Macaulay.
\end{corollary}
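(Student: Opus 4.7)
The plan is to invoke Theorem \ref{equiv} on both vanishing hypotheses and then squeeze $G_{i+1}$ between two finite-length modules. Specifically, from $\beta_i^F(M,R) = 0$ I would obtain $\IM(\varphi_{i+1}) \subseteq H^0_\m(G_i)$, and from $\beta_{i+1}^F(M,R) = 0$ I would get $\IM(\varphi_{i+2}) \subseteq H^0_\m(G_{i+1})$. Since $G_i$ and $G_{i+1}$ are finitely generated free $R$-modules, their zeroth local cohomologies are of finite length, so both $\IM(\varphi_{i+1})$ and $\Ker(\varphi_{i+1}) = \IM(\varphi_{i+2})$ are of finite length.

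The short exact sequence
\[
0 \longrightarrow \IM(\varphi_{i+2}) \longrightarrow G_{i+1} \longrightarrow \IM(\varphi_{i+1}) \longrightarrow 0
\]
then forces $G_{i+1}$ to have finite length. Since $\dim R = 1$, the ring $R$ admits a minimal prime $\p \ne \m$; but a nonzero finitely generated free $R$-module localized at $\p$ is a nonzero free $R_\p$-module, whereas a finite-length module must localize to $0$ at $\p$. This forces $G_{i+1} = 0$, and consequently $\pd_R(M) \le i < \infty$.

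For the ``in particular'' clause, I would apply the same strategy directly to $M = R/(x)$, even though only one vanishing hypothesis is available. The start of the minimal free resolution is $\varphi_1 : R \to R$ given by multiplication by $x$, so $\IM(\varphi_2) = 0 :_R x$. The key observation that replaces the missing hypothesis is that $0 :_R x$ is automatically of finite length: because $x$ is a parameter in a one-dimensional ring, $(x)$ is $\m$-primary, so $0 :_R x$ is supported only at $\m$. Combined with $\IM(\varphi_3) \subseteq H^0_\m(G_2)$ coming from Theorem \ref{equiv}, the sequence $0 \to \IM(\varphi_3) \to G_2 \to 0 :_R x \to 0$ again sandwiches $G_2$ between two finite-length modules, so $G_2 = 0$. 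Hence $\pd_R(R/(x)) \le 1$, which forces $x$ to be a nonzerodivisor and $R$ to be Cohen-Macaulay. The substantive content of the argument is already contained in Theorem \ref{equiv}; the only real subtlety is the recognition that in the case $M = R/(x)$, the one-dimensionality of $R$ automatically supplies the second finite-length condition, so only one of the two $\beta^F$ vanishings is needed.
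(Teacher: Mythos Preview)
Your proof is correct, and it takes a genuinely different route from the paper's.

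The paper only invokes Theorem~\ref{equiv} once, for the index $i$, to conclude that $\IM(\varphi_{i+1})$ is nilpotent. It then picks $q\gg 0$ so that $\varphi_{i+1}^{[q]}=0$, which forces $\Ker(\varphi_{i+1}^{[q]})=G_{i+1}$; projecting onto one free summand yields the lower bound $\beta_{i+1}^F(M,R)\gs e_{HK}(\m,R)>0$, contradicting the second vanishing hypothesis. Your argument instead applies Theorem~\ref{equiv} at \emph{both} indices $i$ and $i+1$, obtaining that $\IM(\varphi_{i+1})$ and $\IM(\varphi_{i+2})=\Ker(\varphi_{i+1})$ are simultaneously of finite length, and then uses the trivial sandwich to force $G_{i+1}=0$. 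This is more elementary: it avoids the Frobenius-twist computation and the Hilbert--Kunz bound entirely, and it gives the sharper conclusion $\pd_R(M)\ls i$ directly.

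For the ``in particular'' clause the contrast is similar. The paper first argues that $\beta_1^F(R/(x),R)=0$ via the vanishing of $\lim_e \lambda(H_1(x^q;R))/q$ (citing results on phantom homology), and then feeds $i=1$ into the main statement. You bypass this by observing directly that $\IM(\varphi_2)=0:_R x$ already has finite length because $x$ is a parameter, so only the single hypothesis $\beta_2^F(R/(x),R)=0$ is needed to run the sandwich on $G_2$. This is a cleaner and more self-contained way to obtain the Cohen--Macaulay conclusion. The trade-off is that the paper's approach exhibits an explicit positive lower bound for $\beta_{i+1}^F$ in terms of $e_{HK}(\m,R)$, a quantitative statement your argument does not produce.
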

\begin{proof} 
Let $(G_j,\varphi_{j})_{j \gs 0}$ be a minimal free resolution of $M$. Since $\beta_i^F(M,R) = 0$, we have that $\IM(\varphi_{i+1})$ has finite length, and it is nilpotent. Take $q = p^e\gg 0$ so that $\IM(\varphi_{i+1}^{[q]}) = 0$. For such $q$ we  have $\Ker(\varphi_{i+1}) = G_{i+1}$. Since the resolution is minimal, we get
\[
\ds \lambda(\Tor_{i+1}^R(M,\eee R)) = q^\alpha\lambda\left(\frac{G_{i+1}}{\IM(\varphi_{i+2}^{[q]})}\right) \gs q^\alpha\lambda\left(\frac{R}{\m^{[q]}}\right),
\]
where the last inequality comes from projecting onto one of the components of $G_{i+1}$. Dividing by $q$ and taking limits, we get
\[
\ds \beta_{i+1}^F(M,R) = \lim_{e \to \infty} \frac{\lambda(\Tor_{i+1}^R(M,\eee R))}{q^{(1+\alpha)}} \gs \lim_{e \to \infty} \frac{\lambda(R/\m^{[q]})}{q} = \ee_{HK}(\m,R) > 0,
\]
which is a contradiction. 

The last claim follows from the fact that for any parameter $x$, we have
\[
\beta_1^F(R/(x),R) \ls \lim_{e \to \infty} \frac{\lambda(H_1(x^q;R))}{q} = 0,
\]
where $H_1$ denotes the first Koszul homology (see \cite{RobertsNIT} and \cite[Theorem 6.2]{HoHuPhantom}).
\end{proof}
\begin{lemma} \label{fpdim} 
Let $(R,\m,K)$ be a  local ring of positive characteristic $p>0,$ and  $\p \in \Spec(R)$. If $\pd_R(\p) < \infty$, then $R$ is a domain.
\end{lemma}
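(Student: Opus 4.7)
The plan combines the Auslander--Buchsbaum--Serre theorem (after localizing at $\p$) with characteristic-$p$ methods (Peskine--Szpiro and minimality of Frobenius-twisted resolutions). First, the short exact sequence $0 \to \p \to R \to R/\p \to 0$ gives $\pd_R(R/\p) < \infty$. Localizing at $\p$ then yields $\pd_{R_\p}(R_\p/\p R_\p) < \infty$; since $\p R_\p$ is the maximal ideal of $R_\p$, Auslander--Buchsbaum--Serre forces $R_\p$ to be regular, hence a domain. Consequently, $R$ has a unique minimal prime $\q$ contained in $\p$, and $\q R_\p = 0$. It therefore remains to show that $\q$ is the only minimal prime of $R$ and that $\q = 0$.

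For uniqueness of the minimal prime, let $F_\bullet \to R/\p$ be a finite minimal free resolution with $F_i = R^{b_i}$. Localizing at any minimal prime $\q'$ of $R$ (so that $R_{\q'}$ is Artinian) and computing the alternating sum of lengths yields
\[
\sum_i (-1)^i b_i \cdot \lambda(R_{\q'}) = \lambda((R/\p)_{\q'}),
\]
whose right-hand side is $0$ when $\p \not\subseteq \q'$ and equal to $1$ when $\p = \q'$ (which forces $\p$ to be minimal). If $\p$ were itself minimal and a second minimal prime $\q' \ne \p$ existed, the invariant $\sum (-1)^i b_i$ would have to equal both $1/\lambda(R_\p)$ and $0$, a contradiction. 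If $\p$ is not minimal I would refine this by applying the same analysis to the Frobenius iterates $R/\p^{[q]}$: Peskine--Szpiro's vanishing $\Tor^R_i(R/\p,\eee R) = 0$ for $i\gs 1$ guarantees that the Betti numbers of $R/\p^{[q]}$ are preserved under Frobenius twists, while $\lambda((R/\p^{[q]})_\p)$ grows like $q^{\operatorname{ht}(\p)}$ (because $R_\p$ is regular) and stays $0$ at any other minimal prime of $R$, producing the desired inconsistency.

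Finally, to show $\q = 0$ I would use Peskine--Szpiro together with the minimality of the resolution of $\p$: the Frobenius twist $F_\bullet^{[q]}$ of a minimal free resolution $F_\bullet \to \p$ is a minimal free resolution of $\p^{[q]}$. Once it is known that $\q$ is the unique minimal prime of $R$, $\q$ is the nilradical of $R$; in the case $\p = \q$ with $\p \ne 0$, this gives $\p^{[q]} = 0$ for $q \gg 0$, so $F_\bullet^{[q]}$ becomes an exact complex of free modules whose differentials have entries in $\m^{[q]} \subseteq \m$, and Nakayama forces $b_0 = 0$, contradicting $\p \ne 0$. The main obstacle, I expect, will be the case in which $\p$ is not minimal: there the Frobenius/Nakayama contradiction does not apply to $\p$ directly but only to the smaller prime $\q$, so one has to transfer the hypothesis $\pd_R \p < \infty$ down to $\q$ (for instance by passing to the Frobenius iterates and combining the Euler-characteristic step with a reduction to the unique minimal prime situation before running the minimality argument on $\q$).
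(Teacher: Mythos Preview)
Your proposal is incomplete, and the gap you yourself identify---the case where $\p$ is not a minimal prime---is genuine and is not repaired by the sketch you give. The Euler-characteristic identity $\sum_i(-1)^ib_i\cdot\lambda(R_{\q'})=\lambda\bigl((R/\p^{[q]})_{\q'}\bigr)$ is valid only at primes $\q'$ for which $R_{\q'}$ is Artinian, i.e.\ at minimal primes; when $\p$ is not minimal you cannot evaluate it at $\p$, so the growth $\lambda\bigl((R/\p^{[q]})_\p\bigr)\sim q^{\operatorname{ht}(\p)}$ has nothing to be compared against. In fact, at \emph{every} minimal prime $\q'$ one has $(R/\p^{[q]})_{\q'}=0$ (since $\p\not\subseteq\q'$), so the identity uniformly yields $\sum_i(-1)^ib_i=0$ and no inconsistency arises. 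Likewise, ``transferring $\pd_R\p<\infty$ down to $\q$'' is not justified: nothing in the hypotheses forces the nilradical to have finite projective dimension. (Your treatment of the case $\p$ minimal, via the Nakayama/minimality contradiction, is fine; it is only the non-minimal case that fails.)

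The paper bypasses the case distinction with a single observation that you are close to but never use. By Peskine--Szpiro, $F_\bullet^{[q]}$ is again a \emph{minimal} free resolution of $R/\p^{[q]}$, and from this one deduces $\Ass_R(R/\p^{[q]})=\{\p\}$, i.e.\ each $\p^{[q]}$ is $\p$-primary. Hence every $x\notin\p$ is a nonzerodivisor modulo $\p^{[q]}$: if $xy=0$ then $y\in\p^{[q]}$ for all $q$, and $\bigcap_q\p^{[q]}\subseteq\bigcap_q\m^q=0$. Thus the localization map $R\to R_\p$ is injective, and since you already showed $R_\p$ is regular (hence a domain), $R$ embeds in a domain and is itself a domain---with no Euler characteristics, no split into minimal versus non-minimal $\p$, and no Nakayama argument required.
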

\begin{proof} Since $\p$ has finite projective dimension, given a minimal free resolution 
\[
\xymatrixcolsep{5mm}
\xymatrixrowsep{2mm}
\xymatrix{
0 \ar[r] & L_t  \ar[rr]^-{\psi_t} &&  \ldots \ar[r] & L_0 \ar[r] & R/\p \ar[r] & 0
}
\]
of $R/\p$ over $R$, we have that
\[
\xymatrixcolsep{5mm}
\xymatrixrowsep{2mm}
\xymatrix{
0 \ar[r] & L_t  \ar[rr]^-{\psi_t^{[q]}} && \ldots \ar[r] & L_0 \ar[r] & R/\p^{[q]}\ar[r] & 0
}
\]
is a minimal free resolution of $R/\p^{[q]}$ 
over $R$ \cite[Exemples 1.3 d)]{P-S}. Then $\Ass_R(R/\p^{[q]}) = \{\p\}$, and so,
$\p^{[q]}$ is $\p$-primary for all $q = p^e$. Let $x \notin \p$ and assume $xy = 0$ for $y \in R$. This implies that for any  $q$, we have $xy \in \p^{[q]}$. We conclude that $y \in \p^{[q]}$ since $x \notin \p$. Thus 
\[
y \in \bigcap_{q \gs 1} \p^{[q]} = (0).
\]
In particular, the localization map $R \to R_\p$ is injective.  We have that $\pd_R(R/\p) < \infty$ implies $\pd_{R_\p}(k(\p)) < \infty$. Then,  $R_\p$ is a regular local ring; in particular, a domain.
Therefore, $R$ is a domain.
\end{proof}

\begin{proposition} 
Let $(R,\m,K)$ be a one-dimensional local ring of characteristic $p>0,$ and  $I$ be an $\m$-primary integrally closed ideal. If $\beta_i^F(R/I,R) = 0$ for some $i > 0$, then $R$ is regular.
\end{proposition}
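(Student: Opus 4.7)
The plan is to use the single vanishing $\beta_i^F(R/I,R)=0$ to force $R$ to be a domain, then to invoke the one-dimensional Cohen--Macaulay case treated in Corollary~\ref{CorProjCMDimOne}, and finally to upgrade ``finite projective dimension of $R/I$'' to ``regularity of $R$'' through Proposition~\ref{int_closed}.

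First, I would apply Theorem~\ref{equiv} to $M = R/I$: the implication (iv)~$\Rightarrow$~(ii), specialised to $e=0$, yields $\Tor_i^R(R/I,R/\p)=0$ for every $\p \in \MIN(R)$. By the symmetry of $\Tor$ this reads $\Tor_i^R(R/\p,R/I)=0$, and Proposition~\ref{int_closed} with $N=R/\p$ then forces $\pd_R(R/\p) < i$. In particular $\pd_R(\p) < \infty$ for each minimal prime $\p$ of $R$, so Lemma~\ref{fpdim} shows that $R$ is a domain; being one-dimensional, it is automatically Cohen--Macaulay.

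At that point Corollary~\ref{CorProjCMDimOne} applies to the finite length module $R/I$: the standing hypothesis $\beta_i^F(R/I,R)=0$ with $i \gs 1$ gives $\pd_R(R/I) = t < \infty$. Then $\Tor_{t+1}^R(K,R/I) = \Tor_{t+1}^R(R/I,K) = 0$, so a final application of Proposition~\ref{int_closed}, now with $N = K$, yields $\pd_R(K) < t+1$, which proves that $R$ is regular.

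The only step that requires a small observation is the first one: recognising that Theorem~\ref{equiv}(ii) combined with the integral closedness of $I$ (through Proposition~\ref{int_closed}) already forces each $R/\p$ to have finite projective dimension, and hence $R$ to be a domain. After that, Corollary~\ref{CorProjCMDimOne} and Proposition~\ref{int_closed} chain together without further obstacle.
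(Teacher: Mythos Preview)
Your proof is correct and follows essentially the same route as the paper's: Theorem~\ref{equiv} plus Proposition~\ref{int_closed} and Lemma~\ref{fpdim} to get that $R$ is a domain, then Corollary~\ref{CorProjCMDimOne} to get $\pd_R(R/I)<\infty$. The only cosmetic difference is the final step, where the paper cites Burch's result that $R/I$ tests finite projective dimension, whereas you re-apply Proposition~\ref{int_closed} with $N=K$; your version is slightly more self-contained.
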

\begin{proof} 
Let $\p$ be a minimal prime of $R$. Since $\beta_i^F(R/I,R) = 0$, by Theorem \ref{equiv} we have that $\Tor_i^R(R/I,R/\p) = 0$.  By Proposition \ref{int_closed}, it follows that $\pd_R(R/\p) < \infty$, and thus $R$ is a domain by Lemma \ref{fpdim}. Since one-dimensional local domains are Cohen-Macaulay, by Corollary \ref{CorProjCMDimOne} we have that $\pd_R(R/I)<\infty$. In particular $\Tor_j^R(R/I,K) = 0$ for $j\gg0$. We conclude that $\pd_R(K) < \infty$ because $R/I$ tests finite projective dimension \cite[Theorem 5(ii)]{Burch}. Hence, $R$ is regular.
\end{proof}

\section{Syzygies of finite length}\label{SecSyzFinLen}

We now present several characteristic-free results. In particular, we do not always assume that the rings have positive characteristic. We focus on Question \ref{Q SyzFinLen}. Specifically, we give support to the claim that a finite length $R$-module $M$ of infinite projective dimension cannot have a finite length syzygy $\Omega_i$ for $i > \dim(R)+1$. As a consequence of our methods, we describe, in some cases, the dimension of the syzygies.

It follows from Theorem \ref{equiv} that if $\dim (R) = 1$ and $R$ has positive characteristic, then a positive answer to Question \ref{Q SyzFinLen}  is equivalent to the statement:
for every  $M$   of finite length, $\beta_i^F(M,R) = 0$ for some  $i>1$ implies  $\pd_R(M) < \infty.$

We now provide an example that shows that the requirement of $i>\dim(R)+1$ in Question \ref{Q SyzFinLen} is necessary to have a positive answer. 

\begin{example}\label{finitelength}
Let $R = \mathbb{F}_p\ps{x,y}/(x^2,xy)$ and $M = R/(x)$. Then $\dim(R) = 1$. In addition, $\pd_R(M) = \infty$ because $R$ is not Cohen-Macaulay. We have that  $\Omega_2 \cong H^0_{(x,y)}(R) = (x)$ has finite length.
\end{example}

\begin{lemma} \label{lemmah0} Let $(R,\m,K)$ be a local ring and let $M$ be a finite length $R$-module that has a finite length syzygy $\Omega_{i+1}$, for some fixed $i > 0$. Then,
\[
\Tor_i^R\left(M,R/H^0_\m(R)\right) = 0.
\]
If $R$ has positive characteristic $p$, then for all $e \gs 0$
\[
\Tor_i^R\left(M,\eee{\left(R/H^0_\m(R)\right)}\right) = 0.
\]
\end{lemma}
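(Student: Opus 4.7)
The plan is to compute both Tors directly from a minimal free resolution $(G_\bullet, \varphi_\bullet)$ of $M$, using that the finite-length hypothesis on $\Omega_{i+1}$ is equivalent to $\IM(\varphi_{i+1}) \subseteq H^0_\m(G_i) = HG_i$ (writing $H = H^0_\m(R)$), i.e.\ the matrix of $\varphi_{i+1}$ has all entries in $H$. After tensoring with $R/H$ or $\eee(R/H)$, this makes the incoming differential vanish, so each Tor reduces to the kernel of the outgoing differential modulo $HG_i$.

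For the first part, fix $K$ with $\m^K H = 0$. Since $\overline{\varphi_{i+1}} = 0$, one obtains $\Tor_i^R(M, R/H) = \varphi_i^{-1}(HG_{i-1})/HG_i$. The containment $\varphi_i^{-1}(HG_{i-1}) \subseteq HG_i$ then follows from a two-step nilpotence argument: if $\varphi_i(v) \in HG_{i-1}$ then $\m^K \varphi_i(v) = 0$, so $\m^K v \subseteq \Ker(\varphi_i) = \IM(\varphi_{i+1}) \subseteq HG_i$, and a second application of $\m^K$ forces $\m^{2K}v = 0$, whence $v \in HG_i$.

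For the characteristic-$p$ part, the complex $G_\bullet \otimes_R \eee(R/H)$, viewed as an abelian-group complex, is isomorphic to $(G_\bullet/HG_\bullet,\, \overline{\varphi_\bullet^{[q]}})$ with $q = p^e$; this reflects that $r \in R$ acts on $y \in \eee(R/H)$ as $r^q y$. The entries of $\varphi_{i+1}^{[q]}$ lie in $H^{[q]} \subseteq H$, so again the incoming differential vanishes and $\Tor_i^R(M, \eee(R/H)) = (\varphi_i^{[q]})^{-1}(HG_{i-1})/HG_i$. The main obstacle is that Frobenius does not preserve exactness, so the replacement of $\Ker(\varphi_i) = \IM(\varphi_{i+1})$ is no longer automatic; to still conclude $(\varphi_i^{[q]})^{-1}(HG_{i-1}) \subseteq HG_i$, it suffices to show $\Ker(\varphi_i^{[q]}) \subseteq HG_i$, and then the first-part argument replays verbatim.

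To establish $\Ker(\varphi_i^{[q]}) \subseteq HG_i$, the idea is to show that the homology $H_i(G_\bullet^{[q]}) = \Ker(\varphi_i^{[q]})/\IM(\varphi_{i+1}^{[q]})$ has finite length. Since $\m^N M = 0$ for some $N$, multiplication by any $m \in \m^N$ on $G_\bullet$ is chain-homotopic to zero, giving maps $\sigma_j \colon G_j \to G_{j+1}$ with $m \cdot \mathrm{id} = \varphi_{j+1}\sigma_j + \sigma_{j-1}\varphi_j$. Raising these matrix identities entrywise to the $q$-th power---using $(AB)^{[q]} = A^{[q]}B^{[q]}$ and additivity of Frobenius in characteristic $p$---yields $m^q \cdot \mathrm{id} = \varphi_{j+1}^{[q]}\sigma_j^{[q]} + \sigma_{j-1}^{[q]}\varphi_j^{[q]}$, so the $\m$-primary ideal $(\m^N)^{[q]}$ annihilates $H_i(G_\bullet^{[q]})$. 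As a finitely generated subquotient of $G_i$, this homology is therefore of finite length; combined with $\IM(\varphi_{i+1}^{[q]}) \subseteq HG_i$, this pushes $\Ker(\varphi_i^{[q]})$ into $HG_i$, completing the argument.
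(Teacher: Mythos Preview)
Your proof is correct, but it follows a genuinely different route from the paper's.

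The paper argues via the Buchsbaum--Eisenbud acyclicity criterion. Since $M$ and $\Omega_{i+1}$ both have finite length, localizing $G_\bullet$ at any $\p\ne\m$ gives a split exact complex starting at $(G_i)_\p$; this forces $\rk(\varphi_i)=\rk(G_i)=:r$ and $I_r(\varphi_i)\not\subseteq\p$ for all such $\p$, so $I_r(\overline{\varphi_i})$ is $\m$-primary in $R/H$, which has positive depth. Buchsbaum--Eisenbud then shows $0\to\overline{G_i}\xrightarrow{\overline{\varphi_i}}\overline{G_{i-1}}$ is exact, i.e.\ $\Ker(\overline{\varphi_i})=0$. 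In characteristic $p$ the same argument runs with $\varphi_i^{[q]}$ in place of $\varphi_i$, using that Frobenius preserves $r\times r$ minors, so $I_r(\varphi_i^{[q]})$ is still $\m$-primary.

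Your approach is more elementary: you never invoke Fitting ideals or Buchsbaum--Eisenbud, instead using only exactness of $G_\bullet$ at spot $i$, the inclusion $\Omega_{i+1}\subseteq HG_i$, and the identification $H^0_\m(G_i)=HG_i$ to run a two-step nilpotence argument. The cost is that in the characteristic-$p$ case you must supply a separate reason that $\Ker(\varphi_i^{[q]})\subseteq HG_i$, since Frobenius destroys exactness; your null-homotopy trick (lifting $m\cdot\mathrm{id}_M=0$ and applying $(-)^{[q]}$ to the matrix identity) handles this cleanly and is essentially a direct proof that $H_i(G_\bullet^{[q]})$ has finite length. The paper's approach treats both cases uniformly but uses heavier machinery; yours is self-contained but asymmetric between the two statements. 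One minor point: you use $K$ for an integer exponent while the paper reserves $K$ for the residue field---choose a different letter to avoid a clash.
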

\begin{proof}
Set $H:=H^0_\m(R)$. Let $(G_\bullet,\varphi_\bullet)$ be a minimal free resolution of $M$:
\[
\xymatrixcolsep{5mm}
\xymatrixrowsep{2mm}
\xymatrix{
G_\bullet: \ldots \ar[r] &G_{i+1} \ar[rr]^-{\varphi_{i+1}} &&  G_i \ar[rr]^-{\varphi_i} &&  G_{i-1} \ar[rr]^-{\varphi_{i-1}} && G_{i-2} \ar[r] &\ldots\ar[r]& G_0 \ar[r] & M \ar[r] & 0.
}
\]
Tensor $G_\bullet$ with $R/H$ and denote by $\ov{G}_\bullet$ its residue class modulo $H$:
\[
\xymatrixcolsep{5mm}
\xymatrixrowsep{2mm}
\xymatrix{
\ov{G}_{i+1} \ar[rr]^-{\ov{\varphi}_{i+1}} &&  \ov{G}_i \ar[rr]^-{\ov{\varphi}_i} &&  \ov{G}_{i-1}
}
\]
Since $\IM(\varphi_{i+1}) = \Omega_{i+1}$ has finite length by assumption, we have $\ov{\varphi}_{i+1} = 0$. We want to show that $\Ker(\ov{\varphi}_{i}) = 0 $ as well. For any $\p \in \Spec(R) \smallsetminus \{\m\}$, the complex $(G_\bullet)_\p$ is split exact:
\[
\xymatrixcolsep{5mm}
\xymatrixrowsep{2mm}
\xymatrix{
0\ar[rr] &&  (G_i)_\p  \ar[rr]^-{(\varphi_i)_\p} &&  (G_{i-1})_\p \ar[rr]^-{(\varphi_{i-1})_\p} && (G_{i-2})_\p \ar[r] &\ldots  \ar[r]& (G_0)_\p \ar[rr] &&  0,
}
\]
because $M$ and $\Omega_{i+1}$ have finite length. 
We have that $\rk((\varphi_i)_\p)$ is maximal, because $\rk(G_i) \ls \rk(G_{i-1})$ as the localized complex is split exact, and localizing only decreases the rank of a map. 
Thus, $r:=\rk(G_i) = \rk((\varphi_i)_\p) = \rk(\varphi_i)$.
Furthermore, $I_r(\varphi_i) \not\subseteq \p$, by split exactness. Since this holds for all $\p \in \Spec(R) \smallsetminus\{\m\};$ in particular, we have $\Depth(I_r(\ov{\varphi}_i)) \gs 1$. By the Buchsbaum-Eisenbud Criterion, we have that
\[
\xymatrixcolsep{5mm}
\xymatrixrowsep{2mm}
\xymatrix{
0 \ar[r] &  \ov{G}_i \ar[rr]^-{\ov{\varphi}_i} &&  \ov{G}_{i-1} \ar[rr] && \ov{\Omega}_{i-1} = \Omega_{i-1}/H\Omega_{i-1} \ar[r] & 0.
}
\]
is an exact complex.
Therefore $\Ker(\ov{\varphi}_i) = 0$, and hence $\Tor_i^R(M,R/H) = 0$. For the second part of the Lemma, when $R$ has positive characteristic, the argument is the same: just notice that the complex $\eee(G_\bullet)_\p$ is again split exact for all primes $\p \ne \m$ and apply the same argument as above to the map $\ov{\varphi}_i^{[q]}$.
\end{proof}

We now give  results that support an affirmative answer to Question \ref{Q SyzFinLen} for one-dimensional rings. Over Buchsbaum rings, the modules $H^i_\m(R)$ are $K$-vector spaces for $i < \dim(R)$. Because of this fact, we can prove the following proposition using Lemma \ref{lemmah0}.

\begin{proposition}\label{Prop Buchsbaum}
Let $(R,\m,K)$ be a one-dimensional Buchsbaum ring. Then the answer to Question \ref{Q SyzFinLen} is positive.
\end{proposition}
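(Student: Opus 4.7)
The plan is to apply Lemma \ref{lemmah0} together with the fact that, for $R$ a one-dimensional Buchsbaum ring, $H := H^0_\m(R)$ is a $K$-vector space. Fix a finite length module $M$ with $\pd_R M = \infty$, and assume, for contradiction, that $\Omega_{i+1}(M)$ has finite length for some $i \gs 2$ (this covers every syzygy $\Omega_j$ with $j > d+1 = 2$). By Remark \ref{Rem Buchsbaum}, $\m H = 0$, so $H \cong K^t$ as an $R$-module for some $t \gs 0$. I treat the subcases $t > 0$ and $t = 0$ separately.

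If $t > 0$, Lemma \ref{lemmah0} yields $\Tor_i^R(M, R/H) = 0$. Since $R$ is free over itself, the long exact $\Tor$ sequence associated to $0 \to H \to R \to R/H \to 0$ produces an isomorphism
\[
0 \;=\; \Tor_i^R(M, R/H) \;\cong\; \Tor_{i-1}^R(M, H) \;\cong\; \Tor_{i-1}^R(M, K)^t,
\]
valid as soon as $i \gs 2$, because both flanking terms $\Tor_j^R(M, R)$ vanish in that range. As $t > 0$, this forces $\Tor_{i-1}^R(M, K) = 0$, i.e., $\beta_{i-1}(M) = 0$. Minimality of the free resolution then gives $\pd_R M \ls i - 2 < \infty$, contradicting the hypothesis.

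If $t = 0$, then $R$ is Cohen-Macaulay of depth $1$, so the depth lemma applied iteratively to the short exact sequences $0 \to \Omega_{k+1}(M) \to G_k \to \Omega_k(M) \to 0$ (with $G_k$ free) gives $\Depth \Omega_{k+1}(M) \gs 1$ whenever $\Omega_{k+1}(M) \ne 0$. Consequently, $\Omega_{i+1}(M)$ cannot be both of finite length and nonzero; so $\Omega_{i+1}(M) = 0$, forcing $\pd_R M \ls i < \infty$, again a contradiction.

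The main technical point is that the $\Tor$ isomorphism $\Tor_i^R(M, R/H) \cong \Tor_{i-1}^R(M, H)$ genuinely requires $i \gs 2$, which is precisely the threshold $i + 1 > d + 1 = 2$ appearing in Question \ref{Q SyzFinLen}; that one cannot go below this bound is consistent with Example \ref{finitelength}, where the obstruction appears at $i = 1$. Beyond this indexing check, both branches of the argument are short, so I do not anticipate any other technical difficulty.
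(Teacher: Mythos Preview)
Your proof is correct and follows essentially the same route as the paper: apply Lemma~\ref{lemmah0}, use Remark~\ref{Rem Buchsbaum} to identify $H^0_\m(R)$ with $K^t$, and dimension-shift along $0\to H\to R\to R/H\to 0$ to conclude $\Tor_{i-1}^R(M,K)=0$. Your explicit treatment of the case $t=0$ (where $R$ is Cohen--Macaulay and the depth lemma finishes the argument) is in fact slightly more careful than the paper, which tacitly assumes $t>0$.
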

\begin{proof}
Assume that there exists a finite length $R$-module $M$ such that $\Omega_{i+1}(M)$ has finite length for some $i \gs 2$. By Lemma \ref{lemmah0}, we have
\[
0 = \Tor_i^R\left(M,R/H^0_\m(R)\right) \cong \Tor_{i-1}^R\left(M,H^0_\m(R)\right),
\]
where $i-1 \gs 1$ for dimension shifting. By Remark \ref{Rem Buchsbaum}, we have that $H^0_\m(R) \cong \bigoplus_{j=1}^t K$. Therefore,
\[
0 = \Tor_{i-1}^R\left(M,H^0_\m(R)\right) = \bigoplus_{j=1}^t \Tor_{i-1}^R\left(M,K\right),
\]
which implies $\Tor_{i-1}^R(M,K) = 0$. Hence, $\pd_R(M) \ls i-2$.
\end{proof}

We now present two results about the dimension of syzygies of a finite-length module. These results will be used in Proposition \ref{Prop Betti non decreasing} to give a case in which a finite-length module cannot have infinitely many syzygies of finite length.

\begin{proposition} \label{dimsyz} Let $(R,\m,K)$ be a local ring of dimension $d$ and let $M$ be a finite length $R$-module. Let $i \gs 1$ and let $\Omega_i$ be the $i$-th syzygy of $M$. Then either $\dim(\Omega_i) = d$ or $\Omega_i$ has finite length.
\end{proposition}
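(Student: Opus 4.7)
The plan is to exploit the fact that, since $M$ has finite length, the minimal free resolution of $M$ becomes split exact after localizing at any non-maximal prime of $R$. This will force $(\Omega_i)_\p$ to be either identically zero or free of a positive constant rank as $\p$ varies over $\Spec(R) \setminus \{\m\}$, from which the two cases of the proposition fall out immediately.

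If $d = 0$ the statement is trivial, since every finitely generated $R$-module has finite length. Assume now $d \gs 1$, and let $\p \in \Spec(R) \setminus \{\m\}$. Because $M_\p = 0$, the truncated exact sequence
$$
0 \to (\Omega_i)_\p \to (G_{i-1})_\p \to (G_{i-2})_\p \to \cdots \to (G_0)_\p \to 0
$$
is a finite exact complex of finitely generated free $R_\p$-modules, hence split exact. Therefore $(\Omega_i)_\p$ is a free $R_\p$-module of rank
$$
r_i \;=\; \sum_{j=0}^{i-1}(-1)^{i-1-j}\beta_j(M),
$$
a non-negative integer that depends only on $i$ and not on $\p$.

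From here I would split into two cases. If $r_i = 0$, then $(\Omega_i)_\p = 0$ for every non-maximal $\p$, so $\Supp(\Omega_i) \subseteq \{\m\}$ and $\Omega_i$ has finite length (possibly zero). If instead $r_i \gs 1$, then $(\Omega_i)_\p \ne 0$ for every $\p \ne \m$; in particular $\MIN(R) \subseteq \Supp(\Omega_i)$, so $\Ann(\Omega_i) \subseteq \bigcap_{\p \in \MIN(R)} \p = \sqrt{0}$, and hence $\dim(\Omega_i) = \dim(R/\sqrt{0}) = d$.

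I do not anticipate a real obstacle. The one point worth highlighting is that the rank $r_i$ must be independent of $\p$; this is forced by the Euler characteristic identity for the split exact complex above. Everything else follows from split exactness of the localized resolution at non-maximal primes, which is a direct consequence of $M$ having finite length.
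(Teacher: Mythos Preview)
Your proof is correct and rests on the same core observation as the paper: for any $\p \ne \m$ the localization $M_\p = 0$ makes the truncated resolution split exact, so $(\Omega_i)_\p$ is free over $R_\p$. The paper packages this as a contradiction argument (assume $0 < \dim(\Omega_i) < d$, pick $\p \in \MIN(\ann(\Omega_i)) \smallsetminus (\{\m\} \cup \MIN(R))$, and observe that a nonzero free $R_\p$-module cannot have finite length when $\dim(R_\p) > 0$), whereas you argue directly by computing the constant rank $r_i$ and splitting into the cases $r_i = 0$ and $r_i \gs 1$; this is a cosmetic difference, and your version has the mild bonus of making the rank formula explicit.
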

\begin{proof}
By way of contradiction, we suppose $\dim(\Omega_i) = k$ with $0 < k < d$. 
Let $G_\bullet \to M \to 0$ be a minimal free resolution of $M$. 
By our assumption on $\dim(\Omega_i),$ we can choose $\p \in \MIN(\ann(\Omega_i)) \smallsetminus \left(\{\m\} \cup \MIN(R)\right)$ and localize $G_\bullet$ at $\p$. The resulting complex is split exact, because $M_\p = 0$. In particular, $(\Omega_i)_\p$ is a free $R_\p$-module. By our choice of $\p$, we have that $(\Omega_i)_\p$ has finite length, and $\dim(R_\p) > 0$, a contradiction.
\end{proof}

\begin{proposition} \label{one-dim} 
Let $(R,\m,K)$ be a local ring of positive dimension. Suppose there exists an $R$-module $M$ of infinite projective dimension and finite length which has a finite length syzygy $\Omega_{i+1}$, for some fixed $i > 0$. If $\beta_i(M) \gs \beta_{i-1}(M)$, then $\Omega_{i-1}$ has finite length as well and $R$ is one-dimensional.
\end{proposition}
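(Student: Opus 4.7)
The plan is to exploit the finite-length hypothesis on $\Omega_{i+1}$ by localizing the minimal free resolution of $M$ at minimal primes of $R$, and then to use a determinant argument to pin down $\dim R$. Let $(G_\bullet,\varphi_\bullet)$ denote this resolution, so $\Omega_j = \IM(\varphi_j) = \Ker(\varphi_{j-1})$ for $j \gs 1$. For any minimal prime $\p$ of $R$ we have $(\Omega_{i+1})_\p = 0$, and $M_\p = 0$ as well since $\p \ne \m$ (using $\dim R > 0$); therefore the localized truncation
\[
0 \to (G_i)_\p \xrightarrow{(\varphi_i)_\p} (G_{i-1})_\p \to \cdots \to (G_0)_\p \to 0
\]
is split exact over the Artinian local ring $R_\p$. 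In particular $(\varphi_i)_\p$ embeds $R_\p^{\beta_i}$ into $R_\p^{\beta_{i-1}}$, and comparing lengths over $R_\p$ gives $\beta_i \ls \beta_{i-1}$; together with the hypothesis this forces $\beta_i = \beta_{i-1}$ and $(\varphi_i)_\p$ to be an isomorphism.

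Since $(\CoKer \varphi_i)_\p = (\Omega_{i-1})_\p = 0$ at every minimal $\p$, we have $\dim(\Omega_{i-1}) < d$, so by Proposition \ref{dimsyz} (or, if $i=1$, by the tautology $\Omega_0 = M$), $\Omega_{i-1}$ has finite length. For the dimension statement, view $\varphi_i : G_i \to G_{i-1}$ as a square $n \times n$ map with $n := \beta_i = \beta_{i-1}$, whose zeroth Fitting ideal is the principal ideal $(\det \varphi_i)$. Because $\pd_R(M) = \infty$, $\Omega_{i-1} \ne 0$ (otherwise $\pd_R(M) \ls i-2$), so $\Supp(\Omega_{i-1}) = \{\m\}$; since the support of a finitely generated module equals the vanishing locus of its zeroth Fitting ideal, $(\det \varphi_i)$ is $\m$-primary. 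Meanwhile, $\det \varphi_i \notin \p$ for every minimal prime $\p$, because $(\varphi_i)_\p$ is an isomorphism there. Hence $\det \varphi_i$ is a single element lying outside every minimal prime whose principal ideal is $\m$-primary, and Krull's principal ideal theorem yields $\dim R = \dim R/(\det \varphi_i) + 1 = 1$.

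The main trick is to notice that the hypothesis $\beta_i \gs \beta_{i-1}$ forces $\beta_i = \beta_{i-1}$ and hence makes $\varphi_i$ a \emph{square} matrix, after which the determinant/Fitting-ideal argument runs directly; I expect this to be the only non-routine step. The subtlety to watch is excluding $\Omega_{i-1} = 0$ before asserting $\Supp(\Omega_{i-1}) = \{\m\}$, which is precisely where the infinite projective dimension hypothesis enters.
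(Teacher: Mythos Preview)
Your proof is correct, and the first half (localizing the resolution to force $\beta_i=\beta_{i-1}$ and to kill $(\Omega_{i-1})_\p$) mirrors the paper's argument; the paper simply localizes at \emph{every} $\p\ne\m$ and concludes $\lambda(\Omega_{i-1})<\infty$ directly, whereas you localize only at minimal primes and then invoke Proposition~\ref{dimsyz}, which amounts to the same thing.

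The genuine difference is in the last step. The paper observes that the two-term complex $0\to R^\beta\xrightarrow{\varphi_i}R^\beta\to 0$ has finite-length homology and is not exact, and then appeals to the New Intersection Theorem \cite{RobertsNIT} to conclude $\dim R\ls 1$. Your argument instead exploits the fact that $\varphi_i$ is a \emph{square} map: the zeroth Fitting ideal of $\Omega_{i-1}=\CoKer(\varphi_i)$ is the principal ideal $(\det\varphi_i)$, its radical is $\m$ because $\Supp(\Omega_{i-1})=\{\m\}$, and Krull's principal ideal theorem then forces $\operatorname{ht}(\m)\ls 1$. This is strictly more elementary---you avoid the New Intersection Theorem entirely---and it is available precisely because the hypothesis $\beta_i\gs\beta_{i-1}$ has already been upgraded to equality. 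The paper's route, by contrast, would handle more general finite-length two-term complexes (not necessarily square), but that extra generality is not needed here.
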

\begin{proof} 
Let $(G_\bullet,\varphi_\bullet)$ be a minimal free resolution of $M$:
\[
\xymatrixcolsep{5mm}
\xymatrixrowsep{2mm}
\xymatrix{
G_\bullet: G_{i+1} \ar@{->>}[dr] \ar[rr]^-{\varphi_{i+1}} &&  R^{\beta_i(M)} \ar@{->>}[dr] \ar[rr]^-{\varphi_i} &&  R^{\beta_{i-1}(M)} \ar@{->>}[dr] \ar[rr]^-{\varphi_{i-1}} && G_{i-2} \ar[r] &\ldots  \ar[r]& G_0 \ar[r] & M \ar[r] & 0. \\
& \Omega_{i+1} \ar@{^{(}->}[ru]  && \Omega_{i} \ar@{^{(}->}[ru]   && \Omega_{i-1} \ar@{^{(}->}[ru]
}
\]
Let $\p \in \Spec(R) \smallsetminus\{\m\}$. We localize $G_\bullet$ at $\p$. Since both $M$ and $\Omega_{i+1}$ have finite length, we have a split exact sequence
\[
\xymatrixcolsep{5mm}
\xymatrixrowsep{2mm}
\xymatrix{
G_\bullet: 0\ar[rr] &&  R^{\beta_i(M)}_\p \ar[dr]_-{\cong}   \ar[rr] &&  R^{\beta_{i-1}(M)}_\p \ar@{->>}[dr]  \ar[rr] && (G_{i-2})_\p \ar[r] &\ldots  \ar[r]& (G_0)_\p \ar[rr] &&  0\\
& && (\Omega_{i})_\p \ar@{^{(}->}[ru] && (\Omega_{i-1})_\p.
}
\]
In particular, this implies $\beta_i(M) \ls \beta_{i-1}(M)$. Since the opposite inequality holds  by or assumption, we have  equality. 
Set $\beta = \beta_i(M) = \beta_{i-1}(M)$. 
From the split exact sequence above, we also get that $R_\p^\beta \cong (\Omega_i)_\p$; therefore, $(\Omega_{i-1})_\p = 0$. 
Since $\p$ is an arbitrary prime in $\Spec(R)\smallsetminus\{\m\}$,
we have that $\Omega_{i-1}$ has finite length. 
Thus, we have a free complex $0 \to F_1=R^\beta \to F_0=R^\beta \to 0$ with finite length homology.  We conclude that $R$ has dimension one by the New Intersection Theorem \cite{RobertsNIT}.
\end{proof}

\begin{remark} \label{Rmk Betti numbers} 
If in Proposition \ref{one-dim} one assumes that the sequence of Betti numbers $\{\beta_i(M)\}$ is non-decreasing, then one can repeat the argument above to show that $i$ is necessarily odd, and $\beta_i(M) = \beta_{i-1}(M)$, $\beta_{i-2}(M) = \beta_{i-3}(M)$, $\ldots$, $\beta_1(M) = \beta_0(M)$. In addition, 
 $\Omega_{j}(M)$ has finite length for all $j$ even, $0 \ls j \ls i+1$. In particular, the typical situation to study would be $(R,\m,K)$ a one-dimensional ring and a resolution
\[
\xymatrixcolsep{5mm}
\xymatrixrowsep{2mm}
\xymatrix{
0 \ar[r] &  \Omega_4 \ar[r] & R^\beta \ar[r] &  R^\beta \ar[dr] \ar[rr] && R^\alpha \ar[r] & R^\alpha \ar[r] & M \ar[r] & 0\\
&&&&\Omega_2 \ar[ru]
}
\]
with $\Omega_4$ and $\Omega_2$ of finite length.
\end{remark}

As a consequence of these results, we give a partial answer to Question \ref{Q SyzFinLen} in the case where $M$ has eventually non-decreasing Betti numbers.
It is a conjecture of Avramov that every finitely generated module over a local ring has eventually non-decreasing Betti numbers \cite{AvramovBetti}. The conjecture is know to be true in several cases \cite{EisenbudBettiHyp,Lescot,Choi,Sun1,CIdim,Sun2}, in particular, for Golod rings \cite[Corollaire 6.5]{Lescot}.

\begin{proposition} \label{Prop Betti non decreasing} 
Let $(R,\m,K)$ be a local ring and let $M$ be a finite length $R$-module of infinite projective dimension with eventually non-decreasing Betti numbers. Then, for all $i \gg 0$, there exists $\p \in \MIN(R)$ such that $\dim(\Omega_i) = \dim(R/\p)$. In particular, $M$ cannot have arbitrarily high syzygies of finite length.
\end{proposition}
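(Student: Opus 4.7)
The plan is to combine Propositions~\ref{dimsyz} and \ref{one-dim} to force $\Omega_i$ not to be of finite length for $i \gg 0$. By Proposition~\ref{dimsyz}, each $\Omega_i$ satisfies $\dim(\Omega_i) \in \{0, d\}$ for $d = \dim R$. When $\dim(\Omega_i) = d$, any $\p \in \MIN(R)$ with $\dim(R/\p) = d$ (which always exists) verifies the desired equality. If $\Omega_i$ has finite length and is nonzero, the required $\p \in \MIN(R)$ with $\dim(R/\p) = 0$ exists only when $d = 0$, in which case $\m \in \MIN(R)$ and the whole statement is automatic. Hence the problem reduces to showing that, when $d \gs 1$, no $\Omega_i$ can have finite length for $i \gg 0$, which is exactly the ``in particular'' assertion.

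Suppose, toward a contradiction, that $\Omega_{i_n}$ has finite length for some increasing sequence $i_n \to \infty$. Choose $N$ such that $\beta_j(M)$ is non-decreasing for $j \gs N$. For $n$ large enough that $i_n - 1 \gs N$, we have $\beta_{i_n-1}(M) \gs \beta_{i_n-2}(M)$, so Proposition~\ref{one-dim} (applied with its index ``$i+1$'' equal to our $i_n$) yields that $R$ is one-dimensional and that $\Omega_{i_n - 2}$ also has finite length. This resolves the case $d \gs 2$ at once. In the remaining case $d = 1$, I iterate Proposition~\ref{one-dim} downward from each $i_n$, staying in the non-decreasing regime, to obtain that $\Omega_j$ is of finite length for every even $j$ in an arbitrarily long range; combined with Remark~\ref{Rmk Betti numbers}, this produces the equalities $\beta_{2k+1}(M) = \beta_{2k}(M)$ for all $k \gg 0$.

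For the one-dimensional case, the strategy is to invoke Lemma~\ref{lemmah0}: each finite-length syzygy $\Omega_{i_n}$ provides $\Tor_{i_n-1}^R(M, R/H) = 0$, where $H := H^0_\m(R)$. Since $\Tor_j^R(M,R)=0$ for every $j \gs 1$, the long exact sequence attached to $0 \to H \to R \to R/H \to 0$ gives isomorphisms $\Tor_j^R(M, R/H) \cong \Tor_{j-1}^R(M, H)$ for $j \gs 2$, and hence $\Tor_j^R(M, H) = 0$ for infinitely many $j$. Taking a composition series of the finite length module $H$ whose factors are isomorphic to $K$, the plan is to cascade this vanishing through the resulting long exact sequences in order to extract $\Tor_j^R(M, K) = 0$ for some $j \gg 0$, contradicting $\pd_R(M) = \infty$ via the Auslander--Buchsbaum--Serre criterion.

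The main obstacle is the last step in the one-dimensional case: propagating the vanishing of Tor against $H$ to vanishing against $K$. The known vanishing sits only at indices of one parity, while the filtration of $H$ mixes parities via connecting homomorphisms. The approach I have in mind mirrors the proof of Proposition~\ref{Prop Buchsbaum}, but must accommodate the complication that $\m H$ need not vanish. The detailed parity information derived in the previous paragraph, together with the fact that $R/H$ is one-dimensional Cohen--Macaulay, should provide enough structure to carry out the cascade.
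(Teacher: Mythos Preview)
Your reduction to the one-dimensional case via Propositions~\ref{dimsyz} and \ref{one-dim}, together with the parity analysis from Remark~\ref{Rmk Betti numbers}, matches the paper's argument. The gap is exactly where you flag it: the ``cascade'' through a composition series of $H=H^0_\m(R)$ does not go through. Knowing $\Tor_j^R(M,H)=0$ only at indices of a single parity is not enough to force $\Tor_j^R(M,K)=0$; passing through a short exact sequence $0\to H'\to H\to K\to 0$ produces
\[
\Tor_j^R(M,H)\longrightarrow \Tor_j^R(M,K)\longrightarrow \Tor_{j-1}^R(M,H'),
\]
and the right-hand term sits at the opposite parity, where you have no control. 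Iterating only pushes the problem to smaller submodules at alternating parities. The comparison with Proposition~\ref{Prop Buchsbaum} is misleading: that argument works precisely because $\m H=0$ collapses the filtration to a single step, which is what fails here. Invoking that $R/H$ is one-dimensional Cohen--Macaulay does not help either; it tells you the maps $\ov\varphi_{2k-1}:\ov R^{\beta}\to\ov R^{\beta}$ are injective with finite-length cokernels, but those cokernels are exactly the nonvanishing $\Tor^R_{2k-2}(M,R/H)$ at the wrong parity.

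The paper avoids this obstacle entirely with a multiplicity trick. After reducing to $d=1$ and arranging (by replacing $M$ with a high syzygy) that every $\Omega_{2i}$ has finite length and $\beta_{2i}=\beta_{2i+1}=:\beta$, one looks at
\[
0\longrightarrow \Omega_{2i+2}\longrightarrow R^{\beta}\xrightarrow{\ \varphi\ } R^{\beta}\longrightarrow \Omega_{2i}\longrightarrow 0.
\]
Viewing $R^{\beta}$ as a module over $S=R[\varphi]$, one has $\lambda(\Omega_{2i})-\lambda(\Omega_{2i+2})=e(\varphi;R^{\beta})>0$ \cite[Proposition~11.1.9(2)]{SwansonHuneke}. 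Thus $\{\lambda(\Omega_{2i})\}$ is a strictly decreasing sequence of non-negative integers, a contradiction. This is the missing idea; your homological route does not seem to reach it.
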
 
\begin{proof} If $\Supp(\Omega_i) \cap \MIN(R) \ne \emptyset$ for all $i \gg 0$ then we are done. By way of contradiction, assume that there exist infinitely many syzygies $\Omega_i$ of $M$ such that $\Supp(\Omega_i) \cap \MIN(R) = \emptyset$. Notice that, by Proposition \ref{dimsyz}, such syzygies must have finite length. By replacing $M$ with a high enough syzygy, we can then assume that $M$ is a module of finite length with non-decreasing Betti numbers, and with infinitely many syzygies of finite length. 
We have that $R$ is one-dimensional by  Proposition \ref{one-dim}. Furthermore, by Remark \ref{Rmk Betti numbers}, we have $\beta_{2i} = \beta_{2i+1}$ for all $i \gs 0$. For $i \gs 0$ consider the short exact sequence
\[
\xymatrixcolsep{5mm}
\xymatrixrowsep{2mm}
\xymatrix{
0 \ar[r] &  \Omega_{2i+2} \ar[r] & R^{\beta} \ar[r]^-{\varphi} &  R^{\beta} \ar[r] & \Omega_{2i} \ar[r] & 0,
}
\]
where $\beta:=\beta_{2i} = \beta_{2i+1}$. Let $S:=R[\varphi]$. Then $R^\beta$ becomes an $S$-module. The exact sequence above shows that $\Omega_{2i} \cong R^\beta \otimes_S S/(\varphi)$ and $\Omega_{2i+2} \cong (0:_{R^\beta} \varphi)$. Then, by \cite[Proposition 11.1.9 (2)]{SwansonHuneke},
\[
\ds \lambda(\Omega_{2i}) - \lambda(\Omega_{2i+2}) = e(\varphi;R^\beta), 
\]
where $e(\varphi;-)$ denotes the Hilbert-Samuel multiplicity with respect to the ideal $(\varphi)$ in $S$. 
Since such multiplicity is always positive, we have that $\lambda(\Omega_{2i+2})<\lambda(\Omega_{2i})$, for all $i\gs 0$. Since there cannot be an infinite strictly decreasing sequence of such lengths, we obtain a contradiction.
\end{proof}

\begin{remark} 
Proposition \ref{Prop Betti non decreasing} also follows from \cite[Theorem 8]{BeckLeamer}, and it gives another proof of the fact that, when $M$ is a module of finite length with eventually non-decreasing Betti numbers and $R$ is equidimensional, then the sequence of integers $\{\dim(\Omega_i)\}_{i=0}^\infty$ is constant for $i \gg 0$ (see \cite[Corollary 2]{BeckLeamer}).
\end{remark}

\begin{proposition} \label{xi} 
Let $(R,\m,K)$ be a one-dimensional local ring. Suppose that there exists a finite length module $M$ of infinite projective dimension that has a finite length syzygy $\Omega_{i+1}$, for some fixed $i \gs 2$. Then
\[
\lambda(\Omega_{i+1}) = \sum_{j = 0}^{i} (-1)^{i-j+1} \lambda(\Tor_j^R(M,R/(x))),
\]
where $x$ is a suitable parameter.
\end{proposition}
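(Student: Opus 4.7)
The plan is to derive the formula from an Euler--Poincar\'e calculation on the truncated complex
\[
E_\bullet:\quad 0 \to \Omega_{i+1} \to G_i \to \cdots \to G_0 \to 0,
\]
whose only non-vanishing homology is $H_0(E_\bullet) \cong M$. The crucial preliminary observation is that
\[
\chi_i(M) := \sum_{j=0}^{i} (-1)^j \beta_j(M) \;=\; 0.
\]
To see this, I localize $E_\bullet$ at any minimal prime $\p \in \Spec(R)$: since $M$ and $\Omega_{i+1}$ both have finite length, they vanish at $\p$, and $E_\bullet \otimes_R R_\p$ becomes an exact complex $0 \to R_\p^{\beta_i} \to \cdots \to R_\p^{\beta_0} \to 0$ of finitely generated free $R_\p$-modules. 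Over a local ring, the alternating sum of ranks of such an exact complex is forced to vanish (by iteratively splitting off free summands), which yields $\chi_i(M) = 0$.

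Next I would select a parameter $x$ so that both $x\Omega_{i+1} = 0$ and $\Omega_{i+1} \cap xG_i = 0$. Since $\Omega_{i+1}$ has finite length it is annihilated by some power $\m^N$, and since $\Omega_{i+1} \subseteq G_i$, the Artin--Rees lemma provides a constant $c$ with $\Omega_{i+1} \cap \m^n G_i \subseteq \m^{n-c}\Omega_{i+1}$ for all $n \geq c$. Hence any parameter $x \in \m^n$ for $n$ large enough (for instance $x = y^n$ where $y$ is a parameter of $R$) meets both conditions simultaneously.

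Finally I would apply the Euler--Poincar\'e identity to the bounded complex $E_\bullet \otimes_R R/(x)$, all of whose terms have finite length. On the left-hand side, the term in position $i+1$ contributes $\lambda(\Omega_{i+1})$ thanks to $x\Omega_{i+1} = 0$, while the alternating sum over the free-module terms equals $\lambda(R/(x)) \cdot \chi_i(M) = 0$; the left-hand side thus collapses to $(-1)^{i+1}\lambda(\Omega_{i+1})$. On the right-hand side, I would splice a free resolution $F_\bullet \twoheadrightarrow \Omega_{i+1}$ on top of $E_\bullet$ to obtain a genuine free resolution of $M$: the surjectivity of $F_0 \twoheadrightarrow \Omega_{i+1}$ guarantees that the images of $F_0/xF_0$ and $\Omega_{i+1}/x\Omega_{i+1}$ inside $G_i/xG_i$ coincide, so $H_k(E_\bullet \otimes_R R/(x)) \cong \Tor_k^R(M, R/(x))$ for $0 \leq k \leq i$. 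Moreover $H_{i+1}(E_\bullet \otimes_R R/(x)) = \Omega_{i+1} \cap xG_i = 0$ by our choice of $x$. Equating the two sides and reindexing signs then yields the stated formula. The main subtleties are the splicing argument that identifies $H_i(E_\bullet \otimes_R R/(x))$ with $\Tor_i^R(M, R/(x))$ (since $\Omega_{i+1}$ is not free), and the Artin--Rees step required to force the boundary term $\Omega_{i+1} \cap xG_i$ to vanish.
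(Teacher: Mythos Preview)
Your argument is correct and complete; the Euler--Poincar\'e count on the truncated complex $E_\bullet \otimes_R R/(x)$, together with your Artin--Rees choice of $x$ to kill the boundary term $\Omega_{i+1}\cap xG_i$, delivers the formula directly. The splicing observation that the images of $F_0/xF_0$ and $\Omega_{i+1}/x\Omega_{i+1}$ in $G_i/xG_i$ agree is exactly what is needed to identify $H_i$ with $\Tor_i^R(M,R/(x))$, and you have handled it correctly.

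The paper proceeds quite differently. Rather than a single Euler characteristic, it breaks the resolution into short exact sequences $0\to\Omega_j\to G_{j-1}\to\Omega_{j-1}\to 0$, applies $H^0_\m(-)$ to each, and tracks the resulting length relations; the error terms that appear are $\lambda(\Tor_1^R(\Omega_j,H^1_\m(R)))$. It then chooses $x$ so that $H^0_\m(R)=0:_R x$ and $xM=0$, and uses a flat resolution of $H^1_\m(R)$ built from $0\to H^0_\m(R)\to R\to R_x\to H^1_\m(R)\to 0$ to identify $\Tor_1^R(\Omega_j,H^1_\m(R))\cong\Tor_1^R(\Omega_j,R/(x))\cong\Tor_{j+1}^R(M,R/(x))$. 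Both proofs share the preliminary step $\sum_{j=0}^i(-1)^j\beta_j(M)=0$ via localization at a minimal prime, but yours avoids local cohomology entirely and is considerably more elementary. The paper's route, on the other hand, makes visible the intermediate quantities $\lambda(H^0_\m(\Omega_j))$, which may be of independent interest; note also that the two proofs produce the formula for \emph{different} suitable parameters $x$, which is consistent with the statement.
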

\begin{proof}
Consider a minimal free resolution of $M$:
\[
\xymatrixcolsep{5mm}
\xymatrixrowsep{2mm}
\xymatrix{
G_\bullet: & \ldots \ar[r] & G_{i+1} \ar[dr] \ar[rr] && G_{i} \ar[dr] \ar[rr] && G_{i-1}\ar[r]&\ldots \ar[r] & G_1 \ar[dr] \ar[rr] && G_0 \ar[r] & M \ar[r] & 0. \\
&&& \Omega_{i+1} \ar[ru]&& \Omega_{i} \ar[ru]& &&& \Omega_1\ar[ru]
}
\]
For all $j = 1,\ldots,i+1$, we  break it into short exact sequences:
\[
\xymatrixcolsep{5mm}
\xymatrixrowsep{2mm}
\xymatrix{
0 \ar[rr] && \Omega_j \ar[rr] && G_{j-1} \ar[rr] && \Omega_{j-1} \ar[rr] && 0,
}
\]
where $\Omega_0:= M$. These give two exact sequences:
\[
\xymatrixcolsep{5mm}
\xymatrixrowsep{2mm}
\xymatrix{
0 \ar[rr] && \Omega_{i+1} \ar[rr] && H^0_\m(G_{i}) \ar[rr] && H^0_\m(\Omega_{i})\ar[rr] && 0
}
\]
and
\[
\xymatrixcolsep{5mm}
\xymatrixrowsep{2mm}
\xymatrix{
0 \ar[rr] && H^0_\m(\Omega_j) \ar[rr] && H^0_\m(G_{j-1}) \ar[rr] && H^0_\m(\Omega_{j-1}).
}
\]
The first short exact sequence comes from the fact that $\Omega_{i+1}$ has finite length, and so, $H^1_\m(\Omega_{i+1}) = 0$. Furthermore, the cokernel of the rightmost map in the second exact sequence, which can be proved to be the kernel of the leftmost map in
\[
\xymatrixcolsep{5mm}
\xymatrixrowsep{2mm}
\xymatrix{
\Omega_j \otimes_R H^1_\m(R) \ar[rr] && G_{j-1} \otimes_R H^1_\m(R) \ar[rr] && \Omega_{j-1} \otimes_R H^1_\m(R) \ar[rr] && 0
}
\]
is then $\Tor_1^R(\Omega_{j-1},H^1_\m(R))$. For simplicity, we denote $\omega_j := \lambda(H^0_\m(\Omega_j))$, $g_j:= \lambda(H^0_\m(G_j))$ and $\alpha_j:= \lambda(\Tor_1^R(\Omega_j,H^1(R)))$. Then, we have relations
\begin{eqnarray*}
\begin{array}{ll}
\omega_{i+1} &= g_{i} - \omega_{i} \\
\omega_{i} &= g_{i-1} - \omega_{i-1} + \alpha_{i-1} \\
&\vdots \\
\omega_2 &= g_1 - \omega_1 + \alpha_1 \\
\omega_1 &= g_0 - \lambda(M) + \lambda(\Tor_1(M,H^1_\m(R))).
\end{array}
\end{eqnarray*}

After localizing the  resolution $G_\bullet$ at any minimal prime $\p$, since $(\Omega_{i+1})_\p = 0$, we obtain that  $\sum_{j=0}^{i} (-1)^j \beta_j(M) = 0$.
Then, $\sum_{j=0}^{i} (-1)^j g_j = 0$ because $g_j=\beta_j(M)\cdot \lambda(H^0_m(R)).$
Therefore,
\[
\omega_{i+1} = \lambda(\Omega_{i+1}) = \sum_{j=1}^{i-1} (-1)^{i-j} \alpha_j + (-1)^{i} \lambda(\Tor_1(M,H^1(R))) +  (-1)^{i-1} \lambda(M).
\]

Choose a parameter $x$ such that $H^0_\m(R) = 0:_Rx$, as in Remark \ref{RemParameter}. By similar considerations we can also assume that $xM=0$. From this choice, we have that  $xH^0_\m(\Omega_j) = 0$ for all $j = 0,\ldots,i+1,$ because $\Omega_j \subseteq G_{j-1}$ is a free $R$-module. 
Since the Tor modules can be computed using flat resolutions,
we have an exact sequence
\[
\xymatrixcolsep{5mm}
\xymatrixrowsep{2mm}
\xymatrix{
0 \ar[r] & H^0_\m(R) \ar[r] & R \ar[r] & R_x \ar[r] & H^1_\m(R) \ar[r] & 0.
}
\]
We complete on the left to get a flat resolution of $H^1_\m(R)$:
\[
\xymatrixcolsep{5mm}
\xymatrixrowsep{2mm}
\xymatrix{
\ldots \ar[r] & R^{\mu(H^0_\m(R))} \ar[rr] && R \ar[rd]\ar[rr]&& R_x \ar[r] & H^1_\m(R) \ar[r] & 0 \\
&&&&R/H^0_\m(R) \ar[ru]
}
\]
By our  choice of $x$, we have that a free resolution of $R/x$ starts as
\[
\xymatrixcolsep{5mm}
\xymatrixrowsep{2mm}
\xymatrix{
\ldots \ar[r] & R^{\mu(H^0_\m(R))} \ar[r] & R \ar[r] & R \ar[r] & R/(x) \ar[r] & 0.
}
\]
For all $j = 1,\ldots, i-1$, we obtain
\[
\Tor_1^R(\Omega_{j},H^1_\m(R)) \cong \Tor_1^R(\Omega_{j},R/(x)) \cong \Tor_{j+1}^R(M,R/(x)),
\]
where the last isomorphism comes from dimension shifting. In addition,
\[
\Tor_1^R(M,H^1_\m(R)) \cong \Tor_1^R(M,R/(x)).
\]
Finally, since $x H^0_\m(\Omega_0) = xM = 0$, we get
\[
M \cong M/xM \cong \Tor_0^R(M,R/(x)),
\]
and the proposition then follows.
\end{proof}

\begin{corollary}\label{Cor 1-3}
Let $(R,\m,K)$ be a one-dimensional  ring, and let $M$ be a finite length module of infinite projective dimension. Then $\lambda(\Omega_1) = \lambda(\Omega_3) = \infty$.
\end{corollary}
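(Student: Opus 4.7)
Since $\pd_R(M) = \infty$, every syzygy of $M$ is nonzero, so it suffices to rule out $\Omega_1$ and $\Omega_3$ each having positive finite length. The two cases will be handled separately. For $\Omega_1$, the plan is to apply the long exact sequence of local cohomology to $0 \to \Omega_1 \to G_0 \to M \to 0$: since both $\Omega_1$ and $M$ have finite length, $H^i_\m(\Omega_1) = H^i_\m(M) = 0$ for $i \gs 1$ while $H^0_\m$ returns the module itself, and the long exact sequence forces $H^1_\m(G_0) = H^1_\m(R)^{\beta_0(M)} = 0$. Because $M \ne 0$ gives $\beta_0(M) \gs 1$, we would obtain $H^1_\m(R) = 0$, contradicting Grothendieck's non-vanishing theorem for the one-dimensional ring $R$.

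For $\Omega_3$ of finite length, I would invoke Proposition \ref{xi} with $i = 2$, which gives
\[
\lambda(\Omega_3) = -\lambda(\Tor_0^R(M, R/(x))) + \lambda(\Tor_1^R(M, R/(x))) - \lambda(\Tor_2^R(M, R/(x))),
\]
where $x$ is a parameter satisfying $0:_R x = H^0_\m(R)$ and $xM = 0$. The short exact sequence $0 \to (x) \to R \to R/(x) \to 0$, together with the isomorphism $(x) \cong R/H^0_\m(R)$ given by $r \mapsto rx$, yields $\Tor_j^R(M, R/(x)) \cong \Tor_{j-1}^R(M, R/H^0_\m(R))$ for $j \gs 1$. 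Using $xM = 0$ one then identifies $\Tor_0 = M$, $\Tor_1 = M/H^0_\m(R) M$, and $\Tor_2 = \Tor_1^R(M, R/H^0_\m(R))$. Applying the Tor long exact sequence of $0 \to H^0_\m(R) \to R \to R/H^0_\m(R) \to 0$ produces $\lambda(\Tor_2^R(M, R/(x))) = \lambda(M \otimes_R H^0_\m(R)) - \lambda(H^0_\m(R) M)$. Substituting into the above formula, the terms involving $\lambda(M)$ and $\lambda(H^0_\m(R) M)$ cancel pairwise, and the expression collapses to
\[
\lambda(\Omega_3) = -\lambda(M \otimes_R H^0_\m(R)) \ls 0.
\]
Since $\lambda(\Omega_3) \gs 0$, this forces $\Omega_3 = 0$, hence $\pd_R(M) \ls 2$, contradicting the hypothesis.

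The main bookkeeping step will be the cancellation in the $\Omega_3$ case: rewriting each $\Tor_j^R(M, R/(x))$ in terms of the single sequence $0 \to H^0_\m(R) \to R \to R/H^0_\m(R) \to 0$ and checking that the alternating sum of lengths collapses to the lone nonpositive term $-\lambda(M \otimes_R H^0_\m(R))$. The $\Omega_1$ case is essentially immediate once one recognizes that Grothendieck non-vanishing is the right tool.
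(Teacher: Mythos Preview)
Your proposal is correct and follows essentially the same strategy as the paper: both arguments dispatch $\Omega_1$ by noting that a nonzero free module over a one-dimensional ring cannot sit between two finite-length modules (you phrase this via Grothendieck non-vanishing, the paper via the direct length observation), and both handle $\Omega_3$ by invoking Proposition~\ref{xi} with $i=2$ together with the identification $\Tor_1^R(M,R/(x)) \cong M/H^0_\m(R)M$ coming from $(x)\cong R/H^0_\m(R)$ and $xM=0$.

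The only genuine difference is in the final estimate for $\Omega_3$. The paper simply discards the term $-\lambda(\Tor_2^R(M,R/(x)))$, obtaining $\lambda(\Omega_3) \ls \lambda(M/H^0_\m(R)M) - \lambda(M) \ls 0$. You instead compute $\Tor_2^R(M,R/(x)) \cong \Tor_1^R(M,R/H^0_\m(R))$ explicitly via the sequence $0 \to H^0_\m(R) \to R \to R/H^0_\m(R) \to 0$ and obtain the sharper identity $\lambda(\Omega_3) = -\lambda(M \otimes_R H^0_\m(R))$. This extra step is not needed for the contradiction, but it does yield a cleaner closed-form expression and makes the cancellation completely transparent; the paper's version is shorter but leaves the $\Tor_2$ term as a throwaway.
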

\begin{proof} 
Note that $\lambda(\Omega_1) = \infty$ ; otherwise, we would have a short exact sequence
\[
\xymatrixcolsep{5mm}
\xymatrixrowsep{2mm}
\xymatrix{
0 \ar[r] &  \Omega_1 \ar[r]&  G_0  \ar[r]&  M \ar[r] & 0
}
\]
in which both $\Omega_1$ and $M$ have finite length. This cannot happen because $G_0 \ne 0$ is free and $\dim(R) = 1$. Now let us assume by way of contradiction that $\lambda(\Omega_3) < \infty$. Let $(G_\bullet,\varphi_\bullet)$ be a minimal free resolution of $M$:
\[
\xymatrixcolsep{5mm}
\xymatrixrowsep{2mm}
\xymatrix{
0 \ar[r] &  \Omega_3 \ar[rr]& &  G_2  \ar[rr]^-{\varphi_2} &&  G_1 \ar[rr]^-{\varphi_1} && G_0 \ar[rr] && M \ar[r] & 0
}
\]
Let $x \in R$ be a parameter such that $xM=x H^0_\m(R) = 0$. Consider the short exact sequence
\[
\xymatrixcolsep{5mm}
\xymatrixrowsep{2mm}
\xymatrix{
0 \ar[r] &  (x) \ar[r] & R \ar[r] &  R/(x) \ar[r] & 0.
}
\]
By our choice of $x$ we have $0:_R x = H^0_\m(R)$, hence $(x) \cong R/H^0_\m(R)$.  After tensoring the sequence with $M$, we obtain that
\[
\xymatrixcolsep{5mm}
\xymatrixrowsep{2mm}
\xymatrix{
0 \ar[r] &  \Tor_1^R(M,R/(x)) \ar[r] & M/H^0_\m(R)M \ar[r] & M \ar[r] &  M/xM \ar[r] & 0.
}
\]
Since $xM = 0$, we obtain 
\[
\ds \lambda(\Tor_1^R(M,R/(x))) = \lambda(M/H^0_\m(R)M).
\]
Then, by Proposition \ref{xi} we have
\begin{align*}
\lambda(\Omega_3) &=  -\lambda(\Tor_2^R(M,R/(x))) + \lambda(\Tor_1^R(M,R/(x))) - \lambda(M) \\
& \ls \lambda(\Tor_1^R(M,R/(x))) - \lambda(M) \\
& = \lambda(M/H^0_\m(R)M) - \lambda(M) \ls 0,
\end{align*}
which gives a contradiction since $\Omega_3 \ne 0$, because $M$ has infinite projective dimension.
\end{proof}
The following example is due to the second author, and it is taken from \cite{BeckLeamer}. It shows the assumption that $M$ has finite length is needed in Corollary \ref{Cor 1-3}.
\begin{example} \label{Ex 1-3 finite length} 
Let $S=\QQ[x, y, z, u, v]$ and let $I \subseteq S$ be the ideal
\[
\ds I = (x^2, xz, z^2, xu, zv, u^2, v^2, zu + xv + uv, yu, yv, yx - zu, yz - xv).
\]
Let $R = S/I$, which is a one-dimensional ring of depth $0$. In this case $y$ is a parameter, $0 :_R y = (u, v, z^2)$ and $(y) = 0 :_R (0 :_R y)$. Let $M$ be the cokernel of the rightmost map in the following exact complex
\[
\xymatrixcolsep{5mm}
\xymatrixrowsep{2mm}
\xymatrix{
\ldots \ar[rr] &&  R^3 \ar[rrr]^-{\left[u \ v \ z^2\right]} &&& R \ar[rr]^-y && R \ar[rrr]^-{\left[{\tiny \begin{array}{c} u \\ v \\ z^2 \end{array} }\right]} &&& R^3.
}
\]
Then $M$ is a one-dimensional module with first and third syzygies $\Omega_1 \cong R/(y)$ and $\Omega_3 \cong 0 :_R y$. These are both modules of finite length since $y$ is a parameter. 
\end{example}
\section*{Acknowledgments}
The second author thanks the National Science Foundation for support through Grant DMS-$1259142$.
The third author thanks the National Council of Science and Technology of Mexico (CONACYT) for support through Grant $\#207063$. We thank the referee for helpful comments and suggestions.

\bibliographystyle{alpha}
\bibliography{References}
\vspace{.25cm}
{\footnotesize

\noindent \small \textsc{Department of Mathematics, University of Virginia, Charlottesville, VA  22903} \\ \indent \emph{Email address}:  {\tt ad9fa@virginia.edu} 
\vspace{.25cm}

\noindent \small \textsc{Department of Mathematics, University of Virginia, Charlottesville, VA  22903} \\ 
\indent \emph{Email address}:  {\tt huneke@virginia.edu}  

\vspace{.25cm}
\noindent \small \textsc{Department of Mathematics, University of Virginia, Charlottesville, VA  22903} \\ 
\indent \emph{Email address}:  {\tt lcn8m@virginia.edu}  

}
\end{document}